\title[Tribracket brackets and quantum Enhancements]
{Quantum enhancement polynomials associated with the canonical two-element tricbracket}
\author{Yuya Koda}
\address{
Department of Mathematics, Hiyoshi Campus, Keio University, 4-1-1, Hiyoshi, Kohoku, Yokohama, 223-8521, Japan~ \slash ~ 
International Institute for Sustainability with Knotted Chiral Meta Matter (WPI-SKCM$^2$), Hiroshima University, 
1-3-1 Kagamiyama, Higashi-Hiroshima, Hiroshima 739-8526, Japan}
\email{koda@keio.jp}
\author{Yuya Nishimura}
\address{
Department of Mathematics, Hiroshima University, 1-3-1 Kagamiyama, Higashi-Hiroshima, 739-8526, Japan}
\author{Yuka Sakamoto}
\address{
Department of Mathematics, Hiroshima University, 1-3-1 Kagamiyama, Higashi-Hiroshima, 739-8526, Japan}
\thanks{
Y. K. is supported by JSPS KAKENHI Grant Numbers JP20K03588, 
JP21H00978, JP23H05437 and JP24K06744.
}
\theoremstyle{plain}
\newtheorem*{theorem*}{Theorem}
\newtheorem*{lemma*} {Lemma}
\newtheorem*{corollary*} {Corollary}
\newtheorem*{conjecture*}{Conjecture}
\newtheorem{theorem}{Theorem}[section]
\newtheorem{lemma}[theorem]{Lemma}
\newtheorem{corollary}[theorem]{Corollary}
\newtheorem{proposition}[theorem]{Proposition}
\newtheorem{conjecture}[theorem]{Conjecture}
\newtheorem{definition}[theorem]{Definition}
\newtheorem{claim}{Claim}
\theoremstyle{remark}
\newtheorem*{claim*}{Claim}
\newtheorem*{notation}{Notation}
\newtheorem{example}{Example}
\newtheorem*{remark}{Remark}
\theoremstyle{definition}
\newtheoremstyle{citing}% name
  {}%      Space above, empty = `usual value'
  {}%      Space below
  {\itshape}% Body font
  {}%         Indent amount (empty = no indent, \parindent = para indent)
  {\bfseries}% Thm head font
  {.}%        Punctuation after thm head
  {.5em}%     Space after thm head: " " = normal interword space;
\theoremstyle{citing}
\newcommand{\ZZ}{\mathbb{Z}}
\newcommand{\cl}{\operatorname{cl}}
\newcommand{\Zpoly}{\mathbb{Z}[x_1^{\pm1}, \ldots ,x_5^{\pm1}]}
\newcommand{\sign}{\mathrm{sign}}
\begin{document}

\maketitle

\begin{abstract}
Quantum enhancement polynomials are invariants for oriented links, 
defined in association with an algebraic structure called a tribracket. 
In this paper, we focus on the particular case of the canonical two-element tribracket. 
We prove that, in that case, the quantum enhancement polynomials can be recovered by five specific polynomials, 
which we refer to as the universal quantum enhancement polynomials. 
After presenting several notable properties of these polynomials, we show that 
they are strictly stronger than the Jones polynomial. 
Furthermore, we provide computational results for links with up to 10 crossings. 
\end{abstract}

\vspace{1em}

\begin{small}
\hspace{2em}  \textbf{2020 Mathematics Subject Classification}: 
57K10; 57K16, 57K12

%57K10 Knot theory 
%57K16 - Finite-type and quantum invariants, topological quantum field theories (TQFT)
%57K12 - Generalized knots (virtual knots, welded knots, quandles, etc.)

%32S55; 57K10, 57K32, 57Q60, 57R05

% 32S55 Milnor fibration; relation with knot theory
%57K10 Knot theory
% 57K32 Hyperbolic 3-manifolds
% 57Q60 Cobordism and concordance in PL-topology
% 57R05 Triangulating

\hspace{2em} 
\textbf{Keywords}: 
knots and links, tribrackets, counting invariants, quantum enhancements

%divide, link, hyperbolic 3-manifold, shadow.
%stable map, branched shadow, hyperbolic link.
\end{small}

\section*{Introduction}

The study of knots and links by coloring their diagrams with algebraic structures $X$, 
such as quandles and their variants, has a long-established history in knot theory and 
is gaining increasing importance in contemporary research.
In this context, colorings are defined as assignments of elements from $X$ 
to parts of the link diagrams (e.g., crossings, regions), with $X$ being structured 
so that the number of colorings remains invariant under the three Reidemeister moves. 
An invariant of links can then be derived by counting the number of colorings for a given link diagram. 
This type of invariant is known as a \emph{counting invariant}. The classical example of such an invariant is 
\emph{Fox tricoloring}, where 
$X$ is just the group $\ZZ / 3 \ZZ$. 

\emph{Enhancements} of coloring-counting invariants are obtained by assigning an additional "label" to each coloring, 
rather than simply counting the colorings as they are. 
Although the precise definition of these enhancements is omitted here, they provide more powerful invariants. 
For instance, \emph{quandle cocycle invariants} introduced by 
Carter, Jelsovsky, Kamada, Langford, and Saito \cite{CJKLS99} can be interpreted within this framework. 
For further details, see Elhamdadi and Nelson \cite{EN15}.

In \cite{Nie14}, Niebrzydowski introduced sets equipped with ternary operations, now known as \emph{Niebrzydowski tribrackets}, or simply \emph{tribrackets}. 
Let $X$ be a tribracket and $R$ a commutative ring. 
If two maps 
 $A$ and $B$ from $X^3$ to $R^\times$ satisfy certain conditions 
 (see Section \ref{sec: Tribracket colorings of link diagrams and their quantum enhancement polynomials} for details), 
 then the pair $(A, B)$ is called a \emph{tribracket bracket} with respect to $X$ and $R$. 
In \cite{ANR21}, 
extending the concept of biquandle brackets to tribrackets, Aggarwal, Nelson, and Rivera defined an invariant 
$\Phi_X^{(A, B)}$ of oriented links, known as the \emph{quantum enhancement polynomial} associated with 
$(X; A, B)$. 
This paper focuses on quantum enhancement polynomials in the specific case where 
$X = X_2 := \ZZ / 2 \ZZ$, with the ternary operation 
$[a , b , c] = a + b - c$.

Even when restricting to the tribracket $X_2$, 
there are infinitely many tribracket brackets associated with it, depending on the choice of the commutative ring 
$R$ and tribracket brackets with respect to $(X_2, R)$. 
However, we show that there are five \emph{universal tribracket brackets} 
$( A^{(i)} , B^{(i)} )$ ($i=1, \ldots, 5$) 
such that any tribracket bracket with respect to $X_2$ and any integral domain 
$R$ can be recovered from them (Theorem~\ref{thm:iff_cond}). 
Therefore, the properties of the quantum enhancement polynomials associated with any tribracket bracket for $X_2$ 
and any integral domain can be understood by examining the five polynomials $\Phi^{(A^{(i)}, B^{(i)})}_{X_2}$ ($i=1, \ldots, 5$).

Let $L$ be a link and $L_1, L_2 \subset L$ be (possibly empty) sublinks of $L$ such that 
$L = L_1 \cup L_2$ and $L_1 \cap L_2 = \emptyset$. 
The \emph{linking number} $\mathrm{lk} (L_1, L_2)$ of $L_1$ and $L_2$ is defined as the 
the sum of the signs of crossings between $L_1$ and $L_2$ in which the over arc belongs to $L_1$ in a diagram of $L$. 
For a link $L$, let $\text{LK}(L)$ denote the multiset $\{\text{lk}(L_1, L_2) \mid L = L_1 \cup L_2\}$. 
We show that the universal quantum enhancement polynomials $\Phi^{(A^{(i)}, B^{(i)})}_{X_2} $ satisfy the following properties: 

\begin{itemize}
\item
Let $L$ and $L'$ be links. 
If $\Phi^{(A^{(i)}, B^{(i)})}_{X_2} (L) = \Phi^{(A^{(i)}, B^{(i)})}_{X_2} (L')$ for some $i \in \{1, \ldots, 5 \}$, 
then $\text{LK}(L) = \text{LK}(L')$ (Corollary~\ref{cor:linking_num_multi}). 
\item
 Let  $L$ and $L'$ be links. 
 If $\Phi^{(A^{(5)}, B^{(5)})}_{X_2} (L) = \Phi^{(A^{(5)}, B^{(5)})}_{X_2} (L')$, then $J(L) = J(L')$, 
 where $J ( \cdot )$ denotes the Jones polynomial
(Theorem \ref{thm:jones_phi5}). 
Furthermore, the invariant 
$\Phi^{(A^{(5)}, B^{(5)})}_{X_2}$ is a strictly stronger than the Jones polynomial (Proposition \ref{prop_trivial_thistlethwaite}).
\item
Let $K$ and $K'$ be knots. 
Fix any $i \in \{1,2,3,4,5\}$. 
Then,  $\Phi^{(A^{(i)}, B^{(i)})}_{X_2}(K) = \Phi^{(A^{(i)}, B^{(i)})}_{X_2}(K')$ if and only if $J(K) = J(K')$
(Corollary \ref{cor: universal tribracket brackets invariants are euivalent to the Jones polynomial for knots}). 
\end{itemize}

We computed the universal quantum enhancement polynomials for the two or more component links with up to 10 crossings, 
as listed in LinkInfo (\cite{LM24}). 
Our findings include the following: 

\begin{itemize}
\item
For any $i \in \{1,2,3,4\}$, 
there exist links $ L$ and $L' $ such that
$\Phi^{(A^{(i)}, B^{(i)})}_{X_2}(L) \neq \Phi^{(A^{(i)}, B^{(i)})}_{X_2}(L')$, even though $(J(L), \text{LK}(L)) = (J(L'), \text{LK}(L'))$ 
(Proposition~\ref{prop:computer experiments for Phi and (LK, J)}). 
\end{itemize}

The paper is organized as follows: 
Section 1 reviews the basics of quantum enhancement polynomials. 
Section 2 provides examples of these polynomials' computations. 
Section 3 lists the fundamental properties of quantum enhancement polynomials, several of which will be essential for the following sections.  
Section 4 proves the existence of universal tribracket brackets for the canonical two-element tribracket. 
Section 5 explores the relationship between these universal tribracket brackets and other invariants, such as linking numbers and Jones polynomials. 
Finally, Section 6 presents computational results for the universal quantum enhancement polynomials and discusses potential conjectures based on these findings.
%\section{Preliminaries}

\section{Tribracket colorings of link diagrams and their quantum enhancement polynomials}
\label{sec: Tribracket colorings of link diagrams and their quantum enhancement polynomials}

In this section, we review the notion of tribracket colorings of link diagrams and their associated quantum enhancement polynomials. 
For more details on tribracket colorings of link diagrams, see \cite{NOO19, Nie14}, and for quantum enhancement polynomials, refer to \cite{ANR21, ANR24}.

\begin{definition}\label{def:tribracket}
  Let $X$ be a set. A ternary operation $[, ,] : X^3 \to X$ is called a \textit{$($horizontal$)$ tribracket} if it satisfies the 
  following conditions: 
    \begin{enumerate}
        \item For all $a,b,c \in X$ 
        \begin{enumerate}
            \item there exits a unique $x \in X$ such that $[a,b,x] = c$. 
            \item there exits a unique $x \in X$ such that $[a,x,b] = c$.
            \item there exits a unique $x \in X$ such that $[x,a,b] = c$.
        \end{enumerate}
        \item For all $a,b,c,d \in X$, we have:
        \[
            [c, [a,b,c], [a,c,d]] = [b,[a,b,c], [a,b,d]] = [d, [a,b,d], [a,c,d]]    
        \]
    \end{enumerate}
\end{definition}

A tribracket $(X, [, ,])$  is sometimes denoted simply by $X$ if there is no risk of confusion. 

\begin{remark}
In \cite{NOO19, Nie14}, the ternary operation $[, , ]$ satisfying the conditions above itself is called a (horizontal) tribracket, 
and the pair $(X,\ [, ,])$ is referred to as a knot-theoretic horizontal ternary quasigroup, or a Niebrzydowski tribracket. 
For simplicity, we call it a tribracket here. 
\end{remark}

\begin{example}\label{ex:jones}
   Let  $X = \ZZ / n \ZZ$, and define a ternary operation $[, ,] : X^3 \to X$ by $[a,b,c] = a+b-c$. 
Then $(X, [, ,])$ is a tribracket. 
We call this the \textit{canonical tribracket} with $n$ elements and denote it by $X_n$.
\end{example}

In this paper, all links are assumed to be oriented unless otherwise specified. 
Let $D_L \subset \mathbb{R}^2$ be a diagram of a link $L$. 
Each component of $\mathbb{R}^2 - D_L$ is referred to as a \textit{region} of $D_L$. 
We denote by $\mathcal{C}(D_L)$ and $\mathcal{R}(D_L)$ the set of crossings and the set of regions of $D_L$, respectively.

\begin{definition}
Let $D_L$ be a diagram of a link $L$, and let $(X,\ [, ,])$ be a finite tribracket. 
A map $C : \mathcal{R}(D_L) \to X$ is called an \textit{$X$-coloring} of $D_L$ if it satisfies the rule shown in Figure~\ref{fig:1} around each crossing of $D_L$.
\begin{figure}[htbp]
\centering\includegraphics[width=8cm]{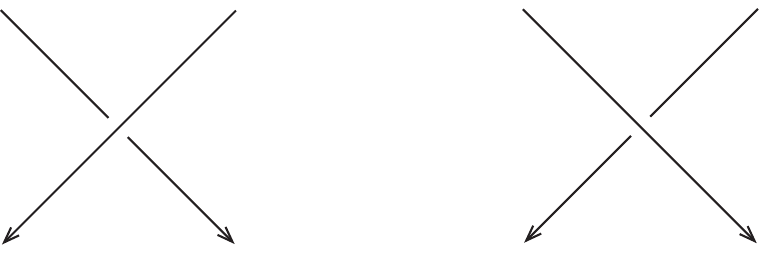}
\begin{picture}(400,0)(0,0)
\put(83,0){{\rm Positive crossing}}
\put(235,0){{\rm Negative crossing}}
\put(90,53){$x_c$}
\put(118,80){$y_c$}
\put(118,25){$z_c$}
\put(148,53){$[x_c, y_c, z_c]$}
\put(245,53){$x_c$}
\put(275,80){$z_c$}
\put(275,25){$y_c$}
\put(305,53){$[x_c, y_c, z_c]$}
\end{picture}
\caption{The rule for a region coloring around each vertex.}
\label{fig:1}
\end{figure}
The set of $X$-colorings of $D_L$ is denoted by $\text{Col}_X(D_L)$. 
The cardinality of $\text{Col}_X(D_L)$ is called the \textit{$X$-coloring number} of the diagram $D_L$. 
We define a map $\operatorname{cl}_C : \mathcal{C}(D_L) \to X^3$ by $\operatorname{cl}_C(c) = (x_c, y_c, z_c)$, 
where $x_c, y_c, z_c$ are the colors of the regions around $c \in \mathcal{C}(D_L)$ as shown in Figure~\ref{fig:1}. 
\end{definition}

Let $D_L$ be a diagram of a link $L$, and let $C$ be an $X$-coloring of $D_L$. Suppose $D_L'$ 
is the diagram obtained by performing a directed Reidemeister move on $D_L$. 
Then, there exists a unique coloring of $D_L'$ that agrees with $C$ outside the local part of 
the diagram where the move is performed \cite{Nie14}. 
Therefore, the $X$-coloring number of a link diagram defines an invariant of oriented links, 
which we call the \textit{$X$-coloring number} of $L$. The process of obtaining $(D_L', C')$ from $(D_L, C)$ 
as described above is referred to as an \textit{$X$-colored Reidemeister move}.

Let $C$ be an $X_2$-coloring of a link diagram $D_L$. We say that $C$ is \textit{trivial} if $C$ is a constant map. 
On the other hand, $C$ is called a \textit{checkerboard $X_2$-coloring} if, for any region $r \in \mathcal{R}(D_L)$, 
the colors of adjacent regions to $r$ are all different from $C(r)$.

\begin{definition}[Aggarwal--Nelson--Rivera \cite{ANR21,ANR24}]\label{def:tribracketbracket}
Let $X$ be a tribracket and $R$ a commutative ring. A pair of maps $(A, B)$, where 
$A, B : X^3 \to R^\times$ and $A(a, b, c) = A_{a,b,c}$, $B(a, b, c) = B_{a,b,c}$ (with $R^\times$ denoting the group of units of $R$), 
is called a \textit{tribracket bracket with respect to $X$ and $R$} if it satisfies the following conditions:
    \begin{enumerate}
        \item \label{1} There exists an element $\delta \in R$ such that for all $a, b, c \in X$, we have
        \[\delta = - A_{a,c,b} B_{a,b,c}^{-1} - A_{a,b,c}^{-1} B_{a,b,c} . \]
        \item \label{2}
        There exists an element  $w \in R$, called the \textit{distinguish element} of $(A, B)$, such that for all $a,b \in X$, we have  
        \[ w = -A_{a,b,b}^2 B_{a,b,b}^{-1} . \]
    \item For all $a,b,c,d \in X$, we have 
    \begin{enumerate}
        \item $A_{a,b,c} A_{c, [a,b,c], [a,c,d]} A_{a,c,d} = A_{b, [a,b,c], [a,b,d]} A_{a,b,d} A_{d, [a,b,d], [a,c,d]}$ \label{3a}
        \item $A_{a,b,c} B_{c, [a,b,c], [a,c,d]} B_{a,c,d} = B_{b, [a,b,c], [a,b,d]} B_{a,b,d} A_{d, [a,b,d], [a,c,d]}$ \label{3b}
        \item $B_{a,b,c} B_{c, [a,b,c], [a,c,d]} A_{a,c,d} = A_{b, [a,b,c], [a,b,d]} B_{a,b,d} B_{d, [a,b,d], [a,c,d]}$ \label{3c}
        \item $\begin{aligned}
            A_{a,b,c} B_{c, [a,b,c], [a,c,d]} A_{a,c,d} = &A_{b, [a,b,c], [a,b,d]} A_{a,b,d} B_{d, [a,b,d], [a,c,d]}\\
                                                        &+ B_{b, [a,b,c], [a,b,d]} A_{a,b,d} A_{d, [a,b,d], [a,c,d]}\\
                                                        &+ \delta B_{b, [a,b,c], [a,b,d]} A_{a,b,d} B_{d, [a,b,d], [a,c,d]}\\
                                                        &+ B_{b, [a,b,c], [a,b,d]} B_{a,b,d} B_{d, [a,b,d], [a,c,d]} \label{3d}
        \end{aligned}$
        \item $\begin{aligned}
            &A_{a,b,c} A_{c, [a,b,c], [a,c,d]} B_{a,c,d} \\
            &+B_{a,b,c} A_{c, [a,b,c], [a,c,d]} A_{a,c,d} \\
            &+\delta B_{a,b,c} A_{c, [a,b,c], [a,c,d]} B_{a,c,d} \\
            &+B_{a,b,c} B_{c, [a,b,c], [a,c,d]} B_{a,c,d} = A_{b, [a,b,c], [a,b,d]} B_{a,b,d} A_{d, [a,b,d], [a,c,d]}. \label{3e}
        \end{aligned}$
    \end{enumerate}
    \end{enumerate}
\end{definition}

\begin{definition}[Aggarwal--Nelson--Rivera \cite{ANR21}]\label{def:beta}

Let $X$ be a tribracket, $R$ a commutative ring, and $(A, B)$ a tribracket bracket with respect to $X$ and $R$. 
Let $\delta, w \in R$ be as in Definition~\ref{def:tribracketbracket}. 
Let $D_L$ be a diagram of a link $L$. 
A map $s : \mathcal{C}(D_L) \to \{A, B\}$ is called a \textit{state} of $D_L$, and the set of states of $D_L$. 
is denoted by $\mathcal{S}(D_L)$. 
For each state $s$ of $D_L$, $k_s$ denotes the number of circles in the diagram obtained by smoothing each crossing as in 
Table~\ref{tab:smoothing}. 
%, where the colors of the regions around each crossing $c \in \mathcal{C}(D_L)$ is as in Figure \ref{fig:coloring}. 
    \begin{table}[htbp]
        \centering
        \begin{tabular}{c|c|c}
            & $\sign (c) = +1$ & $\sign (c) = -1$\\
            \hline
            \raisebox{7mm}{$s(c) = A$} & \includegraphics{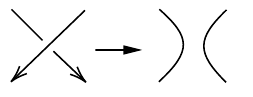} & \includegraphics{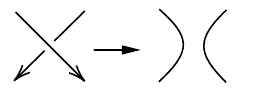}\\
            \hline
            \raisebox{7mm}{$s(c) = B$} & \includegraphics{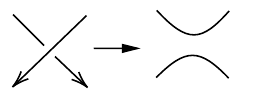} & \includegraphics{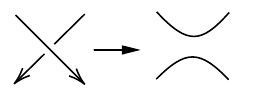}
        \end{tabular}
        \caption{Smoothing at a crossing $c$}
        \label{tab:smoothing}
    \end{table}
%    \begin{figure}[htbp]
%        \centering
%        \input{fig_crossing_coloring.tex}
%        \caption{Colors around a crossing $c$}
%        \label{fig:coloring}
%    \end{figure}
Let $C$ be an $X$-coloring of $D_L$. 
We then define an element $\beta_X^{(A, B)}(D_L, C) \in R$ by 
\[
\beta_X^{(A, B)}(D_L, C) = w^{n-p} \sum_{s \in \mathcal{S}(D_L)} \delta^{k_s} \prod_{c \in \mathcal{C}(D_L)} s(c)(\cl_C (c))^{\sign(c)}, 
 \]
where $p$ and $n$ denote the number of positive and negative crossings of $D_L$, respectively, and $\sign(c)$ denotes the sign of a crossing $c$. 
When $X$ and $(A, B)$ are clear from the context, we simply write $\beta(D_L, C)$ to refer to $\beta_X^{(A, B)}(D_L, C)$. 
\end{definition}

\begin{theorem}[Aggarwal--Nelson--Rivera \cite{ANR21}]\label{theorem:invariance of beta by Aggarwal--Nelson--Rivera}
Let $X$ be a tribracket and $(A, B)$ a tribracket bracket with respect to $X$ and a commutative ring. 
Then, for a diagram $D_L$ of $L$ and an $X$-coloring $C$ of $D_L$, $\beta_X^{(A, B)}(D_L, C)$ 
is invariant under $X$-colored Reidemeister moves. 
\end{theorem}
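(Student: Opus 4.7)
The plan is to mimic the classical proof of invariance for the Kauffman bracket polynomial, adapted to the region-colored setting. One observes that Definition~\ref{def:tribracketbracket} has been engineered so that its five families of axioms correspond precisely to the local computations needed at each Reidemeister move. So I would organize the argument move by move, reducing each case to an algebraic identity among the $A_{a,b,c}$'s, $B_{a,b,c}$'s, $\delta$, and $w$.

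First I would treat the Reidemeister II move. Given an $X$-coloring $C$ of a tangle containing the two crossings involved in an R2 move, one enumerates the $2^2 = 4$ states on the pair of crossings. Two of these states produce a trivial smoothing pattern that matches the R2-smoothed diagram, while the other two produce an extra circle and come with a factor of $\delta$. Summing their weighted contributions and using axiom~(1), namely $\delta = -A_{a,c,b} B_{a,b,c}^{-1} - A_{a,b,c}^{-1} B_{a,b,c}$, shows that the contributions with extra circles cancel the ``wrong'' smoothings, leaving only the smoothing consistent with the move. The writhe does not change under R2, so the prefactor $w^{n-p}$ is unaffected.

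Next I would handle Reidemeister I. A single positive (respectively negative) kink contributes a crossing whose two smoothings collapse to either a free circle times the diagram without the kink or just the diagram without the kink, with colors forced by axiom~(2). Summing and using $w = -A_{a,b,b}^2 B_{a,b,b}^{-1}$, one finds that the local bracket picks up a factor of $w$ (respectively $w^{-1}$). However, an R1 move changes $p$ or $n$ by $\pm 1$, so the normalization factor $w^{n-p}$ absorbs exactly this change, yielding invariance.

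Finally, I would tackle Reidemeister III, which is the main obstacle. Here one must show that for each of the $2^3 = 8$ ways to assign $A$ or $B$ to the three crossings on one side of the move, the sum of contributions equals the corresponding sum on the other side, where the regions and colors on the two sides of the move are related by the tribracket identity (2) of Definition~\ref{def:tribracket}. After simplifying circle counts, the eight equations collapse into the five identities (3a)--(3e): the three ``pure'' cases $AAA$, $ABB$-type, and $BBA$-type give (3a), (3b), (3c), while the two genuinely mixed cases where smoothings create or destroy a circle on one side but not the other give the more elaborate identities (3d) and (3e), each of which combines four monomials weighted by $\delta$. Verifying that the eight-term enumeration really does collapse to precisely these five equations is the bookkeeping-intensive heart of the proof; it is exactly the reason Definition~\ref{def:tribracketbracket} has the shape it does. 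Once this is done, invariance under all oriented Reidemeister moves follows, and since any two diagrams of $L$ are related by a finite sequence of such moves, $\beta_X^{(A,B)}(D_L,C)$ is well-defined on $X$-colored link equivalence classes.
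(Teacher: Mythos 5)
The first thing to note is that the paper itself contains no proof of this statement: Theorem~\ref{theorem:invariance of beta by Aggarwal--Nelson--Rivera} is quoted, with attribution, from Aggarwal--Nelson--Rivera \cite{ANR21} and is used as a black box (it is what makes $\Phi_X^{(A,B)}$ in Definition~\ref{def:phi} well defined). So your proposal can only be measured against the original argument of \cite{ANR21}, and its architecture does match that argument (and the classical Kauffman bracket proof it generalizes): locality of the state sum, R1 absorbed by axiom~(\ref{2}) together with the writhe factor $w^{n-p}$, R2 handled by axiom~(\ref{1}), and R3 handled by axioms~(\ref{3a})--(\ref{3e}). Your R1 paragraph is essentially right: at a kink the color triple automatically has the repeated form $(a,b,b)$, the two smoothings of a positive kink contribute $A_{a,b,b}\delta + B_{a,b,b}$, and axiom~(\ref{1}) with $b=c$ turns this into $-A_{a,b,b}^2B_{a,b,b}^{-1} = w$, which the change in the prefactor $w^{n-p}$ cancels.

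As a proof, however, the proposal has concrete gaps. First, the R2 bookkeeping you describe is wrong: of the four states on the bigon, exactly \emph{one} yields the pattern of the R2-reduced diagram, \emph{two} yield the other planar pattern with no extra circle, and \emph{one} yields the other pattern together with an extra circle. This $(1,2,1)$ count is forced by the planar geometry of smoothing a bigon and is independent of conventions, so your ``two trivial states and two circle states'' cannot be fixed by relabeling; the actual cancellation is that the two circle-free wrong terms plus $\delta$ times the circled term vanish, which is precisely axiom~(\ref{1}), while the surviving state has coefficient $1$. Relatedly, there are two oriented R2 moves, and the color triples read off at the two crossings depend on the strand orientations and in general differ by a swap of the last two entries --- the asymmetric form $\delta = -A_{a,c,b}B_{a,b,c}^{-1} - A_{a,b,c}^{-1}B_{a,b,c}$ of axiom~(\ref{1}) exists precisely to accommodate this --- so both variants must be checked. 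Second, and more seriously, your R3 step is an assertion rather than a proof: you claim the $2^3$-state enumerations on the two sides collapse to identities (\ref{3a})--(\ref{3e}), and you yourself call this verification ``the bookkeeping-intensive heart,'' but you never carry it out; grouping the eight states on each side by the planar pattern of the smoothed tangle, tracking which states create circles, and matching coefficients pattern by pattern is the actual content of the theorem, so omitting it leaves the proof incomplete. Finally, a complete argument should fix a generating set of \emph{oriented} Reidemeister moves and invoke the fact (quoted in Section~\ref{sec: Tribracket colorings of link diagrams and their quantum enhancement polynomials}) that the coloring extends uniquely across each move and is unchanged outside the disk, so that the local identities really do suffice to compare $\beta_X^{(A,B)}$ on the two diagrams.
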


\begin{definition}[Aggarwal--Nelson--Rivera \cite{ANR21}]\label{def:phi}
Let $X$ be a tribracket and $(A, B)$ a tribracket bracket with respect to $X$ and a commutative ring. 
For each link $L$, the formal polynomial $\Phi_X^{(A, B)}(L)$ is defined by
    \[
        \Phi_X^{(A, B)}(L) = \sum_{C \in \text{Col}_X(D_L)} u^{\beta^{(A, B)}_X(D_L,C)}, 
    \]
where $D_L$ is a diagram of $L$. 
We call this the \textit{quantum enhancement polynomial} associated with $(X; A, B)$. 
Note that this is an invariant of $L$ by Theorem~\ref{theorem:invariance of beta by Aggarwal--Nelson--Rivera}. 
\end{definition}

\begin{example}
Let $(X, [, ,])$ be the \textit{trivial tribracket} defined by $X = \{0\}$ and $[0,0,0] = 0$. 
Let $R := \ZZ[x, x^{-1}]$ be the Laurent polynomial ring in one variable. 
If we define $A, B : X \to R^\times$ by $A(0,0,0) = x$ and $B(0,0,0) = x^{-1}$, then $(A, B)$ is a tribracket bracket with respect to $X$ and $R$.

Let $D_L$ be a diagram of $L$, and $C$ be the trivial $X$-coloring of $D_L$. By \cite{ANR21}, we have $\beta_X^{(A, B)}(D_L, C) = K(L)$, where $K(L)$ is the Kauffman bracket polynomial normalized such that $K(O) = -x^2 - x^{-2}$, where $O$ denotes the trivial knot. Therefore, if we substitute $x = t^{-\frac{1}{4}}$ into $\beta_X^{(A, B)}(D_L, C)$, we obtain the Jones polynomial $J(L)$ normalized as $J(O) = -t^{\frac{1}{2}} - t^{-\frac{1}{2}}$. Since the trivial $X$-coloring is the unique $X$-coloring of $D_L$, the invariant $\Phi_X^{(A, B)}$ is actually equivalent to the Jones polynomial. 
\end{example}

\section{Examples}

In this section, we provide formulas for the quantum enhancement polynomials associated with the canonical two-element tribracket 
$X_2$ for $(2, q)$ torus links and generalized twist links.

For an integer $k$, a box labeled $k$ denotes $k$ half-twists, as shown in Figure~\ref{fig:twist}.
\begin{figure}[htbp]
\centering\includegraphics[width=0.9\textwidth]{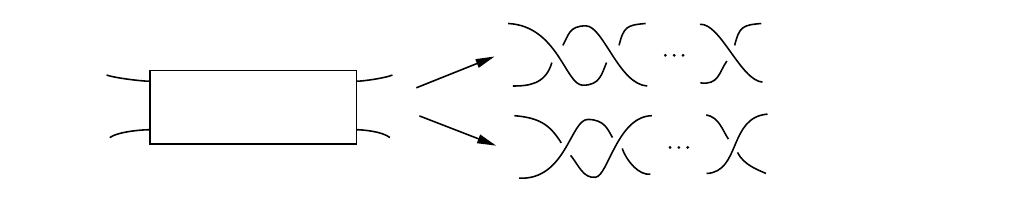}
\begin{picture}(400,0)(0,0)
\put(102,52){$k$}
\put(160,77){$k \geq 0$}
\put(160,30){$k < 0$}
\end{picture}
    \caption{$k$ half-twists.}
    \label{fig:twist}
\end{figure}

Recall that for an integer $q$, the (unoriented) link $T(2,q)$ shown in Figure~\ref{fig:torus_link} is called a \emph{$(2,q)$-torus link}. 
We refer to the diagram in Figure~\ref{fig:torus_link} as the \emph{standard diagram} of $T(2, q)$.

\begin{figure}[htbp]
\centering\includegraphics[width=1.2\textwidth]{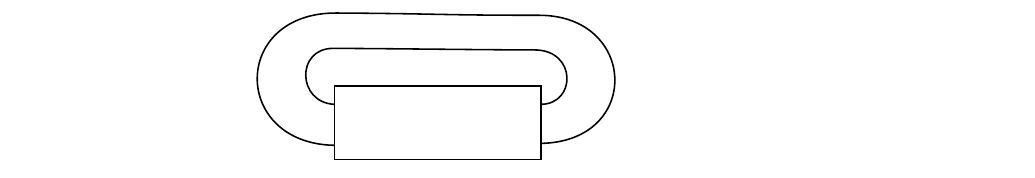}
\begin{picture}(400,0)(0,0)
\put(200,37){$q$}
\end{picture}
    \caption{The standard diagram of the $(2,q)$-torus link $T(2, q)$.}
    \label{fig:torus_link}
\end{figure}

\begin{proposition}
\label{prop: torus_link}
Let $R$ be an arbitrary commutative ring, $(A, B)$ a tribracket bracket with respect to $X_2$ and $R$, and $q$ an arbitrary integer. Then, the quantum enhancement polynomial $\Phi^{(A, B)}{X_2}(T(2, q))$ of an oriented $T(2,q)$ is given as follows, 
where we set $n=1$ if $q=0$ or the sign of every crossing of the standard diagram of oriented $T(2,q)$ is positive, and $n=-1$ otherwise.  
    \begin{enumerate}
        \item If $q$ is odd, then $\Phi^{(A, B)}_{X_2}(T(2, q))$ does not depend of orientations and we have 
            \[
                \Phi^{(A, B)}_{X_2}(T(2, q)) = \sum_{a, b \in {X_2}}u^{w^{-q}(A_{a,b,b}^{n|q|}\delta^2 + (A_{a,b,b}^n+B_{a,b,b}^n\delta)^{|q|} - A_{a,b,b}^{n|q|})}. 
            \]

        \item If $q$ is even, then we have 
            \[
                 \Phi^{(A, B)}_{X_2} (T(2,q)) = \sum_{a,b,c \in X_2} u^{\alpha}, 
            \]
where we set 
            \begin{align*}
                \alpha &= w^{-q} (A_{a,b,c}^{n\frac{|q|}{2}}A_{a,c,b}^{n\frac{|q|}{2}}\delta^2 +(A_{a,b,c}^n+B_{a,b,c}^n\delta)^{\frac{|q|}{2}}(A_{a,c,b}^n+B_{a,c,b}^n\delta)^{\frac{|q|}{2}} - A_{a,b,c}^\frac{|q|}{2}A_{a,c,b}^\frac{|q|}{2})
            \end{align*}
if $nq \geq 0$, and we set 
            \begin{align*}
                \alpha &= w^{q} (B_{a,b,c}^{n\frac{|q|}{2}}B_{a,c,b}^{n\frac{|q|}{2}}\delta^2 +(B_{a,b,c}^n+A_{a,b,c}^n\delta)^{\frac{|q|}{2}}(B_{a,c,b}^n+A_{a,c,b}^n\delta)^{\frac{|q|}{2}} - B_{a,b,c}^\frac{|q|}{2}B_{a,c,b}^\frac{|q|}{2})
            \end{align*}
            otherwise. 
    \end{enumerate}
\end{proposition}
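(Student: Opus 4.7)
The plan is to compute $\Phi^{(A,B)}_{X_2}(T(2,q))$ directly from the standard diagram $D$ of $T(2,q)$, by first enumerating its $X_2$-colorings and then evaluating the state sum defining $\beta_{X_2}^{(A,B)}(D,C)$ for each.

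For the enumeration step, I would exploit the fact that the ternary operation of $X_2$ is the symmetric additive rule $[x,y,z] = x+y+z \pmod{2}$. Propagating this rule up the column of $q$ crossings of $D$, a short induction shows that for $q$ odd the $X_2$-colorings are parameterized by two labels $a,b \in X_2$ (giving $4$ colorings), with every crossing carrying the color triple $(a,b,b)$, and for $q$ even they are parameterized by three labels $a,b,c \in X_2$ (giving $8$ colorings), with consecutive crossings alternating between the triples $(a,b,c)$ and $(a,c,b)$. This accounts for the summations over $X_2^2$ and $X_2^3$ in the statement. In the standard diagram every crossing also shares the same sign $n \in \{\pm 1\}$ fixed by the orientation, so $\sign(c) = n$ uniformly; the writhe prefactor $w^{n-p}$ becomes the outer $w^{-q}$ or $w^{q}$ of the statement, and the sign on each crossing appears as the exponent $n$ on each letter $A_\bullet$ and $B_\bullet$ in the state sum.

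The heart of the argument is the evaluation of the state sum $\sum_{s}\delta^{k_s}\prod_{c} s(c)(\cl_C(c))^{\sign(c)}$ for a fixed coloring $C$. The crucial combinatorial input is the loop count $k_s$ of the smoothed $2$-braid closure: using the convention of Table~\ref{tab:smoothing}, a short tracing argument shows that the state in which every crossing uses the ``parallel'' smoothing (the one preserving the two-strand structure) produces $2$ loops, while any state using $k \ge 1$ of the ``non-parallel'' smoothings produces exactly $k$ loops---intuitively, each non-parallel smoothing inserts one new closed component into the chain of cap/cup pieces between consecutive resolutions. For $q$ odd, all $|q|$ crossings share the same color triple, so grouping states by the number of non-parallel smoothings collapses the state sum into a binomial sum that rearranges to $A_{a,b,b}^{n|q|}\delta^2 + (A_{a,b,b}^n + B_{a,b,b}^n \delta)^{|q|} - A_{a,b,b}^{n|q|}$. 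For $q$ even the $|q|$ crossings split into two alternating classes of size $|q|/2$ bearing the triples $(a,b,c)$ and $(a,c,b)$, so the state sum factors into a product of two independent binomial sums, giving the two-factor formula of the statement; the distinction between $nq \ge 0$ and $nq < 0$ arises because in the smoothing table the role of ``parallel'' smoothing is played by $A$ at positive crossings and by $B$ at negative crossings, effectively swapping $A \leftrightarrow B$ in the resulting expansion.

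The main obstacle is the loop-count claim for arbitrary mixed states of the $2$-braid closure, together with the careful handling of the sign parameter $n$ in all four orientation/twist-sign combinations; once that combinatorial lemma is in place, the rest is elementary binomial bookkeeping.
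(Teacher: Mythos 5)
Your proposal follows the same route as the paper's proof: the enumeration of $X_2$-colorings of the standard diagram (four colorings with constant triple $(a,b,b)$ when $q$ is odd, eight colorings with alternating triples $(a,b,c)$ and $(a,c,b)$ when $q$ is even), the loop-count rule for the $2$-braid closure (the all-parallel state yields $2$ circles, and a state with $k\geq 1$ non-parallel smoothings yields exactly $k$ circles), and the binomial collapse of the state sum are precisely the ingredients of the paper's argument, and all of these are correct.

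There is, however, one genuine error, located in the step you yourself call the crucial combinatorial input: the dictionary between the labels $A,B$ and the parallel/non-parallel smoothings. You claim the parallel smoothing is ``played by $A$ at positive crossings and by $B$ at negative crossings.'' In Table~\ref{tab:smoothing} the label $A$ is attached to the orientation-coherent (Seifert) smoothing at positive \emph{and} negative crossings alike; the crossing sign enters only through the exponent $\sign(c)$ on the weight. Consequently, whether $A$ is the parallel smoothing of the braid is governed not by the crossing sign but by whether the two braid strands are coherently oriented, i.e.\ by whether $n=\sgn(q)$, which is exactly the condition $nq\geq 0$ in the proposition. Your rule agrees with this only when $q>0$: for $q<0$ with parallel orientation the crossings are negative, yet the parallel (vertical) smoothing is the Seifert one and hence carries the label $A$ --- the opposite of what you assert. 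Concretely, for odd $q<0$ (where $n=-1$ is forced) your rule produces the exponent
\[
w^{-q}\left(B_{a,b,b}^{n|q|}\delta^2+\left(B_{a,b,b}^{n}+A_{a,b,b}^{n}\delta\right)^{|q|}-B_{a,b,b}^{n|q|}\right),
\]
with $A$ and $B$ interchanged relative to the stated formula, and the same swap occurs in the even case for every $q<0$; so the sketch, carried out literally, proves statements that contradict the proposition for all negative $q$. The fix is to key the dichotomy to the coherence of the strand orientations ($nq\geq0$ versus $nq<0$) rather than to the crossing sign; with that correction your loop-count lemma and binomial bookkeeping go through verbatim and reproduce the paper's computation.
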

The proof is provided in the Appendix.

Let $q$ be an integer. 
The (unoriented) knot $\mathit{TW}(q)$ shown in Figure~\ref{fig:twist_knot} is called the \emph{$q$-twist knot}. 
We refer to the diagram in Figure~\ref{fig:twist_knot} as the \emph{standard diagram} of $\mathit{TW}(q)$.

\begin{figure}
\centering\includegraphics[width=1.2\textwidth]{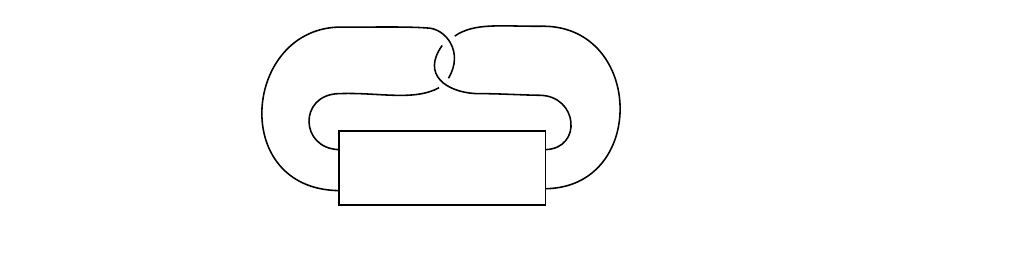}
\begin{picture}(400,0)(0,0)
\put(202,52){$q$}
\end{picture}
    \caption{The standard diagram of the $q$-twist knot $\mathit{TW}(q)$. }
    \label{fig:twist_knot}
\end{figure}

\begin{proposition}
\label{prop: twist_knot}
Let $R$ be an arbitrary commutative ring, $(A, B)$ a tribracket bracket with respect to $X_2$ and $R$, and $q$ an arbitrary integer. 
Then, the quantum enhancement polynomial $\Phi^{(A, B)}_{X_2}(\mathit{TW}(q))$ of an oriented $\mathit{TW}(q)$ is independent of orientation and is given as follows, 
where we set $n=1$ $($resp. $m=1$$)$ if $q \geq 0$ $($resp. $q < 0$$)$. 
    \[
        \Phi^{(A, B)}_{X_2}(\mathit{TW}(q)) = \sum_{a, b \in X_2} u^\alpha, 
    \]
   where if $q$ is even, $\alpha$ is defined by 
    \[
        \alpha =w^{n(q+2)}((A_{a,b,b}^n\delta + B_{a,b,b}^n)^{|q|} B_{a,b,b}^{n|q|}(\delta^2-1))(A_{a,b,b}^{2n}\delta + 2A_{a,b,b}^nB_{a,b,b}^n + B_{a,b,b}^{2n}\delta) - B_{a,b,b}^{n(|q|+2)}\delta(\delta^2-1)),
    \]
    and if $q$ is odd, $\alpha$ is defined by 
    \[
        \alpha = w^{n(q+2)}((A_{a,b,b}^n + B_{a,b,b}^n\delta)^{|q|} A_{a,b,b}^{n|q|}(\delta^2-1))(A_{a,b,b}^{2n}\delta + 2A_{a,b,b}^nB_{a,b,b}^n + B_{a,b,b}^{2n}\delta) - A_{a,b,b}^{n(|q|+2)}\delta(\delta^2-1)). 
    \]
\end{proposition}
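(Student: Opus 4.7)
The plan is to follow the strategy used for Proposition~\ref{prop: torus_link}: first classify the $X_2$-colorings of the standard diagram $D$ of $\mathit{TW}(q)$, then split the state sum $\beta_{X_2}^{(A,B)}(D,C)$ into a contribution from the two-crossing clasp and a contribution from the $|q|$-crossing twist region, evaluate each factor in closed form, and finally sum over colorings to read off $\Phi_{X_2}^{(A,B)}(\mathit{TW}(q))$.

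First I would enumerate the colorings. Because $[x,y,z]=x+y+z$ in $X_2$, the four regions meeting each crossing sum to $0$, and propagating this constraint along $D$ shows that an $X_2$-coloring is determined by a pair $(a,b)\in X_2^2$ (for example the colors of the unbounded region and a specified neighbor), giving exactly four colorings. Moreover, in each such coloring only the two values $a$ and $b$ occur as region colors of $D$, and at every crossing the triple $\cl_C(c)=(x_c,y_c,z_c)$ satisfies $y_c=z_c$. This is why only the diagonal entries $A_{a,b,b}$ and $B_{a,b,b}$ appear in the statement.

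Next I would decompose the state sum. The two clasp crossings and the $|q|$ twist crossings all have the same sign $n$, which accounts for the writhe normalization $w^{n(q+2)}$. The sum over $s\in\mathcal{S}(D)$ factors according to the restriction of $s$ to the clasp and to the twist. The twist factor is analyzed exactly as in Proposition~\ref{prop: torus_link}: depending on whether the two strands traversing the twist are parallel or antiparallel, which is controlled by the parity of $q$, an $A$-smoothing either keeps the strands separate or produces a nugatory loop, and dually for a $B$-smoothing, so the partial sum over twist states collapses to a closed expression built from $(A_{a,b,b}^n+B_{a,b,b}^n\delta)^{|q|}$ or $(A_{a,b,b}^n\delta+B_{a,b,b}^n)^{|q|}$ together with a correction accounting for the \emph{all-$A$} state whose closure carries an extra global loop. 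For the clasp, the four states are read off directly from Table~\ref{tab:smoothing} and contribute $A_{a,b,b}^{2n}\delta+2A_{a,b,b}^n B_{a,b,b}^n+B_{a,b,b}^{2n}\delta$, the two factors of $\delta$ coming from the extra circles produced by each of the two \emph{pure} smoothings of the clasp.

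The main obstacle will be the careful bookkeeping of circle counts at the interface between the clasp and the twist region: loops generated inside the twist may or may not merge with loops generated inside the clasp, and this behavior switches depending on the parity of $q$ because an odd number of half-twists exchanges the two strands leaving the twist while an even number does not. This is precisely what forces the two separate formulas for $\alpha$ stated in the proposition. Once I tabulate, for each of the four clasp states and each $k\in\{0,1,\ldots,|q|\}$, the number of circles in the smoothed diagram as a function of how many of the twist crossings are smoothed by $A$, the two claimed closed forms for $\alpha$ follow by elementary algebra and summation over the four colorings $(a,b)\in X_2^2$. The independence of $\Phi^{(A,B)}_{X_2}(\mathit{TW}(q))$ from the choice of orientation then follows from the symmetry between the two orientations: reversing the orientation of the knot preserves the four colorings (since the ternary operation $[x,y,z]=x+y+z$ on $X_2$ is symmetric in its arguments) and preserves all crossing signs.
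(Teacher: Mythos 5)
Your proposal follows essentially the same route as the paper's proof: the paper likewise parametrizes the four colorings as $C_{a,b}$, fixes the smoothings $S_1, S_2$ of the two clasp crossings, tabulates the circle count $k_s$ as a function of the number of $A$-smoothed twist crossings for each of the four clasp states, and then sums the resulting closed forms over $(a,b) \in X_2^2$. One small caution for your tabulation step: which twist state is exceptional (all-$A$ versus all-$B$) switches with the parity of $q$ --- in the paper's even-$q$ computation it is the all-$B$ state that acquires extra circles --- but this is precisely the bookkeeping your plan already defers to that step.
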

The proof is provided in the Appendix.

\section{Basic properties of quantum enhancement polynomials}
\label{sec: Basic properties of quantum enhancement polynomials}

In this section, we discuss the basic properties of quantum enhancement polynomials associated with 
the canonical two-element tribracket $X_2$.

\subsection{$X_2$-colorings of link diagrams}
\label{subsec:X-colorings of link diagrams}

Let $L = K_1 \cup \cdots \cup K_n$ be an $n$-component link, and let $D_L \subset \mathbb{R}^2$ be a diagram of $L$. For each $i \in \{1, \ldots, n\}$, let $D_i \subset D_L$ be the diagram of $K_i$. A pair $(\mathcal{D}_1, \mathcal{D}_2)$ of (possibly empty) subsets $\mathcal{D}_1, \mathcal{D}_2$ of $\{D_1, \ldots, D_n\}$ with $\mathcal{D}_1 \cap \mathcal{D}_2 = \emptyset$ and $\mathcal{D}_1 \cup \mathcal{D}_2 = \{D_1, \ldots, D_n\}$ is called a \emph{decomposition} of $\{D_1, \ldots, D_n\}$.

For a decomposition $(\mathcal{D}_1, \mathcal{D}_2)$ of $\{D_1, \ldots, D_n\}$, we denote by $C(a, \mathcal{D}_1, \mathcal{D}_2)$ the $X_2$-coloring of $D_L$ satisfying the following two conditions:

\begin{enumerate} 
\item The unbounded region of $D_L$ is colored by $a$. 
\item If two regions $r, r' \in \mathcal{R}(D_L)$ are adjacent across $D_i \in \mathcal{D}_1$, 
then $r$ and $r'$ are colored by different elements of $X_2$. If $r, r' \in \mathcal{R}(D_L)$ 
are adjacent across $D_i \in \mathcal{D}_2$, then $r$ and $r'$ are colored by the same element of $X_2$. \end{enumerate}
Figure~\ref{fig:coloring_example} shows the coloring $C(0, \{D_1\}, \{D_2, D_3\})$ of a diagram of a $3$-component link. 
\begin{figure}[htbp]
\centering\includegraphics[width=1.0\textwidth]{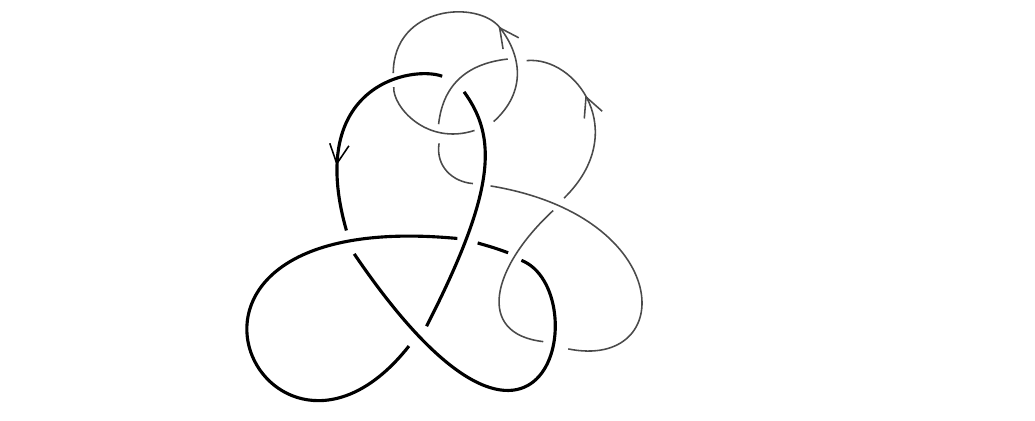}
\begin{picture}(400,0)(0,0)

\put(72,50){$D_1$}
\put(134,165){$D_2$}
\put(247,80){$D_3$}

\put(225,64){$0$}
\put(154,67){$0$}
\put(193,90){$0$}
\put(205,122){$0$}
\put(188,145){$0$}
\put(170,162){$0$}

\put(200,56){$1$}
\put(179,48){$1$}
\put(115,48){$1$}
\put(150,110){$1$}
\put(175,116){$1$}
\put(173,133){$1$}
\put(158,141){$1$}

\end{picture}
    \caption{The coloring $C(0, \{D_1\}, \{D_2, D_3\})$.}
    \label{fig:coloring_example}
\end{figure}

\begin{proposition}\label{prop:coloring_number}
Let $D_L = D_1 \cup \cdots \cup D_n$ be a diagram of a link $L = K_1 \cup \cdots \cup K_n$. 
Then, a map $C : \mathcal{R}(D_L) \to X_2$ is an $X_2$-coloring of $D_L$ if and only if 
there exists a decomposition $(\mathcal{D}_1, \mathcal{D}_2)$ of $\{D_1, \ldots, D_n\}$ and an element $a \in X_2$ 
such that $C = C(a, \mathcal{D}_1, \mathcal{D}_2)$.
\end{proposition}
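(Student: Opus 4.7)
The plan is to prove both directions of the equivalence. For the forward direction, that each $C(a,\mathcal{D}_1,\mathcal{D}_2)$ is a valid $X_2$-coloring, I would exploit the fact that for $X_2 = \ZZ/2\ZZ$ the tribracket operation satisfies $[x,y,z] = x+y-z \equiv x+y+z \pmod 2$, so the coloring condition at a crossing is equivalent to requiring the four surrounding region colors to sum to $0$. Writing the four colors at any crossing as $a_0,\ a_0+j_1,\ a_0+j_2,\ a_0+j_1+j_2$, where $j_k \in X_2$ is the ``jump'' across the $k$-th strand at that crossing (equal to $1$ if the strand's component lies in $\mathcal{D}_1$ and $0$ if in $\mathcal{D}_2$), the sum is $4a_0 + 2j_1 + 2j_2 \equiv 0$, as required.

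For the converse, let $C$ be an arbitrary $X_2$-coloring of $D_L$ and set $a := C(r_\infty)$, where $r_\infty$ is the unbounded region. For each edge $e$ of $D_L$ define the \emph{jump} $\delta(e) := C(r) + C(r') \in X_2$, with $r, r'$ the two regions adjacent across $e$. The crux of the argument is the claim that $\delta$ is constant on the edges of each $D_i$. Granting this, I would set $\mathcal{D}_1 := \{D_i : \delta|_{D_i} = 1\}$ and $\mathcal{D}_2 := \{D_i : \delta|_{D_i} = 0\}$, and verify $C = C(a,\mathcal{D}_1,\mathcal{D}_2)$ by noting that the two colorings agree on $r_\infty$ and that the value on any other region is obtained by summing the (now well-defined) jumps along any path from $r_\infty$ to that region.

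The key lemma will be established in two steps. The \emph{local} step is that, at any single crossing, rearranging the coloring relation $C(r_1)+C(r_2)+C(r_3)+C(r_4) = 0$ yields $\delta(e) = \delta(e')$ whenever $e, e'$ are the two edges continuing through the crossing on a single strand (the over-strand or the under-strand). The \emph{global} step is to traverse $D_i$ as a closed curve: at every crossing encountered, the incoming and outgoing edges of $D_i$ lie on a common strand of that crossing, so by the local step the jump value is preserved across the crossing. Since traversing $D_i$ visits its edges in a single cycle, $\delta$ must be constant on $D_i$.

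The main obstacle is the global step. At a self-crossing of $K_i$ the local step only equates the jumps of the two over-edges, and separately of the two under-edges, but \emph{a priori} these two common values could differ. Their equality must be deduced globally, by noting that the two strands meeting at the self-crossing are joined through the remainder of the closed curve $K_i$, along which the local step propagates a single jump value. This is the place where the argument genuinely uses that each $K_i$ is one connected closed curve rather than a disjoint pair of arcs at the self-crossing.
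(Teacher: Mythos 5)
Your proposal is correct and takes essentially the same approach as the paper: your key lemma (the jump $\delta(e)=C(r)+C(r')$ is constant on the edges of each $D_i$) is precisely the paper's Claim, proved the same way by a local step at each crossing followed by a traversal of the component, after which the decomposition $(\mathcal{D}_1,\mathcal{D}_2)$ and the verification $C=C(a,\mathcal{D}_1,\mathcal{D}_2)$ are read off identically. Your only departures are presentational --- the unified sum-to-zero computation $4a_0+2j_1+2j_2\equiv 0 \pmod 2$ in the forward direction replaces the paper's case-by-case check with figures, and you make explicit the self-crossing subtlety that the paper compresses into ``applying the same argument repeatedly'' --- so there is nothing further to add.
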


\begin{proof}
We first prove the "if" part. Let $(\mathcal{D}_1, \mathcal{D}_2)$ be a decomposition of 
$\{D_1, \ldots, D_n\}$ and $a \in X_2$. We will show that $C(a, \mathcal{D}_1, \mathcal{D}_2)$ is an $X_2$-coloring of $D_L$. 
Let $c$ be a crossing of $D_i$ and $D_j$.

If $D_i, D_j \in \mathcal{D}_1$, then by the definition of $C(a, \mathcal{D}_1, \mathcal{D}_2)$, the regions around $c$ are colored as shown in either case in Figure~\ref{fig:crossing_case1} up to over/under information and orientations. 
    \begin{figure}[htpb]
    \centering\includegraphics[width=1.2\textwidth]{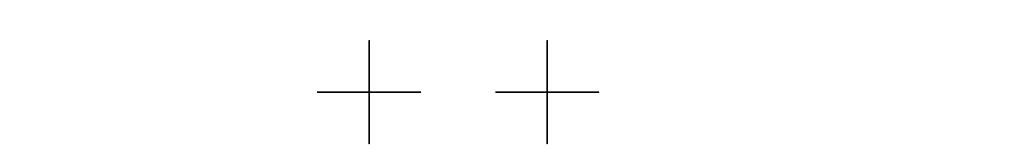}
\begin{picture}(400,0)(0,0)

\put(130,44){$D_i$}
\put(164,79){$D_j$}
\put(155,57){$0$}
\put(179,57){$1$}
\put(155,32){$1$}
\put(179,32){$0$}

\put(217,44){$D_i$}
\put(251,79){$D_j$}
\put(242,57){$1$}
\put(266,57){$0$}
\put(242,32){$0$}
\put(266,32){$1$}

\end{picture}
        \caption{The case where $D_i, D_j \in \mathcal{D}_1$.}
        \label{fig:crossing_case1}
    \end{figure}
If $D_i, D_j \in \mathcal{D}_2$, then the regions around $c$ are colored as shown in either case in Figure~\ref{fig:crossing_case2} up to over/under information and orientations. 
    \begin{figure}[htpb]
        \centering\includegraphics[width=1.2\textwidth]{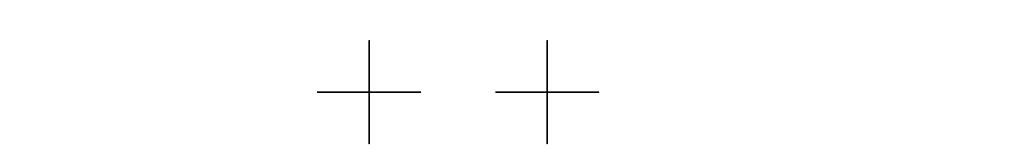}
\begin{picture}(400,0)(0,0)

\put(130,44){$D_i$}
\put(164,79){$D_j$}
\put(155,57){$0$}
\put(179,57){$0$}
\put(155,32){$0$}
\put(179,32){$0$}

\put(217,44){$D_i$}
\put(251,79){$D_j$}
\put(242,57){$1$}
\put(266,57){$1$}
\put(242,32){$1$}
\put(266,32){$1$}

\end{picture}
        \caption{The case where $D_i, D_j \in \mathcal{D}_2$.}
        \label{fig:crossing_case2}
    \end{figure}
Finally, if $D_i \in \mathcal{D}_1$ and $D_j \in \mathcal{D}_2$, or if $D_j \in \mathcal{D}_1$ and $D_i \in \mathcal{D}_2$, then the regions around $c$ are colored as in either case in Figure~\ref{fig:crossing_case3} up to over/under information and orientations. 
    \begin{figure}[htpb]
        \centering\includegraphics[width=1.2\textwidth]{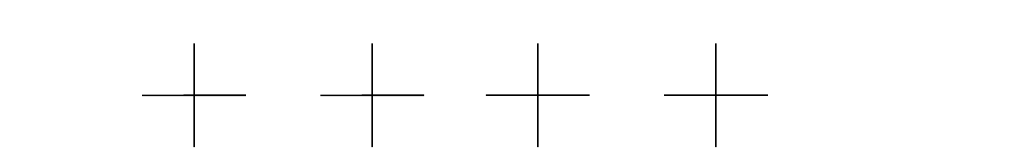}
\begin{picture}(400,0)(0,0)

\put(45,44){$D_i$}
\put(79,79){$D_j$}
\put(70,57){$0$}
\put(94,57){$0$}
\put(70,32){$1$}
\put(94,32){$1$}

\put(132,44){$D_i$}
\put(166,79){$D_j$}
\put(157,57){$1$}
\put(181,57){$1$}
\put(157,32){$0$}
\put(181,32){$0$}

\put(212,44){$D_i$}
\put(246,79){$D_j$}
\put(237,57){$0$}
\put(261,57){$1$}
\put(237,32){$0$}
\put(261,32){$1$}

\put(300,44){$D_i$}
\put(334,79){$D_j$}
\put(325,57){$1$}
\put(349,57){$0$}
\put(325,32){$1$}
\put(349,32){$0$}

\end{picture}
        \caption{The case where $D_i \in \mathcal{D}_1$ and $D_j \in \mathcal{D}_2$, or $D_j \in \mathcal{D}_1$ and $D_i \in \mathcal{D}_2$.} 
        \label{fig:crossing_case3}
    \end{figure}
Therefore, in all cases, $C(a, \mathcal{D}_1, \mathcal{D}_2)$ satisfies the relation shown in Figure~\ref{fig:1} around the crossing $c$, 
and hence $C(a, \mathcal{D}_1, \mathcal{D}_2)$ is an $X_2$-coloring of $D_L$. 

Next, we prove the "only if" part. 
Let $C$ be an $X_2$-coloring of $D_L$. 
We call a connected component of $D_L$ minus its crossings an \emph{edge} of $D_L$. 
For each edge $\alpha$ of $D_L$, we denote by $r_\alpha, r'_\alpha \in \mathcal{R}(D_L)$ the two regions facing each other on $\alpha$. 

\begin{claim}\label{claim:same_color}
If there exists an edge $\alpha \subset D_i$ such that $C(r_\alpha) = C(r'_\alpha)$, then for any edge $\beta \subset D_i$, we have $C(r_\beta) = C(r'_\beta)$. 
    \end{claim}
    \begin{proof}
Suppose that there exists an edge $\alpha \subset D_i$ of $D_L$ with $C(r_\alpha) = C(r'_\alpha) = x$. Let $c$ be the endpoint of $\alpha$, and let $\beta \subset D_i$ be the adjacent edge across $c$. Since $C$ is an $X_2$-coloring, we have $C(r_\beta) = C(r'_\beta) = x$ or $C(r_\beta) = C(r'_\beta) = x+1$ (See Figure~\ref{fig:coloring_same_color}). 
Applying the same argument repeatedly, we get the desired result.
\begin{figure}[htbp]
\centering\includegraphics[width=1.2\textwidth]{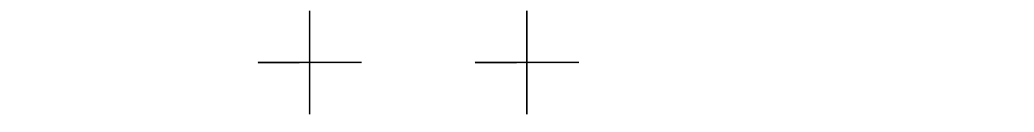}
\begin{picture}(400,0)(0,0)

\put(104,44){$\alpha$}
\put(168,44){$\beta$}
\put(145,38){$c$}
\put(126,60){$x$}
\put(150,60){$x$}
\put(126,28){$x$}
\put(150,28){$x$}

\put(212,44){$\alpha$}
\put(276,44){$\beta$}
\put(250,38){$c$}
\put(230,60){$x$}
\put(254,60){$x+1$}
\put(230,28){$x$}
\put(254,28){$x+1$}

\end{picture}
\caption{Colors of the regions around $c$.}
\label{fig:coloring_same_color}
\end{figure}
    \end{proof}
 Let $\mathcal{D}_2$ be the set of all $D_i$ containing an edge $\alpha$ of $D_L$ such that $C(r_\alpha) = C(r'_\alpha)$. Set $\mathcal{D}_1 = \{D_1, \ldots, D_n\} - \mathcal{D}_2$. Then, by Claim~\ref{claim:same_color}, for any $D_i \in \mathcal{D}_2$, 
 the regions on both sides of any edge in $D_i$ are colored the same. 
 Moreover, for any $D_i \in \mathcal{D}_1$, the regions on both sides of any edge in $D_i$ are colored differently. 
 Therefore, we have $C = C(a, \mathcal{D}_1, \mathcal{D}_2)$, where $a$ is the color of the unbounded region of $D_L$.
  \end{proof}

Using Proposition~\ref{prop:coloring_number}, we can show that 
the number of $X_2$-colorings of a link diagram depends only on the number of components of the link, as follows.
\begin{corollary}
\label{cor:knot_coloring}
Let $D_L$ be a diagram of an $n$-component link $L$. Then, the number of $X_2$-colorings of $D_L$ is $2^{n+1}$. 
In particular, if $L$ is a knot, then the number of $X_2$-colorings of $D_L$ is four, consisting of two trivial $X_2$-colorings and two checkerboard $X_2$-colorings.

\end{corollary}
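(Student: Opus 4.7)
The plan is to apply Proposition~\ref{prop:coloring_number} directly: since every $X_2$-coloring of $D_L$ is of the form $C(a,\mathcal{D}_1,\mathcal{D}_2)$ for some $a\in X_2$ and some decomposition $(\mathcal{D}_1,\mathcal{D}_2)$ of $\{D_1,\ldots,D_n\}$, I would set up a bijection
\[
X_2\times\bigl\{\text{decompositions of }\{D_1,\ldots,D_n\}\bigr\}\;\longrightarrow\;\mathrm{Col}_{X_2}(D_L),\qquad (a,\mathcal{D}_1,\mathcal{D}_2)\longmapsto C(a,\mathcal{D}_1,\mathcal{D}_2).
\]
Surjectivity is the content of Proposition~\ref{prop:coloring_number}. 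For injectivity, one recovers $a$ as the color of the unbounded region, and recovers the decomposition by declaring $D_i\in\mathcal{D}_2$ precisely when the two regions adjacent across some (equivalently, every, by Claim~1 in the proof of Proposition~\ref{prop:coloring_number}) edge of $D_i$ carry the same color, and $D_i\in\mathcal{D}_1$ otherwise. Since there are $2$ choices for $a$ and $2^n$ decompositions of an $n$-element set into an ordered pair $(\mathcal{D}_1,\mathcal{D}_2)$ of disjoint subsets with union $\{D_1,\ldots,D_n\}$, this yields $|\mathrm{Col}_{X_2}(D_L)|=2^{n+1}$.

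For the second statement, specialize to $n=1$, so that $\{D_1,\ldots,D_n\}=\{D_1\}$ admits only the two decompositions $(\{D_1\},\emptyset)$ and $(\emptyset,\{D_1\})$. When $(\mathcal{D}_1,\mathcal{D}_2)=(\emptyset,\{D_1\})$, the defining conditions of $C(a,\emptyset,\{D_1\})$ force every region to carry the same color $a$, producing the two trivial colorings as $a$ ranges over $X_2$. When $(\mathcal{D}_1,\mathcal{D}_2)=(\{D_1\},\emptyset)$, the regions across any edge of $D_1$ carry different colors, so the resulting coloring is a checkerboard $X_2$-coloring, and again varying $a\in X_2$ produces two such colorings. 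This accounts for all $2^{1+1}=4$ colorings.

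There is no real obstacle here beyond checking injectivity of the parametrization; the combinatorial count and the identification of trivial versus checkerboard colorings in the knot case are immediate from the construction of $C(a,\mathcal{D}_1,\mathcal{D}_2)$.
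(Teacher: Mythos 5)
Your proposal is correct and follows essentially the same route as the paper: both invoke Proposition~\ref{prop:coloring_number} to parametrize colorings by pairs $(a,\mathcal{D}_1)$, count $2\cdot 2^n$, and then identify the two decompositions in the knot case with the trivial and checkerboard colorings. The only difference is that you spell out the injectivity of the parametrization (recovering $a$ from the unbounded region and the decomposition from the edge-color behavior), which the paper asserts implicitly with "corresponds one-to-one."
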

\begin{proof}
Let $L = K_1 \cup \cdots \cup K_n$, and let $D_i \subset D_L$ be the diagram of $K_i$. By Proposition~\ref{prop:coloring_number},
the set of $X_2$-colorings of $D_L$ corresponds one-to-one with the set of pairs of a subset $\mathcal{D}_1 \subset \{D_1, \ldots, D_n\}$ 
and an element $a \in X_2$, so the result follows immediately.

Now, suppose $n=1$, i.e., $L$ is a knot. 
If $\mathcal{D}_1 = \emptyset$, then $C(a, \emptyset, {D_1})$ is a trivial $X_2$-coloring. 
If $\mathcal{D}_1 = {K_1}$, then $C(a, {D_1}, \emptyset)$ is a checkerboard $X_2$-coloring. 
\end{proof}

For a diagram $D_L$ of a link $L$, let $|D_L|$ denote the image of an immersion of circles obtained by forgetting the over/under crossing information in $D_L$.

\begin{lemma}\label{lem:coloring_pair}
Let $D_L$ be a diagram of a link $L$, and let $C$ be an $X_2$-coloring of $D_L$. 
Let $r_0, r_1, l_0, l_1$ denote the number of crossings in $|D_L|$ around which the regions are colored as in Figure~$\ref{fig:coloring_pair}$, respectively, 
where in the figure, the local models of colorings are ordered from left to right. Then, we have $r_0 = l_1$ and $r_1 = l_0$.

    \begin{figure}[htpb]
        \centering\includegraphics[width=1.2\textwidth]{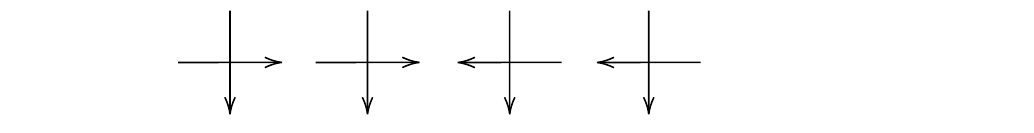}
\begin{picture}(400,0)(0,0)

\put(88,57){$0$}
\put(112,57){$0$}
\put(88,32){$1$}
\put(112,32){$1$}

\put(155,57){$1$}
\put(179,57){$1$}
\put(155,32){$0$}
\put(179,32){$0$}

\put(224,57){$0$}
\put(248,57){$0$}
\put(224,32){$1$}
\put(248,32){$1$}

\put(292,57){$1$}
\put(316,57){$1$}
\put(292,32){$0$}
\put(316,32){$0$}

\end{picture}
        \caption{Colors of the regions around vertices of $|D_L|$.}
        \label{fig:coloring_pair}
    \end{figure}
\end{lemma}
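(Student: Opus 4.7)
My plan is to reduce the identities to a component-by-component count along the $\mathcal{D}_2$-strands and then conclude via a closed-loop argument. First, by Proposition~\ref{prop:coloring_number}, the $X_2$-coloring $C$ corresponds to a decomposition $(\mathcal{D}_1,\mathcal{D}_2)$ of the components of $D_L$. A direct inspection of Figures~\ref{fig:crossing_case1}, \ref{fig:crossing_case2}, and \ref{fig:crossing_case3} shows that the color patterns depicted in Figure~\ref{fig:coloring_pair} occur at precisely the ``mixed'' crossings, those where one strand belongs to $\mathcal{D}_1$ and the other to $\mathcal{D}_2$. Consequently every crossing counted by $r_0,r_1,l_0,l_1$ lies on a unique component $\beta\in\mathcal{D}_2$, and it suffices to prove the restricted equalities $r_0^\beta=l_1^\beta$ and $r_1^\beta=l_0^\beta$ for each $\beta$ and then sum.

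Next I would fix $\beta\in\mathcal{D}_2$ and parameterise it as a closed oriented curve in $\mathbb{R}^2$. Because $\beta\in\mathcal{D}_2$, the color of the region immediately adjacent to $\beta$ is locally constant along each edge, and, as in the argument for Claim~\ref{claim:same_color}, it is preserved at every crossing of $\beta$ with another $\mathcal{D}_2$-strand and flipped at every crossing of $\beta$ with a $\mathcal{D}_1$-strand. Combined with the orientation of the $\mathcal{D}_1$-strand at each such crossing, this data determines which of the four types $r_0, r_1, l_0, l_1$ the crossing belongs to.

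The key observation is that the four patterns of Figure~\ref{fig:coloring_pair} are interchanged pairwise by a color-flipping and orientation-reversing symmetry: a crossing of type $r_0$ seen from one side of $\beta$ becomes a crossing of type $l_1$ after the ambient color flips, and likewise $r_1$ swaps with $l_0$. Since $\beta$ is a closed loop, the color state must return to its starting value after a full traversal, and the $\mathcal{D}_1$-crossings encountered along $\beta$ can therefore be grouped into pairs whose contributions are related by this involution. This yields the bijections needed for $r_0^\beta=l_1^\beta$ and $r_1^\beta=l_0^\beta$.

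I expect the main obstacle to lie in the local analysis: extracting, for each of the four patterns in Figure~\ref{fig:coloring_pair}, the precise combination of adjacent color and $\mathcal{D}_1$-strand orientation that characterises it, and then verifying that the color-flip involution interchanges $r_0$ with $l_1$ and $r_1$ with $l_0$. Once this local symmetry is established, the global identities follow by summing the per-$\beta$ equalities over all $\beta\in\mathcal{D}_2$, completing the proof.
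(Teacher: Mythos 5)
Your reduction to a single component $\beta \in \mathcal{D}_2$ is legitimate (the statement is in fact true component by component), and your opening observations are correct: by Proposition~\ref{prop:coloring_number} the coloring is some $C(a,\mathcal{D}_1,\mathcal{D}_2)$, the patterns of Figure~\ref{fig:coloring_pair} occur exactly at the mixed crossings, and the color adjacent to $\beta$ flips precisely at crossings with $\mathcal{D}_1$. The gap is in your final step. The closed-loop argument you invoke (``the color state must return to its starting value'') controls only the colors, not the crossing directions: it says that the transitions $0\to 1$ and $1\to 0$ alternate along $\beta$ and hence occur equally often, which is exactly one linear relation, $r_0^\beta + l_0^\beta = r_1^\beta + l_1^\beta$. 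It gives no information about whether the $\mathcal{D}_1$-strand at each crossing passes $\beta$ from right to left or from left to right, so the asserted ``grouping into pairs related by the involution'' does not follow; indeed, consecutive crossings along $\beta$ can both be $r$-type. Concretely, let $\beta$ be a round circle and let one component $\gamma$ of $\mathcal{D}_1$ meet it at four points whose order along $\gamma$ is $p_1,p_2,p_3,p_4$ (enter, exit, enter, exit) but whose cyclic order along $\beta$ is $p_1,p_3,p_2,p_4$ (realizable if $\gamma$ has a self-crossing inside $\beta$). Then the directions along $\beta$ read $r,r,l,l$ while the color transitions alternate, so pairing each $0\to1$ crossing with the adjacent $1\to0$ crossing matches an $r_1$ crossing with an $r_0$ crossing, which your involution (which swaps $r_1$ with $l_0$ and $r_0$ with $l_1$) does not relate. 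The counts still come out right in this example, but not for the reason you give; and asserting that \emph{some} involution-respecting grouping exists is just a restatement of the conclusion.

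What is missing is a second, independent conservation law: every component of $\mathcal{D}_1$ is a closed curve in the plane, so its algebraic intersection number with the closed curve $\beta$ vanishes, i.e.\ $\mathcal{D}_1$ crosses $\beta$ from right to left exactly as often as from left to right. This yields $r_0^\beta + r_1^\beta = l_0^\beta + l_1^\beta$. This is precisely how the paper argues (globally rather than per component): it establishes the two identities $r_0+r_1=l_0+l_1$ (balance of crossing directions) and $r_0+l_0=r_1+l_1$ (balance of color changes along $\mathcal{D}_2$), and then obtains $r_0=l_1$ and $r_1=l_0$ by adding and subtracting these equations. Note that the conclusion arises from an algebraic combination of two counting identities, not from any crossing-by-crossing bijection. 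To repair your write-up, keep your per-component framework if you like, but replace the involution step by these two identities.
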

\begin{proof}
By Proposition~\ref{prop:coloring_number},  
 there exists a decomposition $(\mathcal{D}_1, \mathcal{D}_2)$ of $\{D_1, \ldots, D_n\}$ and an element $a \in X_2$ such that $C = C(a, \mathcal{D}_1, \mathcal{D}_2)$. The horizontal segments in Figure~\ref{fig:coloring_pair} belong to $\mathcal{D}_1$, while the vertical segments belong to $\mathcal{D}_2$. The number of times that segments of $\mathcal{D}_1$ cross segments of $\mathcal{D}_2$ from right to left and from left to right are the same. Thus, we have $r_0 + r_1 = l_0 + l_1$.

On the other hand, the total number of times the colors of the regions on both sides of segments of $\mathcal{D}_2$ change from 0 to 1 and from 1 to 0 at crossings with $\mathcal{D}_1$ are the same. Thus, we have $r_0 + l_0 = r_1 + l_1$. The assertion follows from these two equations. 
\end{proof}

\subsection{Complementary colorings}
\label{subsec:Complementary colorings}

%\subsection{Properties on complementary colors}
Let $L$ be a link, $D_L$ a diagram of $L$, and $C : \mathcal{R}(D_L) \to X_2$ an $X_2$-coloring of $D_L$. 
Define a map $\overline{C}: \mathcal{R}(D_L) \to X_2$ by $\overline{C}(r) = C(r) + 1$ for each region $r$ of $D_L$. 
Since we have $[a+1, b+1, c+1] = a + b + c + 3 = [a, b, c] + 1$ for any $a, b, c \in X_2$, 
the map $\overline{C}$ thus defined is also an $X_2$-coloring of $D_L$. 
We call this coloring $\overline{C}$ the \emph{complementary coloring} of $C$.

In this section, we show that regardless of the commutative ring $R$ and 
the tribracket bracket $(A, B)$ with respect to $X_2$ and $R$, 
the quantum enhancement polynomial $\beta^{(A, B)}_{X_2}(D_L, C)$ 
coincides with $\beta^{(A, B)}_{X_2}(D_L, \overline{C})$.

\begin{lemma}\label{lem:tri_eq}
Let $R$ be a commutative ring, and let $(A, B)$ be a tribracket bracket with respect to $X_2$ and $R$. 
Then, the following equalities hold: 
    \begin{enumerate}%\renewcommand{\labelenumi}{(\ref{lem:tri_eq}-\arabic{enumi})}
        \item $A_{0,0,0} = A_{1,1,1}$ \label{tri_eq1}
        \item $B_{0,0,0} = B_{1,1,1}$ \label{tri_eq2}
        \item $A_{0,1,1} = A_{1,0,0}$ \label{tri_eq3}
        \item $B_{0,1,1} = B_{1,0,0} = B_{0,0,0}A_{0,0,0}^{-2}A_{0,1,1}^2$ \label{tri_eq4}
        \item $A_{0,0,1}A_{0,1,0} = A_{1,1,0}A_{1,0,1}$ \label{tri_eq5}
        \item $B_{0,0,1}A_{0,1,0} = B_{1,1,0}A_{1,0,1}$ \label{tri_eq6}
        \item $A_{0,1,0}B_{1,0,1} = A_{1,0,1}B_{0,1,0}$ \label{tri_eq7}
        \item $A_{0,0,1}B_{0,1,0} = A_{1,1,0}B_{1,0,1}$ \label{tri_eq8}
        \item $B_{0,0,1}B_{0,1,0} = B_{1,1,0}B_{1,0,1}$ \label{tri_eq9}
        \item $A_{0,0,1}B_{1,1,0} = A_{1,1,0}B_{0,0,1}$ \label{tri_eq10}
    \end{enumerate}
\end{lemma}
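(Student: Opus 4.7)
The plan is to derive each of the ten equalities by substituting particular choices of $a,b,c,d \in X_2 = \{0,1\}$ into the defining axioms of a tribracket bracket (Definition~\ref{def:tribracketbracket}). Since the tribracket operation is $[a,b,c] = a+b-c \pmod 2$, every subscript that appears after substitution is immediately a concrete element of $\{0,1\}$, so each of the axioms (3a)--(3e) specializes to an explicit identity among the eight $A$-values and eight $B$-values. In almost every case the resulting identity contains a common unit factor on both sides which can be cancelled, leaving exactly one of the ten equalities.

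I would first dispose of the diagonal identities (1) and (2). Setting $a=b=c=0,\ d=1$ in axiom (3a), the induced arguments evaluate to $[0,0,0]=0$ and $[0,0,1]=1$, and the axiom collapses to $A_{0,0,0} A_{0,0,1}^2 = A_{0,0,1}^2 A_{1,1,1}$; since $A_{0,0,1} \in R^\times$, cancellation gives (1). Equality (2) then follows by comparing the $w$-identity $w = -A_{a,b,b}^2 B_{a,b,b}^{-1}$ for $(a,b)=(0,0)$ and $(a,b)=(1,1)$ and applying (1). The ``swapped diagonal'' identities are handled in the same spirit: substituting $a=0,\ b=c=d=1$ in (3a) reduces to $A_{0,1,1}^2 A_{1,0,0} = A_{1,0,0}^2 A_{0,1,1}$, proving (3), and (4) follows by combining (3) with the $w$-identity for the four pairs $(a,b) \in \{0,1\}^2$.

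For the mixed identities I would use: $(a,b,c,d)=(1,0,1,0)$ in (3a) to produce (5) after cancelling $A_{1,0,0}$; $(a,b,c,d)=(0,1,0,1)$ in (3b) to produce (6) after cancelling $B_{0,1,1}$; $(a,b,c,d)=(0,1,1,0)$ in (3c) to produce (7) after cancelling $B_{0,1,1}$; and $(a,b,c,d)=(0,1,0,1)$ in (3c) to produce (8) after cancelling $B_{0,1,1}$. Equalities (9) and (10) are then purely algebraic consequences of the earlier items: multiplying (6) with (7) and cancelling $A_{0,1,0} A_{1,0,1}$ yields (9), while dividing (5) by (6) yields $A_{0,0,1}/B_{0,0,1} = A_{1,1,0}/B_{1,1,0}$, which is (10). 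Note that the axioms (3d) and (3e), which involve the sum parameter $\delta$, are not needed for this lemma.

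The main obstacle is not conceptual but combinatorial: for each axiom one must hunt for the unique $(a,b,c,d) \in X_2^4$ that specializes to a non-trivial equation rather than one of the form $X^3 = X^3$, and one must carefully track how $[a,b,c]$, $[a,c,d]$, $[a,b,d]$ reduce modulo $2$ so as to identify which unit cancels. Because there are only $|X_2^4| = 16$ substitutions per axiom and five axioms to try, this is a small finite search, and the bookkeeping is the only realistic source of error.
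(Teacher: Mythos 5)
Your proposal is correct and follows essentially the same route as the paper's proof: specialize the axioms (3a)--(3c) of Definition~\ref{def:tribracketbracket} at well-chosen tuples $(a,b,c,d) \in X_2^4$, cancel the unit factors, use condition (\ref{2}) for the identities involving $B_{0,1,1}$, $B_{1,0,0}$, and derive (\ref{tri_eq9}) and (\ref{tri_eq10}) algebraically from the earlier items. Your particular substitutions differ in places from the paper's --- you obtain (\ref{tri_eq2}) from the $w$-identity together with (\ref{tri_eq1}) rather than from axiom (\ref{3b}), and your choices for (\ref{tri_eq6})--(\ref{tri_eq8}) yield those equalities directly without routing through (\ref{tri_eq5}) --- but each substitution checks out, so this is merely a different (and in a few spots slightly cleaner) walk through the same finite search.
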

\begin{proof}
We prove each statement of the lemma in turn: 
  \begin{enumerate}%\renewcommand{\labelenumi}{(\ref{lem:tri_eq}-\arabic{enumi})}
    \item        
    Substituting $(a, b, c, d) = (0,0,0,1)$ into equation (\ref{3a})  in Definition~\ref{def:tribracketbracket},   
    we obtain $A_{0,0,0}A_{0,0,1}A_{0,0,1} = A_{0,0,1}A_{0,0,1}A_{1,1,1}$. 
     Hence, we have $A_{0,0,0} = A_{1,1,1}$. 
\item Substituting $(a, b, c, d) = (1,0,1,1)$ into equation (\ref{3b}) in Definition~\ref{def:tribracketbracket}, we obtain $A_{1,0,1}B_{1,1,1} = B_{0,0,0}A_{1,0,1}$. Thus, we have $B_{0,0,0} = B_{1,1,1}$.
    \item 
    Substituting $(a, b, c, d) = (0,1,1,1)$ into equation (\ref{3a}), we get $A_{0,1,1} A_{1,0,0} = A_{1,0,0} A_{0,1,1}$. Thus, we conclude that $A_{0,1,1} = A_{1,0,0}$.
    \item 
    By the condition (\ref{2}) in Definition~\ref{def:tribracketbracket}, we have $-A_{0,0,0}^2 B_{0,0,0}^{-1} = -A_{0,1,1}^2 B_{0,1,1}^{-1}$. 
    Therefore, $B_{0,1,1} = A_{0,0,0}^{-2} B_{0,0,0} A_{0,1,1}^2$. 
    Similarly, from the same equation, we get $B_{1,0,0} = A_{0,0,0}^{-2}B_{0,0,0}A_{1,0,0}^2$. 
    Since $A_{0,1,1} = A_{1,0,0}$ by (\ref{tri_eq3}), we have $B_{0,1,1} = B_{1,0,0}$.
        \item 
    Substituting $(a, b, c, d) = (0,1,0,1)$ into equation (\ref{3a}) in Definition~\ref{def:tribracketbracket},  
    we get $A_{0,1,0}A_{0,1,1}A_{0,0,1} = A_{1,1,0}A_{0,1,1}A_{1,0,1}$. 
     Thus, we have $A_{0,0,1}A_{0,1,0} = A_{1,1,0}A_{1,0,1}$. 
    \item 
    Substituting $(a, b, c, d) = (0,0,1,1)$ into equation (\ref{3b}) in Definition~\ref{def:tribracketbracket},  
    we get $B_{0,1,0}B_{0,1,1}A_{0,0,1} = A_{1,1,0}B_{0,1,1}B_{1,0,1}$. 
    Since $A_{1,1,0} = A_{0,0,1}A_{0,1,0}A_{1,0,1}^{-1}$ by (\ref{tri_eq5}), we have $B_{0,0,1}A_{0,1,0} = B_{1,1,0}A_{1,0,1}$. 
    \item 
    Substituting $(a, b, c, d) = (0,1,0,1)$ into equation (\ref{3c}) in Definition~\ref{def:tribracketbracket},  
    we get $A_{0,1,0}A_{0,1,1}A_{0,0,1} = A_{1,1,0}A_{0,1,1}A_{1,0,1}$. 
   Since $A_{1,1,0} = A_{0,0,1}A_{0,1,0}A_{1,0,1}^{-1}$ by (\ref{tri_eq5}), we have $A_{0,1,0}B_{1,0,1} = A_{1,0,1}B_{0,1,0}$. 
    \item 
    By equation (\ref{tri_eq5}), 
    we have $A_{0,1,0} = A_{1,1,0}A_{1,0,1}A_{0,0,1}^{-1}$. 
    Substituting this into equation (\ref{tri_eq7}), we get $A_{0,0,1}B_{0,1,0} = A_{1,1,0}B_{1,0,1}$. 
    \item 
    By equation (\ref{tri_eq7}), 
    we have $A_{0,0,1}B_{0,1,0} = A_{1,1,0}B_{1,0,1}$. 
    Substituting this into equation (\ref{tri_eq7}), we get $B_{0,0,1}B_{0,1,0} = B_{1,1,0}B_{1,0,1}$. 
    \item 
    By equation (\ref{tri_eq5}), 
    we have $A_{0,1,0} = A_{1,1,0}A_{1,0,1}A_{0,0,1}^{-1}$. 
    Substituting this into equation (\ref{tri_eq6}), we get $A_{0,0,1}B_{1,1,0} = A_{1,1,0}B_{0,0,1}$. 
    \end{enumerate}
\end{proof}

\begin{proposition}\label{prop:comp_color}
Let $R$ be a commutative ring, and let $(A, B)$ be a tribracket bracket with respect to $X_2$ and $R$. 
Let $D_L$ be a diagram of a link $L$. 
Then, for any $X_2$-coloring $C$ of $D_L$, we have
$\beta^{(A, B)}_{X_2}(D_L, C) = \beta^{(A, B)}_{X_2}(D_L, \overline{C})$. 
\end{proposition}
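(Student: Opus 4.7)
The strategy is to prove the equality state by state in the definition of $\beta$. Since the smoothings at each crossing depend only on $s(c)$ and $\sign(c)$, the number of loops $k_s$ coincides for $C$ and $\overline{C}$. Hence it suffices to show, for every $s \in \mathcal{S}(D_L)$,
\[
\prod_{c \in \mathcal{C}(D_L)} s(c)(\cl_C(c))^{\sign(c)} = \prod_{c \in \mathcal{C}(D_L)} s(c)(\cl_{\overline{C}}(c))^{\sign(c)}.
\]
Since $\overline{C}$ flips every region color, $\cl_{\overline{C}}(c) = \cl_C(c) + (1,1,1)$ in $X_2^3$, so the ratio of the two products factorizes as $\prod_c \mu_{\cl_C(c)}^{\sign(c)}$, where $\mu_{(x,y,z)} := s(c)(x+1,y+1,z+1) \cdot s(c)(x,y,z)^{-1}$.

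The next step is to read Lemma~\ref{lem:tri_eq} multiplicatively. Parts (1)--(4) assert that $\mu_{(x,y,z)} = 1$ whenever $y = z$, i.e., whenever $(x,y,z) \in \{(0,0,0), (1,1,1), (0,1,1), (1,0,0)\}$. By Proposition~\ref{prop:coloring_number}, these are exactly the triples arising at crossings whose underlying strands lie in the same part of the decomposition $(\mathcal{D}_1, \mathcal{D}_2)$ associated with $C$. Parts (5)--(10) show that at the remaining ``mixed'' crossings (where $y \neq z$) we have $\mu_{(x,y,z)} = \lambda^{\pm 1}$ for a single common element $\lambda := A_{0,0,1}/A_{1,1,0}$, with the sign depending only on $(x,y,z)$ and \emph{not} on whether $s(c) = A$ or $s(c) = B$; explicitly, $\mu = \lambda$ when $y = 1$ and $\mu = \lambda^{-1}$ when $y = 0$. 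Denoting this sign by $\eta(c) \in \{\pm 1\}$, the problem reduces to proving
\[
\sum_{c \text{ mixed}} \eta(c)\,\sign(c) = 0.
\]

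To establish this identity, I would observe from Figure~\ref{fig:1} that the labels $y_c$ and $z_c$ interchange positions between positive and negative crossings. Consequently, flipping the over/under information at a mixed crossing simultaneously flips $\sign(c)$ and $\eta(c)$ (the latter because $y_c$ and $z_c$ swap roles and $y_c + z_c = 1$ at a mixed crossing), leaving $\eta(c)\,\sign(c)$ invariant. Hence $\eta(c)\,\sign(c)$ is determined by the oriented projection $|D_L|$ together with the coloring $C$, independent of the over/under data. I then partition the mixed crossings of $|D_L|$ according to the four local color-and-orientation patterns depicted in Figure~\ref{fig:coloring_pair}, identify which of these patterns contribute $+1$ or $-1$ to the sum, and invoke the count equalities $r_0 = l_1$ and $r_1 = l_0$ of Lemma~\ref{lem:coloring_pair} to conclude the desired cancellation.

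The principal technical hurdle is this final bookkeeping step: one must align the conventions of Figure~\ref{fig:1} (where the triple $(x_c, y_c, z_c)$ is assigned) with those of Figure~\ref{fig:coloring_pair} (where the four local models are depicted) carefully enough that the contributions $\eta(c)\,\sign(c)$ group into pairs matched by Lemma~\ref{lem:coloring_pair}.
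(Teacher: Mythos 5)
Your proposal is correct, and its skeleton matches the paper's: a state-by-state comparison, the decomposition of crossings supplied by Proposition~\ref{prop:coloring_number}, Lemma~\ref{lem:tri_eq}~(1)--(4) for crossings whose two strands lie in the same part of the decomposition, and Lemma~\ref{lem:tri_eq}~(5)--(10) together with Lemma~\ref{lem:coloring_pair} for the mixed crossings. Where you genuinely depart from the paper is in how the mixed crossings are handled. The paper pairs them up explicitly (Figure~\ref{fig:ci_ci1}) --- this pairing is what $r_0=l_1$, $r_1=l_0$ are invoked for --- and then verifies invariance of each pair's contribution by a case analysis over the sign pattern and over $(s(c_i),s(c_{i+1}))\in\{A,B\}^2$. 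You instead compress Lemma~\ref{lem:tri_eq}~(5)--(10) into the single multiplicative statement $\mu_{(x,y,z)}=\lambda^{\eta}$ with $\lambda=A_{0,0,1}A_{1,1,0}^{-1}$ and $\eta\in\{\pm1\}$ depending only on $y$ (the same for $A$ as for $B$), so the entire discrepancy between the two products is $\lambda^{N}$ with $N=\sum_{c\ \mathrm{mixed}}\eta(c)\sign(c)$, and the proof reduces to the integer identity $N=0$. This is a real economy: the paper's many cases collapse to one computation, and the combinatorial input is cleanly isolated. I can confirm your final bookkeeping step closes, and in a slightly sharper form than you anticipated: with the conventions of Figure~\ref{fig:1} (your observation that $y_c$ and $z_c$ trade places under a crossing switch is exactly right), one finds that at a mixed crossing $\eta(c)\sign(c)=+1$ precisely when the common color of the two regions flanking the $\mathcal{D}_2$-strand changes from $1$ to $0$ as that strand passes the crossing in its own direction, and $-1$ when it changes from $0$ to $1$; hence $N$ is the difference between the numbers of $1\to0$ and $0\to1$ color changes along the closed curves of $\mathcal{D}_2$, which is the identity $r_1+l_1=r_0+l_0$ --- a consequence of (in fact strictly weaker than) the conclusion $r_0=l_1$, $r_1=l_0$ of Lemma~\ref{lem:coloring_pair}, being one of the two counting facts inside that lemma's proof. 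The one point of genuine care, which you correctly flagged as the hurdle: $\eta(c)\sign(c)$ is convention-sensitive, and with a plausible-looking misreading of which region in Figure~\ref{fig:1} is $x_c$ the sum comes out instead as $\pm2(r_0-l_0)$, which does not vanish (the positive Hopf link with a mixed coloring already defeats it). A reliable cross-check that pins down the intended convention is Proposition~\ref{prop: torus_link}: for $T(2,2)$ with a mixed coloring the two crossings must receive triples $(a,b,c)$ and $(a,c,b)$, sharing their first coordinate, and with that reading your count vanishes as claimed.
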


\begin{proof}
We will show that for any state $s : \mathcal{C}(D_L) \to \{A, B\}$ of $D_L$, the equality
\[\prod_{c \in \mathcal{C}(D_L)} s(c)(\cl_C(c))^{\sign (c)} = \prod_{c \in \mathcal{C}(D_L)} s(c)(\cl_{\overline{C}}(c))^{\sign(c)}\] 
holds. 
Let $D_L = D_1 \cup \cdots \cup D_n$. 
By Proposition~\ref{prop:coloring_number},  
there exists a decomposition $(\mathcal{D}_1, \mathcal{D}_2)$ of $\{D_1, \ldots, D_n\}$ and an element $a \in X_2$ such that $C = C(a, \mathcal{D}1, \mathcal{D}2)$. Let $\mathcal{C}(D_L) = \{c_1, \ldots, c_m\}$, where each $c_i$ is a crossing of $D_{i_1}$ and $D_{i_2}$.

Let $m'$ denote the number of crossings in $|D_L|$ where the regions are colored as shown in Figure~~\ref{fig:coloring_pair}. 
By Lemma~\ref{lem:coloring_pair}, we can assume (by swapping the subscripts if necessary) that crossings of $D_L$ satisfy the following conditions: 
\begin{enumerate}\renewcommand{\labelenumi}{(\ref{prop:comp_color}-\alph{enumi})}
\item 
For any odd $i\ (1 \leq i \leq m'-1)$, 
the regions around $c_i$ and $c_{i+1}$ are colored as in either of the models shown Figure~\ref{fig:ci_ci1}, 
up to over/under information. 
    \begin{figure}[htpb]
        \centering\includegraphics[width=1.2\textwidth]{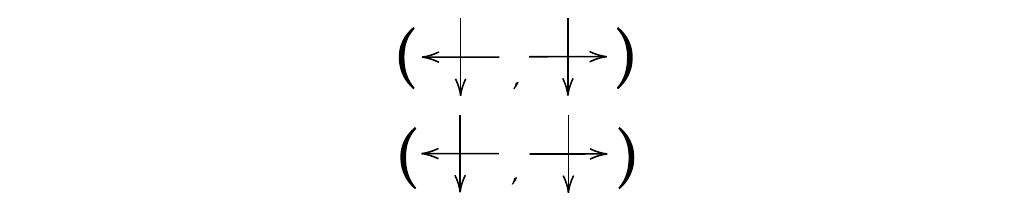}
\begin{picture}(400,0)(0,0)

\put(120,87){$(c_i, c_{i+1}) = $}

\put(200,100){$0$}
\put(224,100){$0$}
\put(200,75){$1$}
\put(224,75){$1$}

\put(253,100){$1$}
\put(277,100){$1$}
\put(253,75){$0$}
\put(277,75){$0$}

\put(120,42){$(c_i, c_{i+1}) = $}

\put(200,55){$0$}
\put(224,55){$0$}
\put(200,30){$1$}
\put(224,30){$1$}

\put(253,55){$1$}
\put(277,55){$1$}
\put(253,30){$0$}
\put(277,30){$0$}

\end{picture}
        \caption{Colors of the regions around $c_i$ and $c_{i+1}$.}
        \label{fig:ci_ci1}
    \end{figure}
\item 
For any integer $i\ (m'+1 \leq i \leq m)$, the regions around $c_i$ are colored as in either of the models shown Figures~\ref{fig:crossing_case1} and \ref{fig:crossing_case2} up to over/under information. 
    \end{enumerate}
We first show that the equality
$s(c_i)(\cl_C(c_i))^{\sign(c_i)} = s(c_i)(\cl_{\overline{C}}(c_i))^{\sign(c_i)}$ 
holds for any crossing $c_i \ (m'+1 \leq i \leq m)$. 
For a crossing $c_i \ (m'+1 \leq i \leq m)$, if $D_{i_1}, D_{i_2} \in \mathcal{D}_1$, 
the colors of regions around $c_i$ by $C$ or $C'$ is as in Figure~\ref{fig:crossing_case1}. 
Since $A_{0,1,1} = A_{1,0,0}$ and $B_{0,1,1} = B_{1,0,0}$ by Lemma~\ref{lem:tri_eq}, we have  
$s(c_i)(\cl_C(c_i))^{\sign(c_i)} = s(c_i)(\cl_{\overline{C}}(c_i))^{\sign(c_i)}$. 
If $D_{i_1}, D_{i_2} \in \mathcal{D}_2$, 
the colors of regions around $c_i$ by $C$ or $C'$ is as in Figure~\ref{fig:crossing_case2} . 
Since $A_{0,0,0} = A_{1,1,1}$ and $B_{0,0,0} = B_{1,1,1}$ by Lemma~\ref{lem:tri_eq}, we have 
$s(c_i)(\cl_C(c_i))^{\sign(c_i)} = s(c_i)(\cl_{\overline{C}}(c_i))^{\sign(c_i)}$. 

Next, we show that the equality 
\begin{align*}
s(c_{i})(\cl_C(c_i))^{\sign(c_i)}s(c_{i+1})(\cl_C(c_{i+1}))^{\sign(c_{i+1})} = s(c_i)(\cl_{\overline{C}}(c_i))^{\sign(c_i)}s(c_{i+1})(\cl_{\overline{C}}(c_{i+1}))^{\sign(c_{i+1})}
\end{align*} 
holds for any odd $i\ (1 \leq i \leq m'-1)$. 
Assume the colors of regions around $c_i, c_{i+1}$ are as in the top of Figure~\ref{fig:ci_ci1}. 
The same proof applies to the case where the regions are colored as in the bottom of Figure~\ref{fig:ci_ci1}. 
    \begin{itemize}
        \item 
        If $\sign(c_i) = \sign(c_{i+1}) = 1$, 
        then by equations (\ref{lem:tri_eq}-\ref{tri_eq5}), (\ref{lem:tri_eq}-\ref{tri_eq6}), (\ref{lem:tri_eq}-\ref{tri_eq8}), (\ref{lem:tri_eq}-\ref{tri_eq9}) in 
        Lemma~\ref{lem:tri_eq}, we have 
        \[s(c_i)(0,0,1)s(c_{i+1})(0,1,0) = s(c_i)(1,1,0)s(c_{i+1})(1,0,1)\] 
        regardless of the values of $s(c_i), s(c_{i+1})$, 
        which implies the assertion. 
        \item 
        If $\sign(c_i) = 1$ and $\sign(c_{i+1}) = -1$:  
            \begin{itemize}
                \item If $s(c_i) = s(c_{i+1})$, we have 
                \[ s(c_i)(0,0,1)s(c_{i+1})(0,0,1)^{-1} = 1 =s(c_i)(1,1,0)s(c_{i+1})(1,1,0)^{-1},\] 
                so the equality holds. 
                \item If $s(c_i) = A$ and $s(c_{i+1}) = B$, it follows from the equation (\ref{lem:tri_eq}-\ref{tri_eq10}) in Lemma~\ref{lem:tri_eq} that 
                \begin{align*}
                s(c_i)(0,0,1)s(c_{i+1})(0,0,1)^{-1} &= A_{0,0,1}B_{0,0,1}^{-1} = A_{1,1,0}B_{1,1,0}^{-1} \\
                &=s(c_i)(1,1,0)s(c_{i+1})(1,1,0)^{-1},
                \end{align*} 
                which proves the assertion. The same argument applies when $s(c_i) = B$ and $s(c_{i+1}) = A$. 
            \end{itemize}
        \item If $\sign(c_i) = -1$ and $\sign(c_{i+1}) = 1$, the proof is analogous to the case where $\sign(c_i) = 1$ and $\sign(c_{i+1}) = -1$, 
      using equation (\ref{tri_eq7}) instead of (\ref{tri_eq10}). 
        \item If $\sign(c_i) = \sign(c_{i+1}) = -1$,  
        the proof is analogous to the case where $\operatorname{sign}(c_i) = \operatorname{sign}(c_{i+1}) = 1$.
    \end{itemize}
Thus, we obtain
    \begin{align*}
    &\prod_{i=1}^m s(c_i)(\cl_C(c_i))^{\sign(c_i)}\\ 
    &= \left( \prod_{i \in \{1,3,\ldots, m'-1\}} s(c_i)(\cl_C(c_i))^{\sign(c_i)}s(c_{i+1})(\cl_C(c_{i+1}))^{\sign(c_{i+1})}\right) \left(\prod_{i=m'+1}^m s(c_i)(\cl_C(c_i))^{\sign(c_i)}\right)\\
    &= \left(\prod_{i \in \{1,3,\ldots, m'-1\}} s(c_i)(\cl_{\overline{C}}(c_i))^{\sign(c_i)}s(c_{i+1})(\cl_{\overline{C}}(c_{i+1}))^{\sign(c_{i+1})}\right) \left(\prod_{i=m'+1}^m s(c_i)(\cl_{\overline{C}}(c_i))^{\sign(c_i)}\right)\\
    &= \prod_{i=1}^m s(c_i)(\cl_{\overline{C}}(c_i))^{\sign(c_i)} . 
    \end{align*}
Therefore, we conclude that $\beta^{(A, B)}{X_2}(D_L, C) = \beta^{(A, B)}{X_2}(D_L, \overline{C})$. \
\end{proof}

\subsection{Mirror images}
Let $D_L$ be a diagram of a link $L$. 
Let $L^*$ denotes the mirror image of $L$. 
By $D^*_L$ we denote the diagram of $L^*$ obtained by changing all the crossings of $D_L$. 
Let $R$ be a commutative ring, and $(A, B)$ a tribracket bracket with respect to 
$X_2$ and $R$. 
Define maps $\overline{A}, \overline{B} : X_2^3 \to R^\times$ by 
$\overline{A}(a, b, c) = A_{a, b, c}^{-1}, \overline{B}(a, b, c) = B_{a, b, c}^{-1}$ for each $a, b, c \in X_2$. 

It can be verified that $(\overline{A}, \overline{B})$ satisfies the conditions~\ref{1} and \ref{2} of Definition~\ref{def:tribracketbracket} as follows.
Let $\delta \in R$ be the element satisfying $\delta = - A_{a, b, c}B_{a, b, c}^{-1} - A_{a, b, c}^{-1}B_{a, b, c}$ for any $a, b, c \in X_2$, 
and let $w$ be the distinguished element of $(A, B)$. 
Then, for any $a, b, c \in X_2$, we have 
$- \overline{A}_{a, b, c}\overline{B}_{a, b, c}^{-1} - \overline{A}_{a, b, c}^{-1}\overline{B}_{a, b, c} =  - A_{a, b, c}^{-1}B_{a, b, c} - A_{a, b, c}B_{a, b, c}^{-1} = \delta$. 
Thus, the pair $(\overline{A}, \overline{B})$ satisfies the condition~\ref{1} of \ref{def:tribracketbracket}, and we put $\overline{\delta} = \delta$. 
Furthermore, since the equality $- \overline{A}_{a, b, b}^2\overline{B}_{a, b, b}^{-1} = - A_{a, b, b}^{-2}B_{a, b, b} = w^{-1}$ holds for any $a, b \in X_2$, 
the pair $(\overline{A}, \overline{B})$ satisfies the condition~\ref{2} of \ref{def:tribracketbracket}, and we put $\overline{w} = w^{-1}$. 
It is straightforward to see that $(\overline{A}, \overline{B})$ satisfies the equalities~\ref{3a}--\ref{3c}, however, 
At present we do not know whether it satisfies the equalities~\ref{3d} and \ref{3e} in general. 
Namely, we do not know whether $(\overline{A}, \overline{B})$ is a tribracket bracket for a given tribracket bracket $(\overline{A}, \overline{B})$. 

Nevertheless, it is still possible to define an element $\beta_{X_2}^{(\overline{A}, \overline{B})}(D_L, C) \in R$ for 
each $X_2$-coloring $C$ of $D_L$ as follows: 
\[
    \beta_{X_2}^{(\overline{A}, \overline{B})}(D_L, C) = \overline{w}^{n-p} \sum_{{\overline{s}} \in \mathcal{S}(D_L)} \overline{\delta}^{k_{\overline{s}}} \prod_{c \in \mathcal{C}(D_L)} \overline{s}(c)(cl_C(c))^{\text{sign}(c)}, 
\]
where recall that $p$ and $n$ denote the numbers of positive and negative crossings of $D_L$, respectively, 
and for each state ${\overline{s}} : \mathcal{C}(D_L) \to \{\overline{A}, \overline{B}\}$, 
$k_{\overline{s}}$ denotes the number of circles in the diagram 
obtained by smoothing each crossing as in Table~\ref{tab:smoothing}. 

\begin{remark}
  Since $(\overline{A}, \overline{B})$ satisfies the conditions~\ref{1} and \ref{2} of Definition~\ref{def:tribracketbracket}, 
  we see that $\beta_{X_2}^{(\overline{A}, \overline{B})}(D_L, C)$ is invariant under the Reidemeister moves I and II. 
  However, we do not know whether $\beta_{X_2}^{(\overline{A}, \overline{B})}(D_L, C)$ is invariant under the Reidemeister move III since 
  we do not know whether $(\overline{A}, \overline{B})$ satisfies the equalities~\ref{3d} and \ref{3e} in Definition~\ref{def:tribracketbracket}. 
  If $(A, B)$ satisfies $A_{0,0,1}B_{0,1,0} = B_{0,0,1}A_{0,1,0}$ and $A_{1,1,0}B_{1,0,1} = B_{1,1,0}A_{1,0,1}$, then by 
  Proposition~\ref{prop:mirror} given below, we see that $\beta_{X_2}^{(\overline{A}, \overline{B})}(D_L, C)$ is actually invariant under 
  the Reidemeister move III as well. 
\end{remark}

\begin{proposition}
\label{prop:mirror}
Let $R$ be a commutative ring, and $(A, B)$ a tribracket bracket with respect to 
$X_2$ and $R$ with $A_{0,0,1}B_{0,1,0} = B_{0,0,1}A_{0,1,0}$ and $A_{1,1,0}B_{1,0,1} = B_{1,1,0}A_{1,0,1}$. 
Let $D_L$ be a diagram of a link $L$. 
Then, for any $X_2$-coloring $C$ of $D_L$, we have 
$\beta^{(\overline{A}, \overline{B})}_{X_2}(D_L, C) = \beta^{(A, B)}_{X_2}(D^*_L, C)$. 
\end{proposition}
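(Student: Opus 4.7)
The plan is to expand both sides as state sums via Definition~\ref{def:beta} and show equality termwise under the natural identification of states, using the two hypotheses to reduce the problem to a single diagrammatic correction that then cancels by Lemma~\ref{lem:coloring_pair}.

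Since the crossings of $D_L$ and $D_L^*$ coincide as subsets of $\RR^2$, any state $\overline{s}:\mathcal{C}(D_L)\to\{\overline{A},\overline{B}\}$ may be identified with a state $s:\mathcal{C}(D_L^*)\to\{A,B\}$ via $s(c)=A\Leftrightarrow\overline{s}(c)=\overline{A}$. Since $\overline{w}=w^{-1}$, $\overline{\delta}=\delta$, and mirroring interchanges the numbers of positive and negative crossings, the overall prefactors $\overline{w}^{n-p}$ and $w^{n(D_L^*)-p(D_L^*)}=w^{p-n}$ coincide. In the convention of Table~\ref{tab:smoothing}, the geometric smoothing at each crossing is determined by the state label alone, independently of the crossing sign; so the identified states produce the same planar $1$-manifold and $k_{\overline{s}}=k_s$, making the $\delta^k$ contributions match termwise.

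It remains to compare the weight products. Inspecting Figure~\ref{fig:1}, a crossing of sign $\epsilon$ in $D_L$ with color triple $\cl_C^{D_L}(c)=(a,b,e)$ has color triple $\cl_C^{D_L^*}(c)=(a,e,b)$ in $D_L^*$; the second and third entries swap when the sign is reversed. Combined with $\overline{A}=A^{-1}$ and $\sign_{D_L^*}(c)=-\sign_{D_L}(c)$, one computes
\[
\overline{s}(c)\bigl(\cl_C^{D_L}(c)\bigr)^{\sign_{D_L}(c)}
=\left(\frac{A_{a,e,b}}{A_{a,b,e}}\right)^{\sign_{D_L}(c)}\! s(c)\bigl(\cl_C^{D_L^*}(c)\bigr)^{\sign_{D_L^*}(c)}
\]
when $\overline{s}(c)=\overline{A}$, and the same formula with $A$ replaced throughout by $B$ when $\overline{s}(c)=\overline{B}$. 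By Lemma~\ref{lem:tri_eq} the ratio $A_{a,e,b}/A_{a,b,e}$ equals $1$ on the symmetric triples $(0,0,0),(0,1,1),(1,0,0),(1,1,1)$, while the proposition's hypotheses say exactly that $A_{a,e,b}/A_{a,b,e}=B_{a,e,b}/B_{a,b,e}$ on the two swap-pairs $\{(0,0,1),(0,1,0)\}$ and $\{(1,0,1),(1,1,0)\}$. The correction is therefore state-independent, and
\[
\frac{\beta_{X_2}^{(\overline{A},\overline{B})}(D_L,C)}{\beta_{X_2}^{(A,B)}(D_L^*,C)}
=\prod_{c\in\mathcal{C}(D_L)}\left(\frac{A_{\cl_C(c)^{\ast}}}{A_{\cl_C(c)}}\right)^{\sign_{D_L}(c)},
\]
where $(a,b,e)^{\ast}=(a,e,b)$.

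The final step is to show this diagrammatic product equals $1$ for every $X_2$-colored link diagram. The symmetric-color crossings contribute factors of $1$ automatically, so only the mixed crossings depicted in Figure~\ref{fig:crossing_case3} matter. Lemma~\ref{lem:coloring_pair} furnishes an exact pairing between crossings of $|D_L|$ whose underlying region patterns are swap-related, and once the crossing signs are properly accounted for, the ratios $A_{0,0,1}/A_{0,1,0}$ and $A_{1,1,0}/A_{1,0,1}$ telescope to $1$. The main obstacle is precisely this last bookkeeping: Lemma~\ref{lem:coloring_pair} pairs unsigned crossings, and one must carefully check that its pairing respects the crossing signs of $D_L$ in the manner required for the cancellation.
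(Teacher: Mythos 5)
Your reduction is sound where it is complete, and it is genuinely slicker than the paper's treatment of the same step: the paper also identifies states label-by-label, matches the prefactors, and handles the symmetric-triple crossings exactly as you do, but it then disposes of the mixed crossings by pairing them via Lemma~\ref{lem:coloring_pair} and running an exhaustive case analysis over all sign and state combinations for each pair, whereas you isolate a single correction factor and use the two hypotheses to make it state-independent (that part of your computation is correct). The problem is that your proof stops exactly where you say it does: you never prove $\prod_{c}\bigl(A_{\cl_C(c)^{\ast}}/A_{\cl_C(c)}\bigr)^{\sign(c)}=1$, and after your reduction this identity \emph{is} the proposition, so the argument does not close. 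The bookkeeping you flag as the obstacle is in fact vacuous, and this is the observation you are missing: reversing the sign of a crossing while keeping its unsigned colored pattern swaps the second and third entries of its color triple (Figure~\ref{fig:1}) \emph{and} flips the exponent $\sign(c)$, and these two effects cancel. Hence the correction at a mixed crossing depends only on which of the four local models of Figure~\ref{fig:coloring_pair} it realizes, and equals $A_{0,1,0}/A_{0,0,1}$, $A_{0,0,1}/A_{0,1,0}$, $A_{1,1,0}/A_{1,0,1}$ or $A_{1,0,1}/A_{1,1,0}$ accordingly. The equalities $r_0=l_1$ and $r_1=l_0$ of Lemma~\ref{lem:coloring_pair} equate the counts of models carrying mutually inverse corrections (that the pairing matches inverses is the same finite check the paper performs in the positive--positive case of Figure~\ref{fig:ci_ci1}), so the product is $1$ with no reference to crossing signs at all. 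This is a short fix, but it is missing.

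There is also a genuine error in your justification of $k_{\overline{s}}=k_s$. The claim that in Table~\ref{tab:smoothing} the smoothing is determined by the state label alone, independently of the crossing sign, is false, and the paper's own Appendix shows it: for the all-positive standard diagram of $T(2,q)$ the circle count is governed by $|s^{-1}(A)|$ (the all-$A$ state gives $2$ circles), while for the all-negative diagram it is governed by $|s^{-1}(B)|$ (the all-$B$ state gives $2$ circles); that is, the $A$-smoothing of a positive crossing is the $B$-smoothing of a negative one. Taken literally, your reading would make the statement you are proving false: for $T(2,4)$ with the constant bracket $A\equiv x$, $B\equiv x^{-1}$ (which satisfies the hypotheses), the all-$\overline{A}$ state would have $k=2$ while the corresponding all-$A$ state on the mirror has $k=4$, and the two state sums genuinely differ. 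What makes the step correct is the other reading of $\beta_{X_2}^{(\overline{A},\overline{B})}$, which is the one the paper must intend (its own proof asserts $k_{\overline{s}}=k_{s^*}$ without comment): a crossing whose coefficient is a given power of $A$ or $B$ is smoothed the way Table~\ref{tab:smoothing} smooths that same power, so the label $\overline{A}$ at a sign-$\epsilon$ crossing uses the $(A,-\epsilon)$ entry. Under that convention your identification of states preserves the smoothed diagrams on the nose and the rest of your reduction (which never touches the table) is unaffected; under the convention you actually wrote down, the $\delta$-exponents fail to match and the proof collapses at this step.
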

\begin{proof}
Set  $D_L = D_1 \cup \cdots \cup D_n$. 
By Proposition~\ref{prop:coloring_number},  
there exists a decomposition $(\mathcal{D}_1, \mathcal{D}_2)$ of $\{D_1, \ldots, D_n\}$ and an element $a \in X_2$ with 
$C = C(a, \mathcal{D}_1, \mathcal{D}_2)$. 
Let $\mathcal{C}(D_L) = \{c_1, \ldots, c_m\}$ and $\mathcal{C}(D^*_L) = \{c^*_1, \ldots, c^*_m\}$, where 
$c^*_i$ is obtained by changing the crossing $c_i$. 
Note that if $p$ and $n$ are the numbers of positive and negative crossings of $D_L$, respectively, then 
the number of positive and negative crossings of $D^*_L$ are $n$ and $p$, respectively. 
Thus, it suffices to show that 
    \[
    \overline{w}^{n-p} \sum_{{\overline{s}} \in \mathcal{S}(D_L)} \overline{\delta}^{k_{\overline{s}}} \prod_{i = 1}^{m} {\overline{s}}(c_i)(\cl_C(c_i))^{\sign(c_i)}
    = w^{p-n} \sum_{s^* \in \mathcal{S}(D^*_L)} \delta^{k_{s^*}} \prod_{i = 1}^{m} s^*(c^*_i)(\cl_C(c^*_i))^{\sign(c^*_i)}. 
    \]
    
    First, we have $\overline{w}^{n-p} = (w^{-1})^{n-p} = w^{p-n}$ by the definition of $\overline{w}$. 
    For a state ${\overline{s}} : \mathcal{C}(D_L) \to \{{\overline{A}}, {\overline{B}}\}$ of $D_L$, define a state 
 $s^* : \mathcal{C}(D^*_L) \to \{A, B\}$ of $D^*_L$ by 
\[ s^*(c^*_i) = \left\{ 
\begin{array}{ll}
A & (\mbox{if $\overline{s}(c_i) = \overline{A}$})\\
B & (\mbox{if $\overline{s}(c_i) = \overline{B}$}).
\end{array} \right.  \]
Then, we have $k_{\overline{s}} = k_{s^*}$ for any state $\overline{s}$ of $D_L$. 
    
    Choose an arbitrary state $\overline{s}$ of $D_L$. 
We show that 
\[ \prod_{i = 1}^{m} \overline{s}(c_i)(\cl_C(c_i))^{\sign(c_i)} = \prod_{i = 1}^{m} s^*(c^*_i)(\cl_C(c^*_i))^{\sign(c^*_i)}.\] 
Let $c_i$ be a crossing of $D_{i_1}$ and $D_{i_2}$. 
Let $m'$ denote the number of crossings in $|D_L|$ around which the regions are colored as in of the models in Figure~\ref{fig:coloring_pair}. 
Recall that $m'$ is an even number. 
As in the proof of Proposition~\ref{prop:comp_color}, swapping the subscripts if necessary, we can assume that  crossings of $D_L$ satisfy the conditions 
(\ref{prop:comp_color}-a) and (\ref{prop:comp_color}-b).  

Suppose that $i$ is greater than $m'$. 
Then, the regions around $c_i$ are colored as in either of the models shown Figures~\ref{fig:crossing_case1} and 
\ref{fig:crossing_case2} up to over/under information.
Thus, we have $\overline{s}(c_i)(\cl_C(c_i))^{\sign(c_i)} = s^*(c^*_i)(\cl_C(c^*_i))^{\sign(c^*_i)}$ because 
$\cl_C(c_i) = \cl_C(c^*_i)$, $\sign(c_i) = -\sign(c^*_i)$ and $\overline{s}(c_i)(a,b,c) = s^*(c^*_i)(a,b,c)^{-1}$ for 
any $a, b,c \in X_2$. 

Next, suppose that $i$ is an odd number less than $m'$. 
We show that 
\[ 
\overline{s}(c_i)(cl_C(c_i))^{\sign(c_i)}\overline{s}(c_{i+1})(cl_C(c_{i+1}))^{\sign(c_{i+1})} = s^*(c^*_i)(cl_C(c^*_i))^{\sign(c^*_i)}s^*(c^*_{i+1})(cl_C(c^*_{i+1}))^{\sign(c^*_{i+1})}.\] 
We assume that the colors of regions around $c_i, c_{i+1}$ are as on the top of Figure~\ref{fig:ci_ci1}. 
The same proof works for the case of the bottom of Figure~\ref{fig:ci_ci1}. 
    \begin{itemize}
        \item Suppose that $\sign(c_i) = \sign(c_{i+1}) = 1$. 
          In this case, we have  
          \[\cl_C(c_i) = (0,0,1), \cl_C(c_{i+1}) = (0,1,0), \cl_C(c^*_i) = (0,1,0), \cl_C(c^*_{i+1}) = (0,0,1),\] 
          as well as 
          $\sign(c^*_i) = \sign(c^*_{i+1}) = -1$. 
          Further, for any state state $s^*: \mathcal{C}(D^*_L) \to \{A, B\}$, we have 
          \[s^*(c_i)(0,0,1)s^*(c_{i+1})(0,1,0) = s^*(c_{i+1})(0,0,1)s^*(c_i)(0,1,0)\] 
          by assumption. 
          Therefore, we have 
            \begin{align*}
                \overline{s}(c_i)(\cl_C(c_i))^{\sign(c_i)}&\overline{s}(c_{i+1})(\cl_C(c_{i+1}))^{\sign(c_{i+1})}\\
                &= \overline{s}(c_i)(0,0,1)\overline{s}(c_{i+1})(0,1,0)\\
                &= s^*(c_i)(0,0,1)^{-1}s^*(c_{i+1})(0,1,0)^{-1}\\
                &= s^*(c_{i+1})(0,0,1)^{-1}s^*(c_i)(0,1,0)^{-1}\\
                &= s^*(c_i)(0,1,0)^{-1}s^*(c_{i+1})(0,0,1)^{-1}\\
                &= s^*(c^*_i)(\cl_C(c^*_i))^{\sign(c^*_i)}s^*(c^*_{i+1})(\cl_C(c^*_{i+1}))^{\sign(c^*_{i+1})},  
            \end{align*}
            which is the assertion. 
        \item Suppose that $\sign(c_i) = 1$ and $\sign(c_{i+1}) = -1$. 
        \begin{itemize}
            \item If $\overline{s}(c_i) = \overline{s}(c_{i+1})$, we have
                \begin{align*}
                    \overline{s}(c_i)(\cl_C(c_i))^{\sign(c_i)}&\overline{s}(c_{i+1})(\cl_C(c_{i+1}))^{\sign(c_{i+1})}\\
                    &= \overline{s}(c_i)(0,0,1)\overline{s}(c_{i+1})(0,0,1)^{-1}\\
                    &= 1\\
                    &= s^*(c_i)(0,1,0)^{-1}s^*(c_{i+1})(0,1,0)^{-1}\\
                    &= s^*(c^*_i)(\cl_C(c^*_i))^{\sign(c^*_i)}s^*(c^*_{i+1})(\cl_C(c^*_{i+1}))^{\sign(c^*_{i+1})} , 
                \end{align*}
                which implies the assertion. 
                
            \item If $\overline{s}(c_i) = \overline{A}, \overline{s}(c_{i+1}) = \overline{B}$, we have 
            $A_{0,0,1}^{-1}B_{,0,0,1} = A_{0,1,0}^{-1}B_{0,1,0}$ by assumption. 
            It follows that 
                \begin{align*}
                    \overline{s}(c_i)(\cl_C(c_i))^{\sign(c_i)}&\overline{s}(c_{i+1})(\cl_C(c_{i+1}))^{\sign(c_{i+1})}\\
                    &= \overline{s}(c_i)(0,0,1)\overline{s}(c_{i+1})(0,0,1)^{-1}\\
                    &= s^*(c_i)(0,0,1)^{-1}s^*(c_{i+1})(0,0,1)\\
                    &= A_{0,0,1}^{-1}B_{0,0,1}\\
                    &= A_{0,1,0}^{-1}B_{0,1,0}\\
                    &= s^*(c_i)(0,1,0)^{-1}s^*(c_{i+1})(0,1,0)\\
                    &= s^*(c^*_i)(\cl_C(c^*_i))^{\sign(c^*_i)}s^*(c^*_{i+1})(\cl_C(c^*_{i+1}))^{\sign(c^*_{i+1})} , 
                \end{align*}
               which is the assertion. 
            \item 
            The arguments for the case where $\overline{s}(c_i) = \overline{B}$ and $\overline{s}(c_{i+1}) = \overline{A}$ 
            are the same as for the case where $\overline{s}(c_i) = \overline{A}, \overline{s}(c_{i+1}) = \overline{B}$. 
        \end{itemize}
        \item 
        Suppose that $\sign(c_i) = -1$ and $\sign(c_{i+1}) = 1$. 
        In this case, we can show the assertion as in the case where $\sign(c_i) = 1$ and $\sign(c_{i+1}) = -1$.
        \item 
        Suppose that $\sign(c_i) = \sign(c_{i+1}) = -1$
        In this case, we can show the assertion as in the case where $\sign(c_i) = \sign(c_{i+1}) = 1$.
    \end{itemize}
  From above, for any odd number $i\ (i < m')$, we have 
  \[\overline{s}(c_i)(\cl_C(c_i))^{\sign(c_i)}\overline{s}(c_{i+1})(\cl_C(c_{i+1}))^{\sign(c_{i+1})} = s^*(c^*_i)(\cl_C(c^*_i))^{\sign(c^*_i)}s^*(c^*_{i+1})(\cl_C(c^*_{i+1}))^{\sign(c^*_{i+1})} .\]
 Therefore, we have $\prod_{i = 1}^{m} \overline{s}(c_i)(\cl_C(c_i))^{\sign(c_i)} = \prod_{i = 1}^{m} s^*(c^*_i)(\cl_C(c^*_i))^{\sign(c^*_i)}$ 
 for any state $\overline{s}$ of $D_L$. 
 
 Consequently, we obtain 
    \[
    \overline{w}^{n-p} \sum_{{\overline{s}} \in \mathcal{S}(D_L)} \overline{\delta}^{k_{\overline{s}}} \prod_{i = 1}^{m} {\overline{s}}(c_i)(cl_C(c_i))^{\sign(c_i)}
    = w^{p-n} \sum_{s^* \in \mathcal{S}(D^*_L)} \delta^{k_{s^*}} \prod_{i = 1}^{m} s^*(c^*_i)(cl_C(c^*_i))^{\sign(c^*_i)} . 
    \]
\end{proof}

\section{The universal tribracket brackets for the canonical two-element tribraket}
\label{sec: The universal tribracket brackets for the canonical two-element tribraket}

In this section, we introduce five tribracket brackets, $(A^{(1)}, B^{(1)}), \ldots, (A^{(5)}, B^{(5)})$, with respect to 
the canonical two-element tribracket $X_2$ and the Laurent polynomial ring $\mathbb{Z}[x_1^{\pm1}, \ldots, x_5^{\pm1}]$, referred to as the \emph{universal tribracket brackets}. 
We then show that any tribracket bracket with respect to $X_2$ over an arbitrary integral domain $R$ can be obtained from 
one of the pairs $(A^{(i)}, B^{(i)})$ through a ring homomorphism $f : \Zpoly \to R$.

Let $R$ be a commutative ring, and let $A, B: X_2^3 \to R$ be maps. 
We express these maps in terms of $3$-tensors as follows:

\begin{align*}
    A &= \left(
        \begin{pmatrix}
            A_{0,0,0} & A_{0,0,1}\\
            A_{0,1,0} & A_{0,1,1}
        \end{pmatrix},
        \begin{pmatrix}
            A_{1,0,0} & A_{1,0,1}\\
            A_{1,1,0} & A_{1,1,1}
        \end{pmatrix} \right)\\
    B &= \left(
        \begin{pmatrix}
            B_{0,0,0} & B_{0,0,1}\\
            B_{0,1,0} & B_{0,1,1}
        \end{pmatrix},
        \begin{pmatrix}
            B_{1,0,0} & B_{1,0,1}\\
            B_{1,1,0} & B_{1,1,1}
        \end{pmatrix} \right)\\
\end{align*}

\begin{definition}
For each $i \in \{1,2,3,4,5\}$, we define maps $A^{(i)}, B^{(i)} : X_2^3 \to \Zpoly$ as follows: 
    \begin{align*}
        {A^{(1)}} &= \left(
        \begin{pmatrix}
            x_1 & x_2\\
            x_3 & x_1
        \end{pmatrix},
        \begin{pmatrix}
            x_1 & x_4\\
            x_2x_3x_4^{-1} & x_1
        \end{pmatrix} \right) , \\
        {B^{(1)}} &= \left(
        \begin{pmatrix}
            x_5 & x_1^{-1}x_2x_5\\
            x_1x_3x_5^{-1} & x_5
        \end{pmatrix},
        \begin{pmatrix}
            x_5 & x_1x_4x_5^{-1}\\
            x_1^{-1}x_2x_3x_4^{-1}x_5 & x_5
        \end{pmatrix} \right), \\
        {A^{(2)}} &= \left(
        \begin{pmatrix}
            x_1 & x_2\\
            x_3 & x_1^3x_5^{-2}
        \end{pmatrix},
        \begin{pmatrix}
            x_1^3x_5^{-2} & x_4\\
            x_2x_3x_4^{-1} & x_1
        \end{pmatrix} \right), \\
        {B^{(2)}} &= \left(
        \begin{pmatrix}
            x_5 & x_1^{-1}x_2x_5\\
            x_1x_3x_5^{-1} & x_1^4x_5^{-3}
        \end{pmatrix},
        \begin{pmatrix}
            x_1^4x_5^{-3} & x_1x_4x_5^{-1}\\
            x_1^{-1}x_2x_3x_4^{-1}x_5 & x_5
        \end{pmatrix} \right), \\
        {A^{(3)}} &= \left(
        \begin{pmatrix}
            x_1 & x_2\\
            x_3 & x_1
        \end{pmatrix},
        \begin{pmatrix}
            x_1 & x_4\\
            x_2x_3x_4^{-1} & x_1
        \end{pmatrix} \right), \\
        {B^{(3)}} &= \left(
        \begin{pmatrix}
            x_5 & x_1x_2x_5^{-1}\\
            x_1^{-1}x_3x_5 & x_5
        \end{pmatrix},
        \begin{pmatrix}
            x_5 & x_1^{-1}x_4x_5\\
            x_1x_2x_3x_4^{-1}x_5^{-1} & x_5
        \end{pmatrix} \right), \\
        {A^{(4)}} &= \left(
        \begin{pmatrix}
            x_1 & x_2\\
            x_3 & x_1^3x_5^{-2}
        \end{pmatrix},
        \begin{pmatrix}
            x_1^3x_5^{-2} & x_4\\
            x_2x_3x_4^{-1} & x_1
        \end{pmatrix} \right), \\
        {B^{(4)}} &= \left(
        \begin{pmatrix}
            x_5 & x_1x_2x_5^{-1}\\
            x_1^{-1}x_3x_5 & x_1^4x_5^{-3}
        \end{pmatrix},
        \begin{pmatrix}
            x_1^4x_5^{-3} & x_1^{-1}x_4x_5\\
            x_1x_2x_3x_4^{-1}x_5^{-1} & x_5
        \end{pmatrix} \right), \\
        {A^{(5)}} &= \left(
        \begin{pmatrix}
            x_1 & x_2\\
            x_3 & x_1
        \end{pmatrix},
        \begin{pmatrix}
            x_1 & x_4\\
            x_2x_3x_4^{-1} & x_1
        \end{pmatrix} \right), \\
        {B^{(5)}} &= \left(
        \begin{pmatrix}
            x_5 & x_1^{-1}x_2x_5\\
            x_1^{-1}x_3x_5 & x_5
        \end{pmatrix},
        \begin{pmatrix}
            x_5 & x_1^{-1}x_4x_5\\
            x_1^{-1}x_2x_3x_4^{-1}x_5 & x_5
        \end{pmatrix} \right)
    \end{align*}
For each $i \in \{1,2,3,4,5\}$ and $a,b,c \in X_2$, we denote $A^{(i)}(a,b,c)$ and $B^{(i)}(a,b,c)$ by 
$A^{(i)}_{a,b,c}$ and $B^{(i)}_{a,b,c}$, respectively. 
\end{definition}

The following proposition is a straightforward verification that each pair $(A^{(i)}, B^{(i)})$ satisfies the conditions of Definition~\ref{def:tribracketbracket}. 
\begin{proposition}\label{prop:types 1-5 tribracket brackets}
 For each $i \in \{1, 2, 3, 4, 5\}$, 
 $(A^{(i)}, B^{(i)})$ is a tribracket bracket with respect to $X_2$ and $\Zpoly$. 
\end{proposition}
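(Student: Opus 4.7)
The plan is a direct verification: for each $i \in \{1,\dots,5\}$ the five conditions of Definition~\ref{def:tribracketbracket} reduce to a bounded list of Laurent polynomial identities in $\Zpoly$, each of which can be checked by substituting the entries from the tables. I would begin by proposing the auxiliary data $(\delta, w)$. A short inspection shows that for every $(a,b,c) \in X_2^3$ and every $i$, the combination appearing in condition~(\ref{1}) evaluates to the single element $\delta := -x_1 x_5^{-1} - x_5 x_1^{-1}$, and that for every $(a,b) \in X_2^2$ the expression $-(A^{(i)}_{a,b,b})^2 (B^{(i)}_{a,b,b})^{-1}$ equals $w := -x_1^2 x_5^{-1}$. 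Notably the same $\delta$ and $w$ work for all five brackets, which is a convenient uniformity.

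The substance is to verify equations (\ref{3a})--(\ref{3e}) on the $16$ tuples $(a,b,c,d)\in X_2^4$. Before grinding through all cases, I would record the manifest symmetries of the tables, namely $A^{(i)}_{0,0,0} = A^{(i)}_{1,1,1}$, $B^{(i)}_{0,0,0} = B^{(i)}_{1,1,1}$, $A^{(i)}_{0,1,1} = A^{(i)}_{1,0,0}$, $B^{(i)}_{0,1,1} = B^{(i)}_{1,0,0}$, and the product identity $A^{(i)}_{0,0,1}A^{(i)}_{0,1,0} = A^{(i)}_{1,1,0}A^{(i)}_{1,0,1}$. These can be read directly off the formulas (without invoking Lemma~\ref{lem:tri_eq}, which would be circular) and identify tuples related by the involution $x \mapsto x+1$ on $X_2$, sharply cutting the number of independent cases. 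The three monomial identities (\ref{3a})--(\ref{3c}) then reduce to exponent comparisons in the free abelian group on $x_1,\dots,x_5$.

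The main obstacle lies in (\ref{3d}) and (\ref{3e}), whose right-hand sides are sums of four cubic monomials in the $A$'s and $B$'s, with $\delta$ itself a binomial entering in two of the terms. After substitution from the tables, each side expands to a sum of Laurent monomials in $x_1,\dots,x_5$, and verifying complete cancellation requires careful bookkeeping. I would proceed tuple by tuple, substituting the entries, expanding the $\delta$ factor, and comparing coefficients of each Laurent monomial. Since the five universal brackets were engineered precisely to satisfy these identities, no conceptual difficulty should arise, but to guard against sign or exponent errors I would cross-check each identity with a computer algebra system.
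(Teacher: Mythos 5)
Your proposal is correct and takes essentially the same approach as the paper, which in fact offers no written proof at all but merely declares the statement ``a straightforward verification'' of Definition~\ref{def:tribracketbracket}; your plan---exhibiting the uniform $\delta=-x_1x_5^{-1}-x_1^{-1}x_5$ and $w=-x_1^2x_5^{-1}$ (the same values the paper records later, in the proof of Proposition~\ref{prop:linking_num}), reducing the sixteen tuples via the symmetries read directly off the tables without circular appeal to Lemma~\ref{lem:tri_eq}, and checking (\ref{3a})--(\ref{3e}) by monomial expansion with a computer-algebra cross-check---is precisely that finite verification, carried out explicitly. The only point to flag is that condition~(\ref{1}) must be read with matching indices, $\delta=-A_{a,b,c}B_{a,b,c}^{-1}-A_{a,b,c}^{-1}B_{a,b,c}$, as the paper itself does in the proof of Lemma~\ref{lem:tri_eq} and in the discussion preceding Proposition~\ref{prop:mirror}; under the literal printed form involving $A_{a,c,b}$ the identity already fails for $(A^{(1)},B^{(1)})$ at $(a,b,c)=(0,0,1)$ (an extraneous factor $x_2^{-1}x_3$ appears), so your same-index reading is the intended one and your verification goes through.
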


\begin{lemma}\label{lem:domain_tri_eq}
Let $R$ be an integral domain, and 
let $(A,B)$ be a tribracket bracket with respect to $X_2$ and $R$.  
Then the following conditions  hold: 
    \begin{enumerate} \renewcommand{\labelenumi}{(\ref{lem:domain_tri_eq}-\arabic{enumi})}
        \item $B_{0,0,1} = A_{0,0,0}B_{0,0,0}^{-1}A_{0,0,1}$ or $B_{0,0,1} = A_{0,0,0}^{-1}B_{0,0,0}A_{0,0,1}$. \label{dom_tri_eq_1}
        \item $B_{0,1,0} = A_{0,0,0}B_{0,0,0}^{-1}A_{0,1,0}$ or $B_{0,1,0} = A_{0,0,0}^{-1}B_{0,0,0}A_{0,1,0}$. \label{dom_tri_eq_2}
        \item $A_{0,1,1} = A_{0,0,0}$ or $A_{0,1,1} = A_{0,0,0}^3B_{0,0,0}^{-2}$. \label{dom_tri_eq_3}
        \item If $B_{0,1,0} = A_{0,0,0}B_{0,0,0}^{-1}A_{0,1,0}$, then $B_{0,0,1} = A_{0,0,0}^{-1}B_{0,0,0}A_{0,0,1}$. \label{dom_tri_eq_4}
        \item If $B_{0,1,0} = A_{0,0,0}^{-1}B_{0,0,0}A_{0,1,0}$ and $B_{0,0,1} = A_{0,0,0}^{-1}B_{0,0,0}A_{0,0,1}$, then 
        $A_{0,0,0} = A_{0,1,1}$. \label{dom_tri_eq_5}
    \end{enumerate}
\end{lemma}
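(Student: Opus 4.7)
The approach is to derive for each assertion a single polynomial identity from Definition~\ref{def:tribracketbracket} whose roots are exactly the values listed, then invoke the integral-domain hypothesis to confine each variable to one of them. To simplify notation I would set $e = A_{0,0,0}B_{0,0,0}^{-1}$ and $f = A_{0,0,0}^{-1}B_{0,0,0}$, so that $ef = 1$ and $\delta = -e-f$ (from condition (\ref{1}) of Definition~\ref{def:tribracketbracket} evaluated at $(a,b,c)=(0,0,0)$).

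For (\ref{dom_tri_eq_1}) and (\ref{dom_tri_eq_2}), I would substitute $(a,b,c,d) = (0,0,0,1)$ and $(0,1,0,0)$ respectively into equation (\ref{3d}) of Definition~\ref{def:tribracketbracket}. Using $A_{1,1,1}=A_{0,0,0}$ and $B_{1,1,1}=B_{0,0,0}$ from Lemma~\ref{lem:tri_eq}, the identity collapses in each case to a monic quadratic of the form $X^2 + \delta YX + Y^2 = 0$, which factors as $(X-eY)(X-fY)$ because $ef=1$ and $-\delta = e+f$. The integral-domain hypothesis then forces $B_{0,0,1}$ (resp.\ $B_{0,1,0}$) to equal $eA_{0,0,1}$ or $fA_{0,0,1}$ (resp.\ with $A_{0,1,0}$). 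For (\ref{dom_tri_eq_3}), the analogous substitution $(a,b,c,d)=(0,1,1,1)$ into (\ref{3d}), combined with $A_{1,0,0}=A_{0,1,1}$ and $B_{0,1,1}=B_{1,0,0}=A_{0,0,0}^{-2}B_{0,0,0}A_{0,1,1}^2$ from Lemma~\ref{lem:tri_eq}, produces after rescaling a quadratic in $A_{0,1,1}$ whose two roots are precisely $A_{0,0,0}$ and $A_{0,0,0}^3 B_{0,0,0}^{-2}$.

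For (\ref{dom_tri_eq_4}) and (\ref{dom_tri_eq_5}), I would introduce $u,v,w\in\{e,f\}$ via $B_{0,0,1}=uA_{0,0,1}$, $B_{0,1,0}=vA_{0,1,0}$, and $B_{0,1,1}=wA_{0,1,1}$, granted by (\ref{dom_tri_eq_1})--(\ref{dom_tri_eq_3}); Lemma~\ref{lem:tri_eq} additionally gives $B_{1,1,0}=uA_{1,1,0}$ and $B_{1,0,1}=vA_{1,0,1}$. Substituting into (\ref{3d}) at $(a,b,c,d)=(0,0,1,0)$ and dividing by $A_{0,0,0}A_{0,0,1}A_{0,1,0}$ produces the scalar identity $f(1-uv)=u+v+\delta uv$. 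A quick case check over $(u,v)\in\{e,f\}^2$, using $\delta=-e-f$ and $ef=1$, shows that the identity fails only for $(u,v)=(e,e)$ when $e^2\neq 1$. This yields (\ref{dom_tri_eq_4}) directly: the hypothesis $v = e$ forces $u = f$. Similarly, (\ref{3d}) at $(a,b,c,d)=(0,1,0,1)$, simplified using Lemma~\ref{lem:tri_eq}, reduces to $w(1-uv)=u+v+\delta uv$; under the hypotheses of (\ref{dom_tri_eq_5}), $u=v=f$, so the right-hand side evaluates to $f(1-f^2)$, giving $w=f$ whenever $f^2\neq 1$. Combined with $B_{0,1,1}=A_{0,0,0}^{-2}B_{0,0,0}A_{0,1,1}^2$, this yields $A_{0,1,1}=A_{0,0,0}$.

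In the exceptional subcase $e^2=1$, equivalently $A_{0,0,0}^2=B_{0,0,0}^2$, the two candidate values in each of (\ref{dom_tri_eq_1})--(\ref{dom_tri_eq_3}) coincide, so the conclusions of (\ref{dom_tri_eq_4}) and (\ref{dom_tri_eq_5}) hold automatically. The principal technical obstacle I foresee is the algebraic bookkeeping in deriving the scalar identities $f(1-uv)=u+v+\delta uv$ and $w(1-uv)=u+v+\delta uv$ from (\ref{3d}): one must check that several terms cancel exactly, and correctly track the factor $e^2-1$ that separates the generic from the degenerate case.
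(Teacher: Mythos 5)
Your proposal is correct, and the identities you anticipate do come out exactly as claimed: equation~(\ref{3d}) at $(a,b,c,d)=(0,0,0,1)$, $(0,1,0,0)$, $(0,1,1,1)$ collapses, after cancellation and division by a unit, to $X^2+\delta XY+Y^2=0$ with $(X,Y)=(B_{0,0,1},A_{0,0,1})$, $(B_{0,1,0},A_{0,1,0})$, $(B_{1,0,0},A_{0,1,1})$ respectively, and (\ref{3d}) at $(0,0,1,0)$ and $(0,1,0,1)$ yields precisely $f(1-uv)=u+v+\delta uv$ and $w(1-uv)=u+v+\delta uv$, with your four-case check and degenerate-case analysis closing the argument. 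The main divergence from the paper is in items (\ref{lem:domain_tri_eq}-\ref{dom_tri_eq_1})--(\ref{lem:domain_tri_eq}-\ref{dom_tri_eq_3}): the paper never invokes (\ref{3d}) there, but instead reads the same quadratics directly off condition~(\ref{1}) of Definition~\ref{def:tribracketbracket} (the requirement that $\delta$ be independent of the triple), evaluated at $(0,0,1)$, $(0,1,0)$ and $(0,1,1)$, the last combined with (\ref{lem:tri_eq}-\ref{tri_eq4}); your route costs the extra inputs (\ref{lem:tri_eq}-\ref{tri_eq1})--(\ref{lem:tri_eq}-\ref{tri_eq4}) but uses the $\delta$-condition only at the diagonal triple $(0,0,0)$, where the condition as printed (with $A_{a,c,b}$) and the version the paper actually applies (with $A_{a,b,c}$) agree, so your argument is insensitive to that discrepancy. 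For (\ref{lem:domain_tri_eq}-\ref{dom_tri_eq_4}) and (\ref{lem:domain_tri_eq}-\ref{dom_tri_eq_5}) your substitutions coincide with the paper's --- note that the substitution $(0,0,0,1)$ stated in the paper's proof of (\ref{lem:domain_tri_eq}-\ref{dom_tri_eq_4}) must be read as your $(0,0,1,0)$, since the factorization it displays involves $A_{0,1,0}$ and $B_{0,1,0}$ --- but the logic is organized differently: the paper proves (\ref{lem:domain_tri_eq}-\ref{dom_tri_eq_4}) by contrapositive and, in (\ref{lem:domain_tri_eq}-\ref{dom_tri_eq_5}), re-derives the degenerate case $A_{0,0,0}^2=B_{0,0,0}^2$ by a separate computation with condition~(\ref{1}) and (\ref{lem:tri_eq}-\ref{tri_eq4}), whereas you run one uniform scalar identity with a case check over $(u,v)\in\{e,f\}^2$ and dispose of the degenerate case once, by observing that the two candidate roots in (\ref{lem:domain_tri_eq}-\ref{dom_tri_eq_1})--(\ref{lem:domain_tri_eq}-\ref{dom_tri_eq_3}) then coincide; this last observation is tidier than the paper's treatment, while the paper's derivation of the first three items is the shorter of the two.
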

\begin{proof}
    \begin{enumerate} \renewcommand{\labelenumi}{(\ref{lem:domain_tri_eq}-\arabic{enumi})}
        \item By the condition~(\ref{1}) in Definition~\ref{def:tribracketbracket}, 
        we have 
        \[-A_{0,0,0}B_{0,0,0}^{-1} - A_{0,0,0}^{-1}B_{0,0,0} = -A_{0,0,1}B_{0,0,1}^{-1} - A_{0,0,1}^{-1}B_{0,0,1}.\]  
        Modifying this equation, we obtain 
        $(B_{0,0,1} - A_{0,0,0}B_{0,0,0}B_{0,0,0}^{-1}A_{0,0,1})(B_{0,0,1} - A_{0,0,0}^{-1}B_{0,0,0}A_{0,0,1}) = 0$. 
        Since $R$ is an integral domain, it follows that $B_{0,0,1} = A_{0,0,0}B_{0,0,0}^{-1}A_{0,0,1}$ or 
        $B_{0,0,1} = A_{0,0,0}^{-1}B_{0,0,0}A_{0,0,1}$. 
        \item 
        The arguments are the same as for (\ref{lem:domain_tri_eq}-\ref{dom_tri_eq_1}). 
        \item  
        By the condition~(\ref{1}) of Definition~\ref{def:tribracketbracket}, we have 
        $-A_{0,0,0}B_{0,0,0}^{-1} - A_{0,0,0}^{-1}B_{0,0,0} = -A_{0,1,1}B_{0,1,1}^{-1} - A_{0,1,1}^{-1}B_{0,1,1}$. 
        Modifying this equation after substituting equations~(\ref{lem:tri_eq}-\ref{tri_eq4}) in Lemma~\ref{lem:tri_eq}, we obtain 
        $(1-A_{0,0,0}^{-1}A_{0,1,1})(B_{0,0,0}^2 - A_{0,0,0}^3A_{0,1,1}^{-1}) = 0$. 
        Since $R$ is an integral domain, it follows $A_{0,1,1} = A_{0,0,0}$ or $A_{0,1,1} = A_{0,0,0}^3B_{0,0,0}^{-2}$. 
        \item 
        Suppose that $B_{0,0,1} \neq A_{0,0,0}^{-1}B_{0,0,0}A_{0,0,1}$. 
        By (\ref{lem:domain_tri_eq}-\ref{dom_tri_eq_1}), we have $B_{0,0,1} = A_{0,0,0}B_{0,0,0}^{-1}A_{0,0,1}$. 
        If $A_{0,0,0}B_{0,0,0}^{-1} = A_{0,0,0}^{-1}B_{0,0,0}$, then 
        $B_{0,0,1} = A_{0,0,0}B_{0,0,0}^{-1}A_{0,0,1} = A_{0,0,0}^{-1}B_{0,0,0}A_{0,0,1}$, which contradicts the assumption. 
        Thus, we have $A_{0,0,0}B_{0,0,0}^{-1} \neq A_{0,0,0}^{-1}B_{0,0,0}$. 
        On the other hand, substituting $(a, b, c, d) = (0,0,0,1)$ into equation~(\ref{3d}) in Definition~\ref{def:tribracketbracket}, 
        and modifying that using $B_{1,1,1} = B_{0,0,0}$ and $B_{0,0,1} = A_{0,0,0}B_{0,0,0}^{-1}A_{0,0,1}$, we get 
        $(A_{0,1,0}B_{0,0,0} - B_{0,1,0}A_{0,0,0})(A_{0,0,0}^2B_{0,0,0}^{-2}-1)=0$. 
        Since $A_{0,0,0}B_{0,0,0}^{-1} \neq A_{0,0,0}^{-1}B_{0,0,0}$, we have $A_{0,0,0}^2 \neq B_{0,0,0}^2$, and hence, 
        $A_{0,0,0}^2B_{0,0,0}^{-2}-1 \neq 0$. 
        Thus, we obtain $A_{0,1,0}B_{0,0,0} - B_{0,1,0}A_{0,0,0} = 0$, that is, $B_{0,1,0} = A_{0,0,0}^{-1}B_{0,0,0}A_{0,1,0}$. 
        Now, we have $B_{0,1,0} \neq A_{0,0,0}B_{0,0,0}^{-1}A_{0,1,0}$, because $A_{0,0,0} B_{0,0,0}^{-1} \neq A_{0,0,0}^{-1} B_{0,0,0}$. 
        \item 
        Suppose that $B_{0,1,0} = A_{0,0,0}^{-1}B_{0,0,0}A_{0,1,0}$ and $B_{0,0,1} = A_{0,0,0}^{-1}B_{0,0,0}A_{0,0,1}$. 
         Substituting $(a, b, c, d) = (0,1,0,1)$ in to equation~(\ref{3d}) in Definition~\ref{def:tribracketbracket},  and then modifying that 
         using equations~(\ref{lem:tri_eq}-\ref{tri_eq4}), (\ref{lem:tri_eq}-\ref{tri_eq5}), (\ref{lem:tri_eq}-\ref{tri_eq6}), (\ref{lem:tri_eq}-\ref{tri_eq7}) in 
         Lemma~\ref{lem:tri_eq} and the assumptions 
         $B_{0,1,0} = A_{0,0,0}^{-1}B_{0,0,0}A_{0,1,0}$, $B_{0,0,1} = A_{0,0,0}^{-1}B_{0,0,0}A_{0,0,1}$, 
         we obtain $(A_{0,1,1}A_{0,0,0}^{-1}-1)(1-A_{0,0,0}^{-2}B_{0,0,0}^2) = 0$. 
         If $A_{0,1,1}A_{0,0,0}^{-1}-1 = 0$, then we have $A_{0,0,0} = A_{0,1,1}$, whence the assertion. 
         Suppose $1-A_{0,0,0}^{-2}B_{0,0,0}^2 = 0$. 
         Then we have $A_{0,0,0}^2 = B_{0,0,0}^2$. 
         It follows from the condition~(\ref{1}) in Definition~\ref{def:tribracketbracket}, that 
         $-A_{0,0,0}B_{0,0,0}^{-1} - A_{0,0,0}^{-1}B_{0,0,0} = -A_{0,1,1}B_{0,1,1}^{-1} - A_{0,1,1}^{-1}B_{0,1,1}$. 
         Modifying this equation using $A_{0,0,0}^2 = B_{0,0,0}^2$ and the equation~(\ref{lem:tri_eq}-\ref{tri_eq4}) in Lemma~\ref{lem:tri_eq}, 
         we finally get $A_{0,0,0} = A_{0,1,1}$. 
    \end{enumerate}
\end{proof}

\begin{lemma} \label{lem:type}
Let $R$ be an integral domain, and 
let $(A,B)$ be a tribracket bracket with respect to $X_2$ and $R$.  
Then at least one of the following conditions $(1)$--$(5)$ holds: 
    \begin{enumerate}
        \item $B_{0,0,1} = A_{0,0,0}B_{0,0,0}^{-1}A_{0,0,1}$, $B_{0,0,1} = A_{0,0,0}^{-1}B_{0,0,0}A_{0,0,1}$ and $A_{0,1,1} = A_{0,0,0}$. 
        \item $B_{0,0,1} = A_{0,0,0}B_{0,0,0}^{-1}A_{0,0,1}$, $B_{0,0,1} = A_{0,0,0}^{-1}B_{0,0,0}A_{0,0,1}$ and $A_{0,1,1} = A_{0,0,0}^3B_{0,0,0}^{-2}$. 
        \item $B_{0,0,1} = A_{0,0,0}^{-1}B_{0,0,0}A_{0,0,1}$, $B_{0,1,0} = A_{0,0,0}B_{0,0,0}^{-1}A_{0,1,0}$ and $A_{0,1,1} = A_{0,0,0}$. 
        \item $B_{0,0,1} = A_{0,0,0}^{-1}B_{0,0,0}A_{0,0,1}$, $B_{0,1,0} = A_{0,0,0}B_{0,0,0}^{-1}A_{0,1,0}$ and $A_{0,1,1} = A_{0,0,0}^3B_{0,0,0}^{-2}$. 
        \item $B_{0,0,1} = A_{0,0,0}^{-1}B_{0,0,0}A_{0,0,1}$, $B_{0,1,0} = A_{0,0,0}^{-1}B_{0,0,0}A_{0,1,0}$ and $A_{0,1,1} = A_{0,0,0}$. 
    \end{enumerate}
\end{lemma}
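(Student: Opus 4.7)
The proof is a case analysis built directly on Lemma~\ref{lem:domain_tri_eq}. Its items (\ref{lem:domain_tri_eq}-\ref{dom_tri_eq_1}), (\ref{lem:domain_tri_eq}-\ref{dom_tri_eq_2}), (\ref{lem:domain_tri_eq}-\ref{dom_tri_eq_3}) provide three independent binary dichotomies for $B_{0,0,1}$, $B_{0,1,0}$, and $A_{0,1,1}$; items (\ref{lem:domain_tri_eq}-\ref{dom_tri_eq_4}) and (\ref{lem:domain_tri_eq}-\ref{dom_tri_eq_5}) act as compatibility constraints that cut the a priori $2^3 = 8$ combinations down to the five listed in the statement. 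No new substitution into the tribracket bracket axioms is required, so the argument here is essentially logical bookkeeping layered on top of Lemma~\ref{lem:domain_tri_eq}.

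To structure the enumeration, I would introduce shorthand: let $P$, $P'$ denote the two alternatives for $B_{0,0,1}$ supplied by (\ref{lem:domain_tri_eq}-\ref{dom_tri_eq_1}); analogously $Q$, $Q'$ for $B_{0,1,0}$ via (\ref{lem:domain_tri_eq}-\ref{dom_tri_eq_2}), and $R_1$, $R_2$ for $A_{0,1,1}$ via (\ref{lem:domain_tri_eq}-\ref{dom_tri_eq_3}). A key preliminary observation, exploiting that $R$ is an integral domain, is that $P$ and $P'$ coincide as equations precisely when $A_{0,0,0}^2 = B_{0,0,0}^2$; in this degenerate situation $Q \Leftrightarrow Q'$ and $R_1 \Leftrightarrow R_2$ also hold. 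This cleanly separates the analysis into a degenerate regime and a generic regime $A_{0,0,0}^2 \neq B_{0,0,0}^2$.

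The case analysis then proceeds as follows. In the degenerate regime, both $P$ and $P'$ hold simultaneously, so pairing with $R_1$ or $R_2$ from (\ref{lem:domain_tri_eq}-\ref{dom_tri_eq_3}) yields condition (1) or (2). In the generic regime exactly one of $P, P'$ and one of $Q, Q'$ holds; the contrapositive of (\ref{lem:domain_tri_eq}-\ref{dom_tri_eq_4}), namely $\neg P' \Rightarrow \neg Q$, eliminates the pair $\neg P' \wedge Q$, leaving three feasible Boolean branches. The branch $(P', Q)$ combined with $R_1$ or $R_2$ gives conditions (3) or (4); in the branch $(P', Q')$, fact (\ref{lem:domain_tri_eq}-\ref{dom_tri_eq_5}) forces $R_1$ and gives condition (5); and the remaining branch $(P, Q')$, paired with $R_1$ or $R_2$ from (\ref{lem:domain_tri_eq}-\ref{dom_tri_eq_3}), is matched to the corresponding listed condition.

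The main obstacle is the bookkeeping step itself: one must check that each Boolean branch lines up with exactly one of the literal three-equation statements (1)--(5), in particular matching the $(P, Q')$ branch to the appropriate listed condition without omission. The algebraic content is entirely absorbed into Lemma~\ref{lem:domain_tri_eq}; once its five facts are in hand, Lemma~\ref{lem:type} follows by a short and mechanical enumeration.
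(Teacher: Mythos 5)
Your overall strategy---pure Boolean bookkeeping on top of Lemma~\ref{lem:domain_tri_eq}, split into the degenerate regime $A_{0,0,0}^2 = B_{0,0,0}^2$ and the generic regime---is sound, and your first three branches are handled correctly. The proof breaks at exactly the point you yourself flagged as the main obstacle: the generic branch $(P, Q')$, i.e.\ $B_{0,0,1} = A_{0,0,0}B_{0,0,0}^{-1}A_{0,0,1}$ and $B_{0,1,0} = A_{0,0,0}^{-1}B_{0,0,0}A_{0,1,0}$ while $B_{0,0,1} \neq A_{0,0,0}^{-1}B_{0,0,0}A_{0,0,1}$. You assert that this branch ``is matched to the corresponding listed condition,'' but no listed condition matches it: every one of (1)--(5) as printed contains the equation $B_{0,0,1} = A_{0,0,0}^{-1}B_{0,0,0}A_{0,0,1}$ (it is the second equation in (1) and (2), and the first in (3)--(5)), and that equation fails throughout this branch. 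So the covering claim is simply not established, and the enumeration is not the harmless formality your write-up treats it as.

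Moreover, this gap cannot be repaired for the statement as printed, because that statement is false. By Proposition~\ref{prop:types 1-5 tribracket brackets}, $(A^{(3)}, B^{(3)})$ is a tribracket bracket over the integral domain $\Zpoly$, and it lies precisely in your $(P,Q')$ branch: $B^{(3)}_{0,0,1} = x_1x_2x_5^{-1} = A^{(3)}_{0,0,0}\bigl(B^{(3)}_{0,0,0}\bigr)^{-1}A^{(3)}_{0,0,1}$, $B^{(3)}_{0,1,0} = x_1^{-1}x_3x_5 = \bigl(A^{(3)}_{0,0,0}\bigr)^{-1}B^{(3)}_{0,0,0}A^{(3)}_{0,1,0}$, $A^{(3)}_{0,1,1} = A^{(3)}_{0,0,0}$, yet $x_1x_2x_5^{-1} \neq x_1^{-1}x_2x_5$; hence it satisfies none of (1)--(5) as printed. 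The printed conditions (1) and (2) carry a typo: their second equation should be $B_{0,1,0} = A_{0,0,0}^{-1}B_{0,0,0}A_{0,1,0}$, so that (1) and (2) become the two $P \wedge Q'$ conditions---which is also what is needed for the five conditions to correspond to the five universal brackets in Theorem~\ref{thm:iff_cond}. Once the list is corrected, your argument closes and coincides in substance with the paper's: the paper enumerates the $2^3$ combinations coming from (\ref{lem:domain_tri_eq}-\ref{dom_tri_eq_1})--(\ref{lem:domain_tri_eq}-\ref{dom_tri_eq_3}) and discards three of them using (\ref{lem:domain_tri_eq}-\ref{dom_tri_eq_4}) and (\ref{lem:domain_tri_eq}-\ref{dom_tri_eq_5}), while your degenerate/generic split is the same bookkeeping in a slightly different arrangement (one that, in fact, treats the non-exclusivity of the dichotomies more carefully than the paper's ``three cases are impossible'' phrasing).
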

\begin{proof}
   By Lemma~\ref{lem:domain_tri_eq} (\ref{lem:domain_tri_eq}-\ref{dom_tri_eq_1}), (\ref{lem:domain_tri_eq}-\ref{dom_tri_eq_2}), (\ref{lem:domain_tri_eq}-\ref{dom_tri_eq_3}), we have 
    \begin{itemize}
        \item $B_{0,0,1} = A_{0,0,0}B_{0,0,0}^{-1}A_{0,0,1}$ or $B_{0,0,1} = A_{0,0,0}^{-1}B_{0,0,0}A_{0,0,1}$;  
        \item $B_{0,1,0} = A_{0,0,0}B_{0,0,0}^{-1}A_{0,1,0}$ or $B_{0,1,0} = A_{0,0,0}^{-1}B_{0,0,0}A_{0,1,0}$; and 
        \item $A_{0,1,1} = A_{0,0,0}$  or $A_{0,1,1} = A_{0,0,0}^3B_{0,0,0}^{-2}$.
    \end{itemize}
We show that among these eight cases, three are impossible. 
By Lemma~\ref{lem:domain_tri_eq} (\ref{lem:domain_tri_eq}-\ref{dom_tri_eq_4}), 
if $B_{0,0,1} = A_{0,0,0}B_{0,0,0}^{-1}A_{0,0,1}$, then we have $B_{0,0,1} = A_{0,0,0}^{-1}B_{0,0,0}A_{0,0,1}$. 
Thus, there are no tribracket brackets with 
  \begin{itemize}
       \item $B_{0,0,1} = A_{0,0,0}B_{0,0,0}^{-1}A_{0,0,1}$, $B_{0,0,1} = A_{0,0,0}B_{0,0,0}^{-1}A_{0,0,1}$ and $A_{0,1,1} = A_{0,0,0}$; or 
       \item $B_{0,0,1} = A_{0,0,0}B_{0,0,0}^{-1}A_{0,0,1}$, $B_{0,0,1} = A_{0,0,0}B_{0,0,0}^{-1}A_{0,0,1}$ and $A_{0,1,1} = A_{0,0,0}^3B_{0,0,0}^{-2}$. 
  \end{itemize}
On the other hand, by Lemma~\ref{lem:domain_tri_eq} (\ref{lem:domain_tri_eq}-\ref{dom_tri_eq_5}), 
if $B_{0,1,0} = A_{0,0,0}^{-1}B_{0,0,0}A_{0,1,0}$ and $B_{0,0,1} = A_{0,0,0}^{-1}B_{0,0,0}A_{0,0,1}$, then we have $A_{0,0,0} = A_{0,1,1}$. 
Thus, there are no tribracket brackets with 
    \begin{itemize}
        \item $B_{0,1,0} = A_{0,0,0}^{-1}B_{0,0,0}A_{0,1,0}$, $B_{0,0,1} = A_{0,0,0}^{-1}B_{0,0,0}A_{0,0,1}$ and $A_{0,1,1} = A_{0,0,0}^3B_{0,0,0}^{-2}$. 
    \end{itemize}
\end{proof}

By Lemma~\ref{lem:type}, the tribracket brackets with respect to $X_2$ and an integral domain $R$ are divided into five types. 
We say that $(A,B)$ is of \emph{type $i$} if $(A, B)$ satisfies condition $(i)$ of Lemma~\ref{lem:type}. 

\begin{theorem}\label{thm:iff_cond}
Let $R$ be an integral domain. 
For maps $A, B : X_2^3 \to R$, define the ring homomorphism $f_{A,B} : \mathbb{Z}[x_1^{\pm1}, \ldots, x_5^{\pm1}] \to R$ by 
\begin{align*}
    f_{A,B}(1) &= 1_R, & f_{A,B}(x_1) &= A_{0,0,0}, & f_{A,B}(x_2) &= A_{0,0,1}, \\
    f_{A,B}(x_3) &= A_{0,1,0}, & f_{A,B}(x_4) &= A_{1,0,1}, & f_{A,B}(x_5) &= B_{0,0,0}, 
\end{align*}
where $1_R$ is the multiplicative identity of $R$. 
Then $(A, B)$ is a tribracket bracket with respect to $X_2$ and $R$ if and only if 
there exists $i \in \{1, 2, 3, 4, 5\}$ such that $(f_{A,B} \circ A^{(i)}, f_{A,B} \circ B^{(i)}) = (A, B)$. 
\end{theorem}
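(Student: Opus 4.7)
The plan is to prove the two implications separately. For the ``if'' direction, each pair $(A^{(i)}, B^{(i)})$ is already a tribracket bracket with respect to $X_2$ and $\Zpoly$ by Proposition~\ref{prop:types 1-5 tribracket brackets}, and the defining conditions in Definition~\ref{def:tribracketbracket} are polynomial identities among the entries. Any ring homomorphism therefore carries a tribracket bracket to a tribracket bracket. The only point requiring care is that $f_{A,B}$ be a well-defined homomorphism $\Zpoly \to R$, which amounts to $f_{A,B}(x_j) \in R^\times$ for each $j$; this is forced by the hypothesis, since the values $A_{0,0,0}, A_{0,0,1}, A_{0,1,0}, A_{1,0,1}, B_{0,0,0}$ are entries of the tribracket bracket $(A,B)$ and so automatically lie in $R^\times$.

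For the ``only if'' direction, suppose $(A,B)$ is a tribracket bracket with respect to $X_2$ and $R$. By Lemma~\ref{lem:type}, $(A,B)$ is of type $i$ for some $i \in \{1,\ldots,5\}$. The plan is then to verify, entry by entry, that $A_{a,b,c} = f_{A,B}(A^{(i)}_{a,b,c})$ and $B_{a,b,c} = f_{A,B}(B^{(i)}_{a,b,c})$ for every $(a,b,c) \in X_2^3$. The five generators $A_{0,0,0}, A_{0,0,1}, A_{0,1,0}, A_{1,0,1}, B_{0,0,0}$ match the images of $x_1, \ldots, x_5$ by the definition of $f_{A,B}$. Lemma~\ref{lem:tri_eq} then expresses every remaining $A$- and $B$-entry as a Laurent monomial in those five generators together with $A_{0,1,1}$, $B_{0,0,1}$, and $B_{0,1,0}$; the type-$i$ formulas of Lemma~\ref{lem:type} in turn fix these last three as explicit monomials in the generators, so each $A_{a,b,c}$ and $B_{a,b,c}$ becomes a specific Laurent monomial that one can match against the tabulated entries of $A^{(i)}$ and $B^{(i)}$.

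As a sample computation in type~$5$, combining $A_{0,1,1} = A_{0,0,0}$ with Lemma~\ref{lem:tri_eq}(\ref{tri_eq3}) gives $A_{1,0,0} = A_{0,0,0}$, matching $A^{(5)}_{1,0,0} = x_1$; Lemma~\ref{lem:tri_eq}(\ref{tri_eq5}) gives $A_{1,1,0} = A_{0,0,1} A_{0,1,0} A_{1,0,1}^{-1}$, matching $x_2 x_3 x_4^{-1}$; and Lemma~\ref{lem:tri_eq}(\ref{tri_eq7}) combined with the type-$5$ formula $B_{0,1,0} = A_{0,0,0}^{-1} B_{0,0,0} A_{0,1,0}$ gives $B_{1,0,1} = A_{0,0,0}^{-1} A_{1,0,1} B_{0,0,0}$, matching $B^{(5)}_{1,0,1} = x_1^{-1} x_4 x_5$. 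The asymmetric types $2$ and $4$, where $A_{0,1,1} = A_{0,0,0}^{3} B_{0,0,0}^{-2}$, produce the monomials $x_1^3 x_5^{-2}$ and $x_1^4 x_5^{-3}$ in the relevant entries via Lemma~\ref{lem:tri_eq}(\ref{tri_eq4}).

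The main obstacle is the volume of bookkeeping rather than any delicate argument: five types times sixteen entries gives eighty matchings to check. In each case one must confirm that the relations gathered in Lemma~\ref{lem:tri_eq}, together with the type-$i$ data, leave no entry undetermined and introduce no further consistency constraint; in particular, items~(\ref{tri_eq8})--(\ref{tri_eq10}) of Lemma~\ref{lem:tri_eq} should be checked to be automatic consequences of the already derived formulas rather than independent constraints. Compatibility of the resulting $(A,B)$ with the Reidemeister~III conditions (\ref{3d})--(\ref{3e}) of Definition~\ref{def:tribracketbracket} is then guaranteed by Proposition~\ref{prop:types 1-5 tribracket brackets} combined with the ``if'' direction already established.
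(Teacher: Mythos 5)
Your proposal is correct and follows essentially the same route as the paper's own proof: the ``if'' direction via Proposition~\ref{prop:types 1-5 tribracket brackets} and the fact that ring homomorphisms preserve the defining identities, and the ``only if'' direction via Lemma~\ref{lem:type} (to place $(A,B)$ in a type $i$) followed by entry-by-entry matching using Lemma~\ref{lem:tri_eq} together with the type-$i$ relations. The paper compresses the second half into ``we can check that $f_{A,B}(A^{(i)}_{a,b,c}) = A_{a,b,c}$ and $f_{A,B}(B^{(i)}_{a,b,c}) = B_{a,b,c}$,'' so your sample computations and your remark on the well-definedness of $f_{A,B}$ (the five generator values being units) simply make explicit what the paper leaves implicit.
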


\begin{proof}
Suppose that there exists $i \in \{1, 2, 3, 4, 5\}$ such that $(f_{A,B} \circ A^{(i)}, f_{A,B} \circ B^{(i)}) = (A, B)$. 
Since $(A^{(i)}, B^{(i)})$ is a tribracket bracket by Proposition~\ref{prop:types 1-5 tribracket brackets} and $f_{A,B}$ is a ring homomorphism, 
$(f_{A,B} \circ A^{(i)}, f_{A,B} \circ B^{(i)})$ is a tribracket bracket. 

Conversely, suppose that $(A, B)$ is a tribracket bracket of type $i$. 
Then, by Lemmas~\ref{lem:tri_eq} and \ref{lem:type}, we can check that 
$f_{A,B}(A^{(i)}_{a,b,c}) = A_{a,b,c}$ and $f_{A,B}(B^{(i)}_{a,b,c}) = B_{a,b,c}$ hold for any $a, b, c \in X_2$. 
Thus, we have $(f_{A,B} \circ A^{(i)}, f_{A,B} \circ B^{(i)}) = (A, B)$. 
\end{proof}

By Theorem~\ref{thm:iff_cond}, any tribracket bracket with respect to $X_2$ and an integral domain $R$ 
is obtained from a certain pair $(A^{(i)}, B^{(i)})$ and a ring homomorphism $f : \mathbb{Z}[x_1^{\pm1}, \ldots, x_5^{\pm1}] \to R$. 
Therefore, we call the pairs $(A^{(1)}, B^{(1)}), \ldots, (A^{(5)}, B^{(5)})$ \emph{universal tribracket brackets} with respect to $X_2$. 

For a tribracket $X$ and a commutative ring $R$, $\mathcal{T}_{X, R}$ denotes the set of tribracket brackets with respect to $X$ and $R$.  
When $X = X_2$ and $R$ is an integral domain, 
$\mathcal{T}_{X_2, R}^{(i)} \subset \mathcal{T}_{X_2, R}$ denotes the set of tribracket brackets of type $i$. 
Then, by Theorem~\ref{thm:iff_cond}, we have $\mathcal{T}_{X_2, R} =  \bigcup_{i=1}^5 \mathcal{T}_{X_2, R}^{(i)}$. 
 $\mathcal{T}_{X_2, R}^{(1)}, \ldots, \mathcal{T}_{X_2, R}^{(5)}$ is not a decomposition of $\mathcal{T}_{X_2, R}$, but their intersections 
 are completely determined by the following Corollary~\ref{cor:intersection}. 

\begin{corollary}\label{cor:intersection}
Let $R$ be an integral domain. 
Then, for any $i, j \in \{1, 2, 3, 4, 5\}$ with $i \neq j$, we have 
\[
\mathcal{T}^{(i)}_{X_2, R} \cap \mathcal{T}^{(j)}_{X_2, R} = \bigcap_{k=1}^5 \mathcal{T}^{(k)}_{X_2, R} = \{(A, B) \in \mathcal{T}_{X_2, R} \mid A_{0,0,0}^2 = B_{0,0,0}^2\}.
\]
\end{corollary}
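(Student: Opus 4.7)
The plan is to reduce the statement to an elementary observation about the three binary conditions that distinguish the five types in Lemma~\ref{lem:type}. Write $B_1^{+}$ for the condition $B_{0,0,1} = A_{0,0,0}B_{0,0,0}^{-1}A_{0,0,1}$ and $B_1^{-}$ for $B_{0,0,1} = A_{0,0,0}^{-1}B_{0,0,0}A_{0,0,1}$; analogously define $B_2^{\pm}$ for the two options on $B_{0,1,0}$, and $A^{+}$, $A^{-}$ for the two options $A_{0,1,1} = A_{0,0,0}$ and $A_{0,1,1} = A_{0,0,0}^{3} B_{0,0,0}^{-2}$, respectively. Each of the five types in Lemma~\ref{lem:type} is then specified by a distinct triple in $\{+,-\}^{3}$.

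The first step is to observe that in each of the three categories, the simultaneous validity of both the $+$ and the $-$ option is equivalent to the single equation $A_{0,0,0}^{2} = B_{0,0,0}^{2}$. Indeed, since $A_{0,0,1}$ is a unit in $R$, $B_1^{+} \wedge B_1^{-}$ collapses to $A_{0,0,0} B_{0,0,0}^{-1} = A_{0,0,0}^{-1} B_{0,0,0}$; the cases $B_2^{+} \wedge B_2^{-}$ and $A^{+} \wedge A^{-}$ reduce to the same equation by identical manipulations (for $A^{\pm}$, after cancelling a factor of $A_{0,0,0}$ from $A_{0,0,0} = A_{0,0,0}^{3} B_{0,0,0}^{-2}$).

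For the inclusion $\{(A,B) \in \mathcal{T}_{X_2,R} \mid A_{0,0,0}^{2} = B_{0,0,0}^{2}\} \subseteq \bigcap_{k=1}^{5} \mathcal{T}^{(k)}_{X_2,R}$, take any tribracket bracket with $A_{0,0,0}^{2} = B_{0,0,0}^{2}$. By parts (\ref{lem:domain_tri_eq}-\ref{dom_tri_eq_1}), (\ref{lem:domain_tri_eq}-\ref{dom_tri_eq_2}) and (\ref{lem:domain_tri_eq}-\ref{dom_tri_eq_3}) of Lemma~\ref{lem:domain_tri_eq}, at least one option holds in each of the three categories. The hypothesis $A_{0,0,0}^{2} = B_{0,0,0}^{2}$, combined with the first step, then forces the complementary option to hold as well. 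Hence all six conditions $B_1^{\pm}, B_2^{\pm}, A^{\pm}$ hold simultaneously, so $(A,B)$ satisfies every triple in $\{+,-\}^{3}$; in particular it belongs to $\mathcal{T}^{(k)}_{X_2,R}$ for every $k \in \{1,\ldots,5\}$.

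For the opposite direction, suppose $(A,B) \in \mathcal{T}^{(i)}_{X_2,R} \cap \mathcal{T}^{(j)}_{X_2,R}$ for some distinct $i, j$. The defining triples of types $i$ and $j$ are distinct elements of $\{+,-\}^{3}$, so they disagree in at least one coordinate; in that coordinate, $(A,B)$ satisfies both the $+$ and the $-$ option, and by the first step this forces $A_{0,0,0}^{2} = B_{0,0,0}^{2}$. Chaining the three inclusions
\[
\mathcal{T}^{(i)}_{X_2,R} \cap \mathcal{T}^{(j)}_{X_2,R} \subseteq \{(A,B) \in \mathcal{T}_{X_2,R} \mid A_{0,0,0}^{2} = B_{0,0,0}^{2}\} \subseteq \bigcap_{k=1}^{5} \mathcal{T}^{(k)}_{X_2,R} \subseteq \mathcal{T}^{(i)}_{X_2,R} \cap \mathcal{T}^{(j)}_{X_2,R}
\]
yields equality throughout. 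There is no substantive obstacle here; the only care required is making the equivalence in the first step fully explicit, after which the conclusion is pure bookkeeping over the five admissible triples.
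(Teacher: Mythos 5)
Your proposal is correct and takes essentially the same route as the paper: the engine in both is that, within any one of the three categories, the $+$ and $-$ options coincide precisely when $A_{0,0,0}^2 = B_{0,0,0}^2$ (with Lemma~\ref{lem:domain_tri_eq} supplying that at least one option always holds), so membership in two distinct types forces the equation, and the equation conversely forces membership in all five types; the paper merely works out the pair $(1,2)$ and declares the remaining pairs analogous, whereas you handle all pairs uniformly and get the triple equality by your chain of inclusions. One small caveat: as literally printed, types $1$ and $2$ in Lemma~\ref{lem:type} list both options on $B_{0,0,1}$ and none on $B_{0,1,0}$, so the five types are not all "distinct triples in $\{+,-\}^{3}$" as you assert; this is evidently a typo in the paper (the intended triples are the ones realized by the universal brackets $(A^{(i)}, B^{(i)})$), and your argument survives either reading, since any two distinct types still jointly impose both the $+$ and the $-$ option of some category.
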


\begin{proof}
It suffices to show that 
$\mathcal{T}_{X_2, R}^{(i)} \cap \mathcal{T}_{X_2, R}^{(j)} = \{(A, B) \in \mathcal{T}_{X_2, R} \mid A_{0,0,0}^2 = B_{0,0,0}^2\}$ holds for any $i, j \in \{1, 2, 3, 4, 5\}$ with $i \neq j$. 

Suppose that $(A, B) \in \mathcal{T}_{X_2, R}^{(1)} \cap \mathcal{T}_{X_2, R}^{(2)}$. 
Then, since $A_{0,1,1} = A_{0,0,0} = A_{0,0,0}^3B_{0,0,0}^{-2}$ by Lemma~\ref{lem:type}, we have $A_{0,0,0}^2 = B_{0,0,0}^2$. 
Thus $(A, B)$ is contained in $\{(A, B) \in \mathcal{T}_{X_2, R} \mid A_{0,0,0}^2 = B_{0,0,0}^2\}$. 
Conversely, suppose that $(A, B) \in \{(A, B) \in \mathcal{T}_{X_2, R} \mid A_{0,0,0}^2 = B_{0,0,0}^2\}$. 
Then, since $A_{0,0,0}^2 = B_{0,0,0}^2$, we have 
    \begin{itemize}
        \item $B_{0,0,1} = A_{0,0,0}B_{0,0,0}^{-1}A_{0,0,1} = A_{0,0,0}^{-1}B_{0,0,0}A_{0,0,1}$; 
        \item $B_{0,1,0} = A_{0,0,0}B_{0,0,0}^{-1}A_{0,1,0} = A_{0,0,0}^{-1}B_{0,0,0}A_{0,1,0}$; and 
        \item $A_{0,1,1} = A_{0,0,0} = A_{0,0,0}^3B_{0,0,0}^{-2}$.
    \end{itemize}
Thus, by the definitions of tribracket brackets of types 1 and 2, $(A, B)$ is contained in $\mathcal{T}_{X_2, R}^{(1)} \cap \mathcal{T}_{X_2, R}^{(2)}$. 
Consequently, we have $\mathcal{T}_{X_2, R}^{(1)} \cap \mathcal{T}_{X_2, R}^{(2)} = \{(A, B) \in \mathcal{T}_{X_2, R} \mid A_{0,0,0}^2 = B_{0,0,0}^2\}$. 

The other cases can be shown in the same way. 
\end{proof}

By Corollary~\ref{cor:intersection}, we can compute the cardinality of $\mathcal{T}_{X_2, R}$ when $R$ is a finite field as follows. 

\begin{corollary}
We have $|\mathcal{T}_{X_2, \mathbb{Z}/2\mathbb{Z}}| = 1$, and for a prime number $p \neq 2$, we have 
    $|\mathcal{T}_{X_2, \mathbb{Z}/p\mathbb{Z}}| = 5(p-1)^5 - 8(p-1)^4$. 
\end{corollary}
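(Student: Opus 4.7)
The plan is to reduce everything to counting ring homomorphisms $f:\Zpoly\to R$ via the correspondence in Theorem~\ref{thm:iff_cond}, and then apply inclusion–exclusion using the uniform description of intersections in Corollary~\ref{cor:intersection}. First, for $p=2$, note that $R^\times=\{1\}$, so every ring homomorphism $\Zpoly\to\mathbb{Z}/2\mathbb{Z}$ sends each $x_j$ to $1$. By Theorem~\ref{thm:iff_cond} the only candidate for a tribracket bracket is the constant pair $(1,1)$, which directly satisfies Definition~\ref{def:tribracketbracket} with $\delta=0$ and $w=1$. Hence $|\mathcal{T}_{X_2,\mathbb{Z}/2\mathbb{Z}}|=1$.

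For an odd prime $p$, fix $i\in\{1,\ldots,5\}$ and consider the map
\[\Phi_i:\mathrm{Hom}_{\mathrm{Ring}}(\Zpoly,\mathbb{Z}/p\mathbb{Z})\longrightarrow\mathcal{T}^{(i)}_{X_2,\mathbb{Z}/p\mathbb{Z}},\qquad f\longmapsto(f\circ A^{(i)},f\circ B^{(i)}).\]
It is well defined because every entry of $A^{(i)},B^{(i)}$ is a Laurent monomial in the $x_j$ (hence maps to a unit), and the tribracket-bracket axioms together with the type-$i$ conditions are polynomial identities preserved by $f$. Surjectivity is Theorem~\ref{thm:iff_cond}. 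Injectivity follows because $x_1,x_2,x_3,x_4,x_5$ occur as the entries $A^{(i)}_{0,0,0}, A^{(i)}_{0,0,1}, A^{(i)}_{0,1,0}, A^{(i)}_{1,0,1}, B^{(i)}_{0,0,0}$, so the image determines $f$ on the generators. Since ring homomorphisms $\Zpoly\to\mathbb{Z}/p\mathbb{Z}$ are parametrized by assignments of each $x_j$ to a unit in $\mathbb{Z}/p\mathbb{Z}$, we conclude $|\mathcal{T}^{(i)}_{X_2,\mathbb{Z}/p\mathbb{Z}}|=(p-1)^5$.

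Next, by Corollary~\ref{cor:intersection}, every intersection of at least two of the $\mathcal{T}^{(i)}$ equals $I:=\{(A,B)\in\mathcal{T}_{X_2,\mathbb{Z}/p\mathbb{Z}}\mid A_{0,0,0}^{2}=B_{0,0,0}^{2}\}$. Viewing $I\subseteq\mathcal{T}^{(1)}_{X_2,\mathbb{Z}/p\mathbb{Z}}$ and pulling back through $\Phi_1$, counting $|I|$ amounts to counting tuples $(x_1,\ldots,x_5)\in((\mathbb{Z}/p\mathbb{Z})^{\times})^{5}$ with $x_1^{2}=x_5^{2}$. Since $p$ is odd this forces $x_1=\pm x_5$, giving $2(p-1)$ choices for $(x_1,x_5)$ together with $(p-1)^{3}$ free choices for $(x_2,x_3,x_4)$, so $|I|=2(p-1)^{4}$.

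Finally, applying inclusion–exclusion to $\mathcal{T}_{X_2,\mathbb{Z}/p\mathbb{Z}}=\bigcup_{i=1}^{5}\mathcal{T}^{(i)}_{X_2,\mathbb{Z}/p\mathbb{Z}}$ and using that all $k$-fold intersections with $k\geq 2$ coincide with $I$ yields
\[|\mathcal{T}_{X_2,\mathbb{Z}/p\mathbb{Z}}|=5(p-1)^{5}+|I|\sum_{k=2}^{5}(-1)^{k+1}\binom{5}{k}=5(p-1)^{5}-4|I|=5(p-1)^{5}-8(p-1)^{4},\]
using $-\binom{5}{2}+\binom{5}{3}-\binom{5}{4}+\binom{5}{5}=-4$. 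The only nontrivial step is the bijectivity of $\Phi_i$ (in particular that the type-$i$ conditions and all axioms survive arbitrary ring-homomorphic specialization of the $x_j$ to units); once this is secured, the remainder is routine combinatorics.
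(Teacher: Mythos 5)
Your proof is correct and takes essentially the same route as the paper's: both use Theorem~\ref{thm:iff_cond} to identify each $\mathcal{T}^{(i)}_{X_2,\mathbb{Z}/p\mathbb{Z}}$ with the $(p-1)^5$ ring homomorphisms $\Zpoly \to \mathbb{Z}/p\mathbb{Z}$, use Corollary~\ref{cor:intersection} to identify every higher intersection with $\{(A,B) \mid A_{0,0,0}^2 = B_{0,0,0}^2\}$, of cardinality $2(p-1)^4$, and finish by inclusion--exclusion to get $5(p-1)^5 - 8(p-1)^4$. The only difference is that you make explicit some points the paper leaves implicit, namely the injectivity of the parametrization $f \mapsto (f\circ A^{(i)}, f\circ B^{(i)})$ and the computation of the coefficient $-4$.
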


\begin{proof}
 When $p = 2$, we immediately have $|\mathcal{T}_{X_2, \mathbb{Z}/2\mathbb{Z}}| = 1$, as $(\mathbb{Z}/2\mathbb{Z})^\times = \{1\}$. 
 
Suppose that $p$ is a prime number other than $2$. 
Then, we have $|\bigcap_{i=1}^5 \mathcal{T}^{(i)}_{X_2, R}| = |\{(A, B) \in \mathcal{T}_{X_2, R} \mid A_{0,0,0}^2 = B_{0,0,0}^2\}| = 2(p-1)^4$, because 
there are exactly $(p-1)^5$ ring homomorphisms $\mathbb{Z}[x_1^{\pm1}, \ldots, x_5^{\pm1}] \to (\mathbb{Z}/p\mathbb{Z})^\times$. 
Further, there are exactly $2(p-1)^4$ ring homomorphisms $f: \mathbb{Z}[x_1^{\pm1}, \ldots, x_5^{\pm1}] \to (\mathbb{Z}/p\mathbb{Z})^\times$ such that $f(x_1)^2 = f(x_5)^2$. 
This implies that $|\bigcap_{i=1}^5 \mathcal{T}^{(i)}_{X_2, R}| = |\{(A, B) \in \mathcal{T}_{X_2, R} \mid A_{0,0,0}^2 = B_{0,0,0}^2\}| = 2(p-1)^4$. 
Therefore, it follows from Corollary~\ref{cor:intersection} that 
\[
|\mathcal{T}_{X_2, \mathbb{Z}/p\mathbb{Z}}| = \sum_{i=1}^5|\mathcal{T}_{X_2, \mathbb{Z}/p\mathbb{Z}}^{(i)}| - 4|\bigcap_{i=1}^5 \mathcal{T}^{(i)}_{X_2, R}| = 5(p-1)^5 - 8(p-1)^4.
\]
\end{proof}

\section{Quantum enhancement invariants associated with universal tribracket brackets}
\label{sec:Quantum enhancement invariants associated with universal tribracket brackets}

%\section{普遍 tribracket bracket を用いた量子 enhancement 多項式}

Let $(A^{(i)}, B^{(i)})$ $(i \in \{1, \ldots, 5\})$ be the universal tribracket brackets with respect to $X_2$. In this section, we discuss the relationships between the quantum enhancement polynomials $\Phi^{(A^{(i)}, B^{(i)})}$, which we call the \emph{universal quantum enhancement polynomials}, and certain other link invariants.

\subsection{The universal quantum enhancement polynomials $\Phi^{(A^{(i)}, B^{(i)})}$ and linking numbers}
\label{subsec:The_universal_quantum_enhancement_polynomials_and_linking_numbers}

Let $L$ be a link and $L_1, L_2 \subset L$ be (possibly empty) sublinks of $L$ such that $L = L_1 \cup L_2$ and $L_1 \cap L_2 = \emptyset$. Let $D_L$ be a diagram of $L$, and $D_{L_i} \subset D_L$ be a diagram of $L_i$ for $i=1,2$. We denote by $\mathcal{C}(D_1, D_2)$ the set of crossings where $D_2$ crosses under $D_1$. Then, the \emph{linking number} $\mathrm{lk}(L_1, L_2)$ of $L_1$ and $L_2$ is defined as the sum $\sum_{c \in \mathcal{C}(D_1, D_2)} \text{sign}(c)$.

Next, we use the same notation as in Definition~\ref{def:beta}. For a state $s : \mathcal{C}(D) \to \{A, B\}$ of $D_L$, we define
\[
    \beta^{(A, B)}_{X_2}(D_L, C, s) = w^{n-p} \prod_{c \in \mathcal{C}(D_L)} s(c)(\text{cl}_C(c))^{\text{sign}(c)}.
\]
With this notation, we can express
\[
    \beta_{X_2}^{(A, B)}(D_L, C) = \sum_{s \in \mathcal{S}(D_L)} \delta^{k_s} \beta_{X_2}^{(A, B)}(D_L, C, s).
\]

For a monomial $f = a \prod_{i=1}^5 x_i^{k_i} \in \mathbb{Z}[\text{poly}]$, we set $\deg_{x_i}(f) = k_i$ for each $i \in \{1, \ldots ,5\}$.

\begin{proposition}\label{prop:linking_num}
   Let $L = L_1 \cup L_2$ be a link and $D_L$ a diagram of $L$.  
   Let $\mathcal{D}_1$ and $\mathcal{D}_2$ be the sets of diagrams of components of $L_1$ and $L_2$, respectively.    
   Then, for any $i \in \{1,2,3,4,5\}$, $a \in X_2$, and any state $s : \mathcal{C}(D) \to \{A^{(i)}, B^{(i)}\}$, we have
   \[
        \deg_{x_2}(\beta^{(A^{(i)}, B^{(i)})}_{X_2}(D_L,  C(a, \mathcal{D}_1, \mathcal{D}_2), s))
        = \deg_{x_3}(\beta^{(A^{(i)}, B^{(i)})}_{X_2}(D_L,  C(a, \mathcal{D}_1, \mathcal{D}_2), s)),
   \]
   and both are equal to $\mathrm{lk}(L_1, L_2)$.
\end{proposition}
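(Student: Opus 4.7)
The plan is to compute $\deg_{x_2}$ and $\deg_{x_3}$ of the monomial $\beta = \beta_{X_2}^{(A^{(i)}, B^{(i)})}(D_L, C, s)$ crossing by crossing and match the result with the linking number. First I would inspect the five tables defining $(A^{(i)}, B^{(i)})$ and observe the following uniform pattern: for every $i \in \{1, \ldots, 5\}$ and every triple $(a,b,c) \in X_2^3$, the entries $A^{(i)}_{a,b,c}$ and $B^{(i)}_{a,b,c}$ agree in $x_2$- and $x_3$-degree; specifically $\deg_{x_2}$ is $1$ on $\{(0,0,1),(1,1,0)\}$ and $0$ elsewhere, while $\deg_{x_3}$ is $1$ on $\{(0,1,0),(1,1,0)\}$ and $0$ elsewhere. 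Moreover, $w = -A^{(i)}_{0,0,0}{}^{2}B^{(i)}_{0,0,0}{}^{-1} = -x_1^{2} x_5^{-1}$ in every type, so the factor $w^{n-p}$ contributes nothing to either degree. Hence, regardless of the state $s$,
\begin{align*}
\deg_{x_2}(\beta) &= \sum_{c \,:\, \cl_C(c) \in \{(0,0,1),(1,1,0)\}} \sign(c), \\
\deg_{x_3}(\beta) &= \sum_{c \,:\, \cl_C(c) \in \{(0,1,0),(1,1,0)\}} \sign(c).
\end{align*}

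The next step is to describe the fibers of $\cl_C$ for $C = C(a, \mathcal{D}_1, \mathcal{D}_2)$, using the local pictures already laid out in Figures~\ref{fig:crossing_case1}--\ref{fig:crossing_case3}. Monochromatic crossings (both strands in $\mathcal{D}_1$ or both in $\mathcal{D}_2$) yield $\cl_C(c) \in \{(0,0,0),(1,1,1),(0,1,1),(1,0,0)\}$ and therefore do not contribute. For mixed crossings, running through the four cases determined by the crossing sign and by which of $\mathcal{D}_1,\mathcal{D}_2$ contains the over-strand, and checking both possible colorings of the surrounding regions in each case, yields the clean dichotomy: $\cl_C(c) \in \{(0,0,1),(1,1,0)\}$ precisely when the over-strand lies in $L_1$, and $\cl_C(c) \in \{(0,1,0),(1,0,1)\}$ precisely when the over-strand lies in $L_2$. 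The $x_2$ claim is then immediate from the definition of the linking number: $\deg_{x_2}(\beta) = \sum_{c \,:\, \text{over in } L_1,\, \text{under in } L_2} \sign(c) = \mathrm{lk}(L_1, L_2)$.

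The set $\{(0,1,0),(1,1,0)\}$ governing $\deg_{x_3}$ straddles both over/under types, so one further ingredient is needed. Here I would apply the state-level identity established in the proof of Proposition~\ref{prop:comp_color}, namely $\prod_c s(c)(\cl_C(c))^{\sign(c)} = \prod_c s(c)(\cl_{\overline{C}}(c))^{\sign(c)}$. Since $w^{n-p}$ is independent of $C$, this gives $\beta(D_L, C, s) = \beta(D_L, \overline{C}, s)$ as monomials, and using $\cl_{\overline{C}}(c) = \cl_C(c) + (1,1,1)$ componentwise, one obtains the alternative expression $\deg_{x_3}(\beta) = \sum_{c \,:\, \cl_C(c) \in \{(0,0,1),(1,0,1)\}} \sign(c)$. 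Adding this to the original formula, the right-hand side becomes a signed sum over all mixed crossings, which by the symmetry $\mathrm{lk}(L_1, L_2) = \mathrm{lk}(L_2, L_1)$ equals $2\,\mathrm{lk}(L_1, L_2)$, so $\deg_{x_3}(\beta) = \mathrm{lk}(L_1, L_2)$. I expect the main obstacle to be the bookkeeping in the middle-paragraph case analysis, where the correspondence between coloring triples and over/under assignment must be verified uniformly through positive and negative crossings, both choices of over-strand, and both possible surrounding colorings; once that identification is in hand the $\deg_{x_3}$ statement follows from the complementary-coloring symmetry with no further ad hoc work.
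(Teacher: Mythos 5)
Your proof is correct, and its first half is essentially the paper's: the observation that in every type $i$ the $x_2$-degree of an entry is $1$ exactly on the triples $(0,0,1),(1,1,0)$, the $x_3$-degree exactly on $(0,1,0),(1,1,0)$, that $w=-x_1^2x_5^{-1}$ contributes nothing, and that under $C(a,\mathcal{D}_1,\mathcal{D}_2)$ the fibers of $\cl_C$ sort mixed crossings by which sublink carries the over-strand, is the same bookkeeping the paper performs (the paper phrases it through the figure models rather than through the triples, and your triple-level phrasing is actually the cleaner of the two). Where you genuinely diverge is the $\deg_{x_3}$ half. The paper stays inside the counting framework: it expresses $\deg_{x_3}$ as a second signed count over mixed crossings and invokes Lemma~\ref{lem:coloring_pair} to convert that count into the one that computed $\deg_{x_2}$. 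You instead import the per-state identity $\prod_c s(c)(\cl_C(c))^{\sign(c)}=\prod_c s(c)(\cl_{\overline{C}}(c))^{\sign(c)}$ from the proof of Proposition~\ref{prop:comp_color}, shift indices via $\cl_{\overline{C}}(c)=\cl_C(c)+(1,1,1)$ to get a second expression for $\deg_{x_3}$, and average the two, using $\mathrm{lk}(L_1,L_2)=\mathrm{lk}(L_2,L_1)$. This is valid and non-circular, since Proposition~\ref{prop:comp_color} precedes Proposition~\ref{prop:linking_num} and does not depend on it. What your route buys is economy: it recycles an already-established symmetry instead of redoing a crossing count. What it costs is (i) you are quoting a fact established \emph{inside} the proof of Proposition~\ref{prop:comp_color} rather than its statement (the proposition-level equality of the $\delta^{k_s}$-weighted sums would not let you read off degrees of individual states), so that per-state identity would have to be promoted to a standalone lemma; and (ii) you invoke the classical symmetry of the linking number, which the paper never uses explicitly --- Lemma~\ref{lem:coloring_pair} is precisely its diagrammatic surrogate, and since the proof of Proposition~\ref{prop:comp_color} itself rests on that lemma, both arguments ultimately trace back to the same combinatorial fact, just packaged differently.
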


\begin{proof}
First, we show that for any $i \in \{1,2,3,4,5\}$, $a \in X_2$, and any state $s : \mathcal{C}(D) \to \{A^{(i)}, B^{(i)}\}$, we have $\deg_{x_2}(\beta'^{(A^{(i)}, B^{(i)})}(D_L, C(a, \mathcal{D}_1, \mathcal{D}_2), s)) = \mathrm{lk}(L_1, L_2)$.

Take $i \in \{1,2,3,4,5\}$, $a \in X_2$, a diagram $D_L$, and a state $s : \mathcal{C}(D) \to \{A^{(i)}, B^{(i)}\}$. Let $l_0^+, l_1^+, r_0^-, r_1^-$ denote the number of crossings in $D_L$ where the regions are colored as in Figure~\ref{fig:linking_num}, where the local models of the colorings are ordered from left to right.

\begin{figure}[htbp]
\centering
\includegraphics[width=1.2\textwidth]{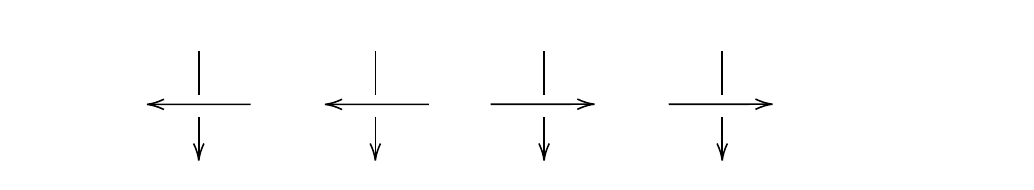}
\begin{picture}(400,0)(0,0)
\put(48,50){$D_i$}
\put(82,85){$D_j$}
\put(73,63){$0$}
\put(97,63){$0$}
\put(73,38){$1$}
\put(97,38){$1$}
\put(135,50){$D_i$}
\put(169,85){$D_j$}
\put(160,63){$1$}
\put(184,63){$1$}
\put(160,38){$0$}
\put(184,38){$0$}
\put(215,50){$D_i$}
\put(249,85){$D_j$}
\put(240,63){$0$}
\put(264,63){$1$}
\put(240,38){$0$}
\put(264,38){$1$}
\put(303,50){$D_i$}
\put(337,85){$D_j$}
\put(328,63){$1$}
\put(352,63){$0$}
\put(328,38){$1$}
\put(352,38){$0$}
\end{picture}
\caption{Local models of colorings by $C(a, \mathcal{D}_1, \mathcal{D}_2)$ at crossings of $\mathcal{C}(\mathcal{D}_1, \mathcal{D}_2)$.}
\label{fig:linking_num}
\end{figure}

By the definition of $C(a, \mathcal{D}_1, \mathcal{D}_2)$, each crossing in $\mathcal{C}(\mathcal{D}_1, \mathcal{D}_2)$ must follow one of the four models shown in Figure~\ref{fig:linking_num}. Thus, we can express $\mathrm{lk}(L_1, L_2)$ as $l_0^+ + l_1^+ - r_0^- - r_1^-$.

On the other hand, the following assertions hold regardless of $i \in \{1,2,3,4,5\}$ and the state $s : \mathcal{C}(D) \to \{A^{(i)}, B^{(i)}\}$:

\begin{itemize}
    \item If a crossing $c \in \mathcal{C}(D_L)$ is colored as shown on the leftmost or second-from-left in Figure~\ref{fig:coloring_pair}, then $\deg_{x_2}(s(c)(\text{cl}_{C(a, \mathcal{D}_1, \mathcal{D}_2)}(c))^{\text{sign}(c)}) = 1$.
    \item If a crossing $c \in \mathcal{C}(D_L)$ is colored as shown on the rightmost or second-from-right in Figure~\ref{fig:coloring_pair}, then $\deg_{x_2}(s(c)(\text{cl}_{C(a, \mathcal{D}_1, \mathcal{D}_2)}(c))^{\text{sign}(c)}) = -1$.
    \item If a crossing $c \in \mathcal{C}(D_L)$ does not follow any of the configurations shown in Figure~\ref{fig:coloring_pair}, then $\deg_{x_2}(s(c)(\text{cl}_{C(a, \mathcal{D}_1, \mathcal{D}_2)}(c))^{\text{sign}(c)}) = 0$.
\end{itemize}
From this, we have $\deg_{x_2}(\prod_{c \in \mathcal{C}(D_L)} s(c)(cl_C(c))^{\text{sign}(c)}) = l_0^+ + l_1^+ - r_0^- - r_1^-$. 
Furthermore, for any universal tribracket bracket $(A^{(i)}, B^{(i)})$, 
we have $w = -x_1^2x_5^{-1}, \delta = -x_1x_5^{-1} - x_1^{-1}x_5$. 
Since the expressions for $w$ and $\delta$ do not involve $x_2$, i.e., $\deg_{x_2}(w) = \deg_{x_2}(\delta) = 0$, it follows that
\[
    \deg_{x_2}(\beta^{(A^{(i)}, B^{(i)})}(D_L, C(a, \mathcal{D}_1, \mathcal{D}_2), s)) = \mathrm{lk}(L_1, L_2).
\]

Now, to complete the proof, it suffices to show that for any $i \in \{1,2,3,4,5\}$, $a \in X_2$, and any state $s : \mathcal{C}(D) \to \{A^{(i)}, B^{(i)}\}$, we also have
\[
    \deg_{x_2}(\beta^{(A^{(i)}, B^{(i)})}(D_L, C(a, \mathcal{D}_1, \mathcal{D}_2), s)) = \deg_{x_3}(\beta^{(A^{(i)}, B^{(i)})}(D_L, C(a, \mathcal{D}_1, \mathcal{D}_2), s)).
\]
Take $i \in \{1,2,3,4,5\}$, $a \in X_2$, a diagram $D_L$, and a state $s : \mathcal{C}(D) \to \{A^{(i)}, B^{(i)}\}$ arbitrarily. The following assertions hold regardless of $i \in \{1,2,3,4,5\}$ and the state $s : \mathcal{C}(D) \to \{A^{(i)}, B^{(i)}\}$. 
\begin{itemize}
    \item If a crossing $c \in \mathcal{C}(D_L)$ is colored as shown on the rightmost or the second from the left in Figure~\ref{fig:coloring_pair}, then we have $\deg_{x_3}(s(c)(\text{cl}_{C(a, \mathcal{D}_1, \mathcal{D}_2)}(c))^{\text{sign}(c)}) = 1$.
    \item If a crossing $c \in \mathcal{C}(D_L)$ is colored as shown on the leftmost or the second from the right in Figure~\ref{fig:coloring_pair}, then we have $\deg_{x_3}(s(c)(\text{cl}_{C(a, \mathcal{D}_1, \mathcal{D}_2)}(c))^{\text{sign}(c)}) = -1$.
    \item If a crossing $c \in \mathcal{C}(D_L)$ does not match any of the crossings shown in Figure~\ref{fig:coloring_pair}, then we have $\deg_{x_3}(s(c)(\text{cl}_{C(a, \mathcal{D}_1, \mathcal{D}_2)}(c))^{\text{sign}(c)}) = 0$.
\end{itemize}

Since $\deg_{x_3}(w) = \deg_{x_3}(\delta) = 0$ holds for any universal tribracket bracket $(A^{(i)}, B^{(i)})$, we have
\[
    \deg_{x_3}(\beta^{(A^{(i)}, B^{(i)})}(D_L, C(a, \mathcal{D}_1, \mathcal{D}_2), s)) = l_1^+ + r_1^+ - l_0^- - r_0^-.
\]
Furthermore, by Lemma~\ref{lem:coloring_pair}, we have $l_0^+ + l_0^- = r_1^+ + r_1^-$, hence $l_0^+ - r_1^- = r_1^+ - l_0^-$ holds. 
Therefore, we conclude
\begin{align*}
    \deg_{x_3}(\beta^{(A^{(i)}, B^{(i)})}(D_L, C(a, \mathcal{D}_1, \mathcal{D}_2), s))
    &= r_1^+ + l_1^+ - r_0^- - l_0^- \\
    &= l_0^+ + l_1^+ - r_0^- - r_1^- \\
    &= \deg_{x_2}(\beta^{(A^{(i)}, B^{(i)})}(D_L, C(a, \mathcal{D}_1, \mathcal{D}_2), s)).
\end{align*}
\end{proof}

\begin{lemma}\label{lem:non-zero}
Let $D_L$ be a diagram of $L$. 
Then, for any $X_2$-coloring $C$ of $D_L$, we have $\beta^{(A^{(i)}, B^{(i)})}(D_L, C) \neq 0$. 
\end{lemma}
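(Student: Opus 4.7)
The plan is to show that the Laurent polynomial $\beta^{(A^{(i)}, B^{(i)})}_{X_2}(D_L, C) \in \mathbb{Z}[x_1^{\pm 1}, \ldots, x_5^{\pm 1}]$ is nonzero by exhibiting a single evaluation point at which it becomes a nonzero integer. The natural choice is the ring homomorphism $\varphi \colon \mathbb{Z}[x_1^{\pm 1}, \ldots, x_5^{\pm 1}] \to \mathbb{Z}$ defined by $\varphi(x_j) = 1$ for every $j = 1, \ldots, 5$. A direct inspection of the formulas defining each of the five universal tribracket brackets shows that every entry of $A^{(i)}$ and $B^{(i)}$ is a Laurent monomial whose total exponent sum collapses to $0$ once the variables are identified; hence $\varphi(A^{(i)}_{a,b,c}) = \varphi(B^{(i)}_{a,b,c}) = 1$ for every triple $(a,b,c) \in X_2^3$ and every $i$. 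Consequently $\varphi(\delta) = -2$ and $\varphi(w) = -1$.

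After this specialization the coloring-dependent factor $\prod_{c} s(c)(\cl_C(c))^{\sign(c)}$ collapses to $1$ for every state $s$ and every $X_2$-coloring $C$, so
\[
\varphi\!\left(\beta^{(A^{(i)}, B^{(i)})}_{X_2}(D_L, C)\right) = (-1)^{n-p} \sum_{s \in \mathcal{S}(D_L)} (-2)^{k_s}.
\]
In particular, the value of the specialization is independent of the choice of $i$ and of the coloring $C$.

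Finally, I would identify the sum on the right-hand side with a known value of the Kauffman bracket. Under the standard convention $\langle O \rangle = 1$ with $d = -A^2 - A^{-2}$, one has $\sum_{s} (-2)^{k_s - 1} = \langle D_L \rangle \big|_{A = 1}$; combining this with the writhe-normalized formula $V(L)(t) = (-A)^{-3 w(D_L)}\langle D_L \rangle$ at $t = A^{-4}$ and with the well-known specialization $V(L)(1) = (-2)^{|L|-1}$, I obtain $\sum_{s} (-2)^{k_s} = (-1)^{w(D_L)}(-2)^{|L|}$, where $|L|$ denotes the number of components of $L$. Using $w(D_L) = p - n$, the two signs cancel, and therefore $\varphi(\beta^{(A^{(i)}, B^{(i)})}_{X_2}(D_L, C)) = (-2)^{|L|} \neq 0$; this forces $\beta^{(A^{(i)}, B^{(i)})}_{X_2}(D_L, C) \neq 0$ in the Laurent polynomial ring.

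The main obstacle I anticipate is bookkeeping rather than anything conceptual: one must verify for each of the five universal tribracket brackets that all sixteen entries of $A^{(i)}$ and $B^{(i)}$ indeed specialize to $1$ at $x_1 = \cdots = x_5 = 1$ (this is immediate but must be confirmed case by case), and one must track the writhe-induced sign $(-1)^{w(D_L)}$ carefully when converting between the Kauffman bracket and the Jones polynomial. Both steps are routine once they are laid out.
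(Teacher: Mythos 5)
Your proposal is correct, and its opening move is exactly the paper's: the paper also applies the augmentation map sending every $x_j$ to $1$, observes that every entry of $A^{(i)}$ and $B^{(i)}$ specializes to $1$ (your remark about exponent sums is a red herring here --- what matters is only that each entry is a Laurent monomial with coefficient $1$), hence $\delta \mapsto -2$ and $w \mapsto -1$, and so reduces the lemma to showing that the integer $(-1)^{n-p}\sum_{s}(-2)^{k_s}$ is nonzero. The two arguments diverge in how they evaluate this integer. The paper stays self-contained: after specialization the quantity is unchanged by crossing changes (all weights are $1$, and for $X_2$ the coloring condition is insensitive to over/under information, a point used tacitly), so one converts $D_L$ by crossing changes into a diagram of the unlink and then uses the Reidemeister invariance of $\beta$ (Theorem~\ref{theorem:invariance of beta by Aggarwal--Nelson--Rivera}) to evaluate on the crossingless diagram, obtaining $(-2)^{\#\text{components}}$. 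You instead identify $\sum_s (-2)^{k_s-1}$ with the Kauffman bracket $\langle D_L\rangle$ at $A=1$ --- which is legitimate, since summing over all states produces every smoothing configuration exactly once, regardless of how the labels $A,B$ correspond to smoothings at positive versus negative crossings --- and then import the classical facts $V(L)=(-A)^{-3w(D_L)}\langle D_L\rangle$ and $V(L)(1)=(-2)^{|L|-1}$; your sign bookkeeping $(-1)^{n-p}(-1)^{p-n}=1$ is right, and you land on the same value $(-2)^{|L|}$. The trade-off is self-containedness versus brevity: the standard proof of $V(1)=(-2)^{|L|-1}$ is precisely the crossing-change-to-unlink argument that the paper spells out, so your route repackages the same idea while leaning on external results about the Jones polynomial, whereas the paper's version needs nothing beyond what it has already proved; on the other hand, your version sidesteps the (unstated in the paper) verification that $X_2$-colorings persist under crossing changes.
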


\begin{proof}
Let $f : \mathbb{Z}[\text{poly}] \to \mathbb{Z}$ be the \emph{augmentation map}, that is, 
$f$ is a ring homomorphism such that $f(1) = 1$ and $f(x_j) = 1$ for any $j \in \{1,2,3,4,5\}$. 
Choose $i \in \{1,2,3,4,5\}$ arbitrarily. Then, for each $a, b, c \in X_2$, we have $f(A^{(i)}(a,b,c)) = 1$ and $f(B^{(i)}(a,b,c)) = 1$. 
By the definition of $\beta$, for a diagram $D_{L'}$ obtained by changing a crossing of $D_L$, we get
\[
    f( \beta^{(A^{(i)}, B^{(i)})}(D_L, C)) = f(\beta^{(A^{(i)}, B^{(i)})}(D_{L'}, C))
\]
for any $X_2$-coloring $C$ of $D_L$. Thus, by performing a finite number of crossing changes to transform $D_L$ into a diagram $D_{L''}$ representing the unlink, we have
\[
    f(\beta^{(A^{(i)}, B^{(i)})}(D_L, C)) = f(\beta^{(A^{(i)}, B^{(i)})}(D_{L''}, C)) = (-2)^n \neq 0,
\]
where $n$ is the number of components of $L$. Since $f$ is a ring homomorphism, it follows that $\beta^{(A^{(i)}, B^{(i)})}(D_L, C) \neq 0$.
\end{proof}

\begin{theorem}\label{thm:phi_linking_num}
Let $L = L_1 \cup L_2$ and $L' = L'_1 \cup L'_2$ be links, and let $D_L$ and $D_{L'}$ be diagrams of $L$ and $L'$, respectively.  
Let $\mathcal{D}_1$ and $\mathcal{D}_2$ be the sets of diagrams of the components of $L_1$ and $L_2$, respectively, 
and let $\mathcal{D}'_1$ and $\mathcal{D}'_2$ be the sets of diagrams of the components of $L'_1$ and $L'_2$, respectively. 
Take $i, j \in \{1,2,3,4,5\}$ and $a, a' \in X_2$ arbitrarily, so that at least one of $\beta^{(A^{(i)}, B^{(i)})}(D_L, C(a, \mathcal{D}_1, \mathcal{D}_2))$ 
or $\beta^{(A^{(j)}, B^{(j)})}(D_{L'}, C(a', \mathcal{D}'_1, \mathcal{D}'_2))$ is not zero. If $\mathrm{lk}(L_1, L_2) \neq \mathrm{lk}(L'_1, L'_2)$, 
then we have
\[
    \beta^{(A^{(i)}, B^{(i)})}(D_L, C(a, \mathcal{D}_1, \mathcal{D}_2)) \neq \beta^{(A^{(j)}, B^{(j)})}(D_{L'}, C(a', \mathcal{D}'_1, \mathcal{D}_2)).
\]
\end{theorem}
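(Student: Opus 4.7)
The plan is to exploit Proposition~\ref{prop:linking_num} to show that $\beta^{(A^{(i)}, B^{(i)})}(D_L, C(a, \mathcal{D}_1, \mathcal{D}_2))$ is, as a Laurent polynomial in $x_2$ over $\mathbb{Z}[x_1^{\pm 1}, x_3^{\pm 1}, x_4^{\pm 1}, x_5^{\pm 1}]$, a monomial of degree exactly $\mathrm{lk}(L_1, L_2)$. If both $\beta$ values are nonzero and the linking numbers differ, then the two polynomials cannot be equal because they live in distinct homogeneous components of $x_2$.

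In more detail, I first observe that for every state $s$, the expression $\beta^{(A^{(i)}, B^{(i)})}(D_L, C(a, \mathcal{D}_1, \mathcal{D}_2), s) = w^{n-p}\prod_{c \in \mathcal{C}(D_L)} s(c)(\cl_C(c))^{\sign(c)}$ is a single Laurent monomial in $x_1, \ldots, x_5$, since every $A^{(i)}_{a,b,c}$ and $B^{(i)}_{a,b,c}$ from Section~\ref{sec: The universal tribracket brackets for the canonical two-element tribraket} is a monomial and $w = -x_1^2 x_5^{-1}$ is also a monomial. Proposition~\ref{prop:linking_num} then gives $\deg_{x_2}$ of this monomial as $\mathrm{lk}(L_1, L_2)$. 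Next, I use that $\delta = -x_1 x_5^{-1} - x_1^{-1} x_5$ is independent of $x_2$, so every monomial appearing in the expansion of $\delta^{k_s}$ has $x_2$-degree $0$. Summing over $s$ therefore yields
\[
\beta^{(A^{(i)}, B^{(i)})}(D_L, C(a, \mathcal{D}_1, \mathcal{D}_2)) = x_2^{\mathrm{lk}(L_1, L_2)} \cdot Q(x_1, x_3, x_4, x_5)
\]
for some Laurent polynomial $Q$ that does not involve $x_2$, and similarly the $L'$-side equals $x_2^{\mathrm{lk}(L'_1, L'_2)} \cdot Q'(x_1, x_3, x_4, x_5)$.

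To finish, I invoke Lemma~\ref{lem:non-zero} to guarantee that both $\beta$ values are nonzero (which is stronger than the hypothesis that at least one is nonzero), hence neither $Q$ nor $Q'$ is the zero polynomial. If an equality $x_2^{\mathrm{lk}(L_1, L_2)} Q = x_2^{\mathrm{lk}(L'_1, L'_2)} Q'$ held, dividing through would give $Q = x_2^{\mathrm{lk}(L'_1, L'_2) - \mathrm{lk}(L_1, L_2)} Q'$, but the left side is free of $x_2$ while the right side is a nonzero scalar multiple of $x_2^{\mathrm{lk}(L'_1, L'_2) - \mathrm{lk}(L_1, L_2)}$, which is impossible whenever the linking numbers differ.

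I do not expect any serious obstacle: the whole argument is a direct consequence of the degree calculation of Proposition~\ref{prop:linking_num} and the nonvanishing Lemma~\ref{lem:non-zero}. The only mild check is to be careful that $\beta(D_L, C, s)$ is actually a monomial (rather than a polynomial) for each state $s$, so that the notion of ``$x_2$-degree'' coming from Proposition~\ref{prop:linking_num} can be unambiguously propagated through the state sum once one observes that the weights $w^{\pm 1}$ and $\delta$ introduce no $x_2$.
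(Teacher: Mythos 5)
Your proposal is correct and follows essentially the same route as the paper's proof: both use Lemma~\ref{lem:non-zero} to get nonvanishing and Proposition~\ref{prop:linking_num} (together with the fact that $w$ and $\delta$ are free of $x_2$) to conclude that every monomial of $\beta^{(A^{(i)}, B^{(i)})}(D_L, C(a,\mathcal{D}_1,\mathcal{D}_2))$ has $x_2$-degree equal to $\mathrm{lk}(L_1,L_2)$, so the two values lie in distinct $x_2$-homogeneous components and cannot coincide. Your explicit factorization $\beta = x_2^{\mathrm{lk}(L_1,L_2)}\,Q(x_1,x_3,x_4,x_5)$ merely spells out what the paper states more tersely.
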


\begin{proof}
Suppose that $\text{lk}(L_1, L_2) \neq \text{lk}(L'_1, L'_2)$. 
As we have seen in Lemma~\ref{lem:non-zero}, neither $\beta^{(A^{(i)}, B^{(i)})}(D_L, C(a, \mathcal{D}_1, \mathcal{D}_2))$ nor 
$\beta^{(A^{(j)}, B^{(j)})}(D_{L'}, C(a', \mathcal{D}'_1, \mathcal{D}'_2))$ is zero. 
Then, by Proposition~\ref{prop:linking_num}, the degree $\deg_{x_2}$ of each monomial of 
$\beta^{(A^{(i)}, B^{(i)})}(D_L, C(a, \mathcal{D}_1, \mathcal{D}_2))$ coincides with $\mathrm{lk}(L_1, L_2)$, 
while the degree $\deg_{x_2}$ of each monomial of $\beta^{(A^{(j)}, B^{(j)})}(D_{L'}, C(a', \mathcal{D}'_1, \mathcal{D}_2))$ 
coincides with $\mathrm{lk}(L'_1, L'_2)$. Since $\text{lk}(L_1, L_2) \neq \text{lk}(L'_1, L'_2)$ by assumption, it follows that
\[
    \beta^{(A^{(i)}, B^{(i)})}(D_L, C(a, \mathcal{D}_1, \mathcal{D}_2)) \neq \beta^{(A^{(j)}, B^{(j)})}(D_{L'}, C(a', \mathcal{D}'_1, \mathcal{D}_2)).
\]
\end{proof}

For a link $L$, let $\text{LK}(L)$ denote the multiset $\{\text{lk}(L_1, L_2) \mid L = L_1 \cup L_2\}$. From Theorem~\ref{thm:phi_linking_num}, we obtain the following result.

\begin{corollary}\label{cor:linking_num_multi}
Let $L$ and $L'$ be links. If $\Phi^{(A^{(i)}, B^{(i)})}_{X_2} (L) = \Phi^{(A^{(i)}, B^{(i)})}_{X_2} (L')$ for some $i \in \{1,2,3,4,5\}$, then we have $\text{LK}(L) = \text{LK}(L')$.
\end{corollary}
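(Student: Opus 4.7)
The plan is to translate the polynomial identity $\Phi^{(A^{(i)}, B^{(i)})}_{X_2}(L) = \Phi^{(A^{(i)}, B^{(i)})}_{X_2}(L')$ into an equality of multisets of $\beta$-values, and then push this equality forward through the map ``coloring $\mapsto$ linking number of the associated sublink decomposition'' using Theorem~\ref{thm:phi_linking_num}.

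First, I would expand $\Phi^{(A^{(i)}, B^{(i)})}_{X_2}(L) = \sum_{C \in \mathrm{Col}_{X_2}(D_L)} u^{\beta(D_L, C)}$ and view it as a formal polynomial in $u$ whose coefficients count the colorings producing each $\beta$-value. Thus $\Phi(L) = \Phi(L')$ is equivalent to an equality of multisets
\[ \{\beta(D_L, C) : C \in \mathrm{Col}_{X_2}(D_L)\} = \{\beta(D_{L'}, C') : C' \in \mathrm{Col}_{X_2}(D_{L'})\}, \]
with multiplicities. By Proposition~\ref{prop:coloring_number}, every such $C$ has the form $C(a, \mathcal{D}_1, \mathcal{D}_2)$, and $(\mathcal{D}_1, \mathcal{D}_2)$ determines a sublink decomposition $L = L_1 \cup L_2$ and a linking number $\mathrm{lk}(L_1, L_2)$.

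Next, I would invoke Theorem~\ref{thm:phi_linking_num} in its contrapositive form: whenever two colorings (possibly of different diagrams) produce the same $\beta$-value, the corresponding decompositions must have the same linking number. Lemma~\ref{lem:non-zero} guarantees that no $\beta$-value is zero, so the nonvanishing hypothesis of Theorem~\ref{thm:phi_linking_num} is automatically satisfied. Therefore the assignment $C(a, \mathcal{D}_1, \mathcal{D}_2) \mapsto \mathrm{lk}(L_1, L_2)$ descends to a well-defined map on the multiset of $\beta$-values, and the multiset equality above pushes forward to an equality
\[ \{\mathrm{lk}(L_1, L_2) : (a, \mathcal{D}_1, \mathcal{D}_2)\} = \{\mathrm{lk}(L'_1, L'_2) : (a', \mathcal{D}'_1, \mathcal{D}'_2)\} \]
of multisets of integers. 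Since each decomposition contributes exactly two colorings (one for each choice of $a \in X_2$), each side equals $\mathrm{LK}(L)$ and $\mathrm{LK}(L')$ respectively, each element doubled; canceling the factor of two gives $\mathrm{LK}(L) = \mathrm{LK}(L')$.

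The argument is a direct consequence of Theorem~\ref{thm:phi_linking_num}, which does the heavy lifting, so no substantial obstacle remains. The only points requiring care are the bookkeeping for the factor of two between colorings and decompositions (coming from the free choice of $a \in X_2$, equivalently from the complementary-coloring symmetry of Proposition~\ref{prop:comp_color}), and checking that Lemma~\ref{lem:non-zero} justifies the use of Theorem~\ref{thm:phi_linking_num} for every pair of colorings being compared.
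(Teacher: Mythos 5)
Your proof is correct and takes essentially the same route as the paper: the corollary is stated there as an immediate consequence of Theorem~\ref{thm:phi_linking_num}, and your argument---equality of $\Phi$ as equality of multisets of $\beta$-values, well-definedness of the map $\beta \mapsto \mathrm{lk}$ via the contrapositive of Theorem~\ref{thm:phi_linking_num} with Lemma~\ref{lem:non-zero} supplying the nonvanishing hypothesis, and the factor-of-two bookkeeping from Proposition~\ref{prop:coloring_number}---is exactly the deduction the paper leaves implicit.
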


\subsection{The universal quantum enhancement polynomials $\Phi^{(A^{(i)}, B^{(i)})}$ and Jones polynomials for links.}
\label{subsec:The_universal_quantum_enhancement_polynomials_and_Jones_polynomials_for_links}

Let $O$ be the trivial knot. For any link $L$, denote by $K(L)$ the Kauffman bracket polynomial of $L$, normalized as $K(O) = -x^2 - x^{-2}$. 
Also, denote by $J(L)$ the Jones polynomial of $L$, normalized as $J(O) = -t^{\frac{1}{2}} - t^{-\frac{1}{2}}$. 

\begin{theorem}\label{thm:jones_phi5}
Let $L$ and $L'$ be links. If $\Phi^{(A^{(5)}, B^{(5)})}_{X_2}(L) = \Phi^{(A^{(5)}, B^{(5)})}_{X_2}(L')$, then we have $J(L) = J(L')$. 
\end{theorem}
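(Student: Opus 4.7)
The plan is to find a ring homomorphism $\sigma \colon \Zpoly \to \mathbb{Z}[x^{\pm 1}]$ under which the pair $(A^{(5)}, B^{(5)})$ collapses to the constant pair used in the trivial-tribracket example following Definition~\ref{def:phi}. The natural candidate is $\sigma(x_i) = x$ for $i \in \{1,2,3,4\}$ and $\sigma(x_5) = x^{-1}$. A direct entry-by-entry check—relying on the particular sign pattern of the fifth universal bracket, where every off-diagonal entry of $B^{(5)}$ carries an $x_1^{-1}$ factor (as opposed to the mix of $x_1^{\pm 1}$ appearing in $B^{(1)},\dots,B^{(4)}$)—shows that $\sigma(A^{(5)}_{a,b,c}) = x$ and $\sigma(B^{(5)}_{a,b,c}) = x^{-1}$ for every $(a,b,c) \in X_2^3$. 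This in turn gives $\sigma(w) = -x^3$ and $\sigma(\delta) = -x^2 - x^{-2}$.

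The key consequence is that, under $\sigma$, the formula of Definition~\ref{def:beta} no longer depends on the coloring $C$. Since $s(c)(\cl_C(c))$ depends on $C$ only through the values $A^{(5)}(\cl_C(c))$ and $B^{(5)}(\cl_C(c))$, and these are washed out to $x$ and $x^{-1}$, the polynomial $\sigma\bigl(\beta^{(A^{(5)}, B^{(5)})}_{X_2}(D_L, C)\bigr) \in \mathbb{Z}[x^{\pm 1}]$ is the same for every $X_2$-coloring $C$ of $D_L$. Moreover the resulting expression is literally the one computed in the trivial-tribracket example, so it equals the Kauffman bracket $K(L)$ normalized by $K(O) = -x^2 - x^{-2}$; substituting $x = t^{-1/4}$ then turns it into $J(L)$.

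Next I would apply $\sigma$ termwise to the defining sum $\Phi^{(A^{(5)}, B^{(5)})}_{X_2}(L) = \sum_{C \in \mathrm{Col}_{X_2}(D_L)} u^{\beta(D_L, C)}$. All exponents collapse to the same polynomial $K(L)$, and by Proposition~\ref{prop:coloring_number} (and its extension in Corollary~\ref{cor:knot_coloring}) the sum has $2^{n+1}$ summands, where $n$ is the number of components of $L$. Thus
\[
\sigma\bigl(\Phi^{(A^{(5)}, B^{(5)})}_{X_2}(L)\bigr) = 2^{n+1}\, u^{K(L)} .
\]
From the hypothesis $\Phi^{(A^{(5)}, B^{(5)})}_{X_2}(L) = \Phi^{(A^{(5)}, B^{(5)})}_{X_2}(L')$, applying the same specialization to both sides and matching single monomials in the formal symbol $u$ forces both the coefficients and the exponents to agree, giving $K(L) = K(L')$ and hence $J(L) = J(L')$ after $x = t^{-1/4}$.

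The only step requiring genuine care is the initial claim that every entry of $A^{(5)}$ and $B^{(5)}$ is a monomial mapped by $\sigma$ to $x^{\pm 1}$: this is a finite mechanical check, but it is the place where the specific choice of signs in $(A^{(5)}, B^{(5)})$ really matters, and it is the only place where the argument would break if one tried to run it for $(A^{(i)}, B^{(i)})$ with $i \in \{1,2,3,4\}$. After this verification, the collapse of the $\Phi$-sum and the comparison of monomials are routine.
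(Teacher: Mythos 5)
Your proposal is correct and takes essentially the same route as the paper: your specialization $\sigma$ is exactly the homomorphism $f$ in the paper's proof of Theorem~\ref{thm:jones_phi5}, and both arguments collapse every exponent $\beta^{(A^{(5)},B^{(5)})}_{X_2}(D_L,C)$ to the Kauffman bracket $K(L)$ via Example~\ref{ex:jones} before comparing the resulting monomials in $u$. The only cosmetic difference is that you make the coefficient $2^{n+1}$ explicit via Corollary~\ref{cor:knot_coloring}, which the paper leaves implicit.
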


\begin{proof}
Suppose $\Phi^{(A^{(5)}, B^{(5)})}_{X_2}(L) = \Phi^{(A^{(5)}, B^{(5)})}_{X_2}(L')$. 
Define a ring homomorphism $f : \mathbb{Z}[\text{poly}] \to \mathbb{Z}[x]$ by 
\[
    f(1) = 1, \quad f(x_1) = x, \quad f(x_2) = x, \quad f(x_3) = x, \quad f(x_4) = x, \quad f(x_5) = x^{-1}.
\]
Then, for any $a,b,c \in X_2$, we have $f(A^{(5)}(a,b,c)) = x$ and $f(B^{(5)}(a,b,c)) = x^{-1}$. 
Let $D_L$ and $D_L'$ be diagrams of $L$ and $L'$, respectively. 
By the arguments in Example~\ref{ex:jones}, we get $f(\beta^{(A^{(5)}, B^{(5)})}_{X_2}(D_L, C)) = K(L)$ for any $X_2$-coloring $C$ of $D_L$. Therefore, we have 
\[
    \sum_{C \in \text{Col}(D_L)} u^{f(\beta^{(A^{(5)}, B^{(5)})}_{X_2}(D_L, C))} = \sum_{C \in \text{Col}(D_L)} u^{K(L)}
\]
and 
\[
    \sum_{C' \in \text{Col}(D_L')} u^{f(\beta^{(A^{(5)}, B^{(5)})}_{X_2}(D_L', C'))} = \sum_{C' \in \text{Col}(D_L')} u^{K(L')}.
\]
Since we are assuming that $\Phi^{(A^{(5)}, B^{(5)})}_{X_2}(L) = \Phi^{(A^{(5)}, B^{(5)})}_{X_2}(L')$, the equality 
\[
    \sum_{C \in \text{Col}(D_L)} u^{K(L)} = \sum_{C' \in \text{Col}(D_L')} u^{K(L')}
\]
holds. It follows that $K(L) = K(L')$, and equivalently, $J(L) = J(L')$. 
\end{proof}

From Theorem~\ref{thm:jones_phi5}, we see that the universal quantum enhancement polynomial $\Phi^{(A^{(5)}, B^{(5)})}_{X_2}$ is 
at least as strong as the Jones polynomial. In fact, we can assert more.

\begin{proposition}\label{prop_trivial_thistlethwaite}
The universal quantum enhancement polynomial $\Phi^{(A^{(5)}, B^{(5)})}_{X_2}$ is a strictly stronger invariant than the Jones polynomial.
\end{proposition}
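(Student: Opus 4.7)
The plan is to exhibit two concrete links with the same Jones polynomial whose values of $\Phi^{(A^{(5)}, B^{(5)})}_{X_2}$ differ; combined with Theorem~\ref{thm:jones_phi5}, this will establish the claim. A natural choice is the pair consisting of the two–component unlink $U_2$ and a two–component Thistlethwaite-style link $L_T$ with trivial Jones polynomial, i.e.\ $J(L_T) = J(U_2)$ (such $L_T$ exist by Thistlethwaite's construction of non-trivial links with trivial Jones polynomial).

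The first step is to compute $\Phi^{(A^{(5)}, B^{(5)})}_{X_2}(U_2)$ directly from the definition using the standard crossing-free diagram $D_{U_2}$. By Corollary~\ref{cor:knot_coloring}, there are $2^{2+1} = 8$ distinct $X_2$-colorings of $D_{U_2}$. For each coloring $C$, the set $\mathcal{S}(D_{U_2})$ consists of the single empty state $s$, for which the smoothed diagram has $k_s = 2$ circles; since $D_{U_2}$ has $p = n = 0$, we obtain
\[
\beta^{(A^{(5)}, B^{(5)})}_{X_2}(D_{U_2}, C) = w^{0}\, \delta^{2} = \delta^{2},
\]
where $\delta = -x_1 x_5^{-1} - x_1^{-1} x_5$ for the universal tribracket bracket $(A^{(5)}, B^{(5)})$. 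Consequently $\Phi^{(A^{(5)}, B^{(5)})}_{X_2}(U_2) = 8\, u^{\delta^{2}}$.

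The second step is to evaluate $\Phi^{(A^{(5)}, B^{(5)})}_{X_2}(L_T)$ on a fixed diagram of a chosen Thistlethwaite link $L_T$ with $J(L_T) = J(U_2)$. Here I would enumerate the $8$ $X_2$-colorings using the explicit parametrization in Proposition~\ref{prop:coloring_number}, pair them via Proposition~\ref{prop:comp_color} so that each complementary pair contributes the same summand, and then sum over states $s \in \mathcal{S}(D_{L_T})$ following Definition~\ref{def:beta}. The objective is to verify that the resulting polynomial in $u$ is not of the form $8\, u^{\delta^{2}}$; this can, for instance, be done by exhibiting a coloring $C$ for which $\beta^{(A^{(5)}, B^{(5)})}_{X_2}(D_{L_T}, C) \neq \delta^2$, or by showing that the sum has more than one distinct exponent.

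The main obstacle is managing the combinatorial explosion in the state sum for $L_T$: the number of states grows as $2^{|\mathcal{C}(D_{L_T})|}$, so one cannot expand everything naively by hand. The practical way to overcome this is to carry out the computation on a small explicit Thistlethwaite-type link using the computer-algebra computations referenced in the introduction, relying on Proposition~\ref{prop:comp_color} and Lemma~\ref{lem:tri_eq} to reduce redundant work. Once the discrepancy with $8\, u^{\delta^{2}}$ is confirmed for a single pair $(L_T, U_2)$, the conclusion follows immediately: $\Phi^{(A^{(5)}, B^{(5)})}_{X_2}$ distinguishes a pair that $J$ does not, while Theorem~\ref{thm:jones_phi5} guarantees it never fails to distinguish what $J$ distinguishes.
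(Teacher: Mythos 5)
Your proposal has the right top-level shape (exhibit a pair with equal Jones polynomial but different $\Phi^{(A^{(5)},B^{(5)})}_{X_2}$, then invoke Theorem~\ref{thm:jones_phi5}), and your computation $\Phi^{(A^{(5)},B^{(5)})}_{X_2}(U_2)=8u^{\delta^2}$ is correct. But the proof never establishes the one fact it actually needs: that $\Phi^{(A^{(5)},B^{(5)})}_{X_2}(L_T)\neq 8u^{\delta^2}$ for your chosen $L_T$. The phrases ``I would enumerate,'' ``the objective is to verify,'' and ``once the discrepancy \ldots is confirmed'' defer the decisive step to an unspecified machine computation, so as written this is a strategy, not a proof.

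The deeper problem is that your choice of pair is expected to make that verification \emph{fail}. For a $2$-component link the Jones polynomial determines the linking number of the two components (by the standard evaluation $V_L'(1)=-3(-2)^{c-2}\lambda(L)$ for a $c$-component link $L$ with total linking number $\lambda(L)$), so any $2$-component $L_T$ with $J(L_T)=J(U_2)$ has $\mathrm{lk}=0$, hence $\mathrm{LK}(L_T)=\mathrm{LK}(U_2)$. This removes the only mechanism available in the paper for separating links with equal Jones polynomial, namely the linking-number obstruction of Proposition~\ref{prop:linking_num}; and the paper's own evidence (Proposition~\ref{prop:computer experiments for Phi and (LK, J)}, where $X_2^{(5)}=\emptyset$, together with the conjecture that $\Phi^{(A^{(5)},B^{(5)})}_{X_2}$ is equivalent to the pair $(J,\mathrm{LK})$) indicates that $\Phi^{(A^{(5)},B^{(5)})}_{X_2}$ very likely does \emph{not} distinguish your pair. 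The paper avoids both difficulties by taking the $3$-component Thistlethwaite link $L$ against the $3$-component unlink $O$: there $J(L)=J(O)$ by \cite{Thi01}, yet the thin/thick sublink decomposition of $L$ has $\mathrm{lk}(L_1,L_2)=-2$. By Proposition~\ref{prop:linking_num}, every monomial of $\beta^{(A^{(i)},B^{(i)})}_{X_2}(D_L,C(a,\mathcal{D}_1,\mathcal{D}_2))$ has $\deg_{x_2}=-2$, whereas every exponent occurring in $\Phi^{(A^{(i)},B^{(i)})}_{X_2}(O)$ has all monomials of $x_2$-degree $0$; since Lemma~\ref{lem:non-zero} guarantees this $\beta$ is nonzero, the two formal polynomials differ --- with no state-sum expansion at all, and simultaneously for every $i\in\{1,\ldots,5\}$. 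To repair your argument, replace the pair $(L_T,U_2)$ by $(L,O)$ and replace the brute-force computation by this degree argument.
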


\begin{proof}
Let $O$ be the 3-component trivial link, and let $L$ be a link whose diagram is shown in Figure~\ref{fig:thistlethwaite}. 
Then for any $i \in \{1,2,3,4,5\}$, we have $\Phi^{(A^{(i)}, B^{(i)})}_{X_2}(L) \neq \Phi^{(A^{(i)}, B^{(i)})}_{X_2}(O)$. 
\begin{figure}[htpb]
    \centering
    \includegraphics{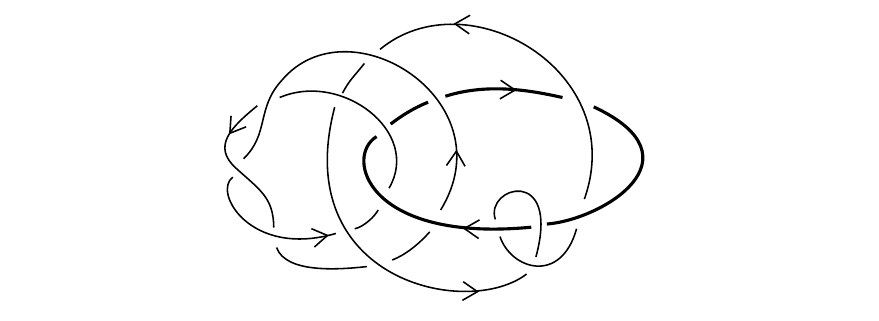}
    \caption{Thistlethwaite's link.}
    \label{fig:thistlethwaite}
\end{figure}

By Thistlethwaite~\cite{Thi01}, we have $J(L) = J(O)$. Now we show that $\Phi^{(A^{(i)}, B^{(i)})}_{X_2}(L) \neq \Phi^{(A^{(i)}, B^{(i)})}_{X_2}(O)$. 
Choose $i \in \{1,2,3,4,5\}$ arbitrarily. For any disjoint sublinks $O_1, O_2$, we have $\text{lk}(O_1, O_2) = 0$. 
Thus, by Proposition~\ref{prop:linking_num}, for any $X_2$-coloring $C$ of any diagram $D_O$ of $O$, the $\deg_{x_2}$ of each monomial of 
$\beta^{(A^{(i)}, B^{(i)})}_{X_2}(D_O, C)$ is zero. 

Let $D_L$ be the link diagram shown in Figure~\ref{fig:thistlethwaite}, where $\mathcal{D}_1$ denotes the diagram drawn with thin lines and 
$\mathcal{D}_2$ denotes the diagram drawn with thick lines. 
Let $L_1$ and $L_2$ be the sublinks of $L$ corresponding to 
$\mathcal{D}_1$ and $\mathcal{D}_2$. We can check that $\text{lk}(L_1, L_2) = -2$. 
Thus, by Proposition~\ref{prop:linking_num}, for any $a \in X_2$, the $\deg_{x_2}$ of each monomial of 
$\beta^{(A^{(i)}, B^{(i)})}_{X_2}(D_L, C(a, \mathcal{D}_1, \mathcal{D}_2))$ is $-2$. 
Therefore, we have $\Phi^{(A^{(i)}, B^{(i)})}_{X_2}(L) \neq \Phi^{(A^{(i)}, B^{(i)})}_{X_2}(O)$.
\end{proof}

The quantum enhancement polynomial of a $3$-component trivial link $O$ 
is calculated as $\Phi_{X_2}^{(A^{(i)}, B^{(i)})}(O) = 16u^{-x_1^{-3}x_5^3 -3x_1^{-1}x_5 -3x_1x_5^{-1} -x_1^{3}x_5^{-3}}$ for any $i \in \{1,2,3,4,5\}$. 

A list of computer-assisted calculations for $\Phi_{X_2}^{(A^{(i)}, B^{(i)})}(L)$ 
, specifically for Thistlethwaite's link $L$, is provided in the Appendix.

\subsection{The quantum enhancement polynomials $\Phi^{(A^{(i)}, B^{(i)})}$ and Jones polynomials for knots.}
\label{subsec:The_quantum_enhancement_polynomials_and_Jones_polynomials_for_knots}

In Proposition~\ref{prop_trivial_thistlethwaite}, we have seen that the invariant $\Phi^{(A^{(5)}, B^{(5)})}_{X_2}$ is strictly stronger than the Jones polynomial for links. On the other hand, when restricting ourselves to knots only, we will see that $\Phi^{(A^{(i)}, B^{(i)})}_{X_2}$ is equivalent to the Jones polynomial for any $i \in \{1,2,3,4,5\}$.

\begin{notation}
Let $D_L$ be a diagram of a link $L$. Let $\text{Col}_{\text{TC}}(D_L)$ denote the set consisting of the two trivial $X_2$-colorings and the two checkerboard $X_2$-colorings of $D_L$. Then, for any $i \in \{1,2,3,4,5\}$, we define
\[
    \Phi^{(A^{(i)}, B^{(i)})}_{\text{TC}}(L) = \sum_{C \in \text{Col}_{\text{TC}}(D_L)} u^{\beta^{(A^{(i)}, B^{(i)})}(D_L, C)}.
\]
\end{notation}

\begin{lemma}\label{lem:sum_zero}
Let $D_L$ be a diagram of a link $L$, $C$ an $X_2$-coloring of $D_L$, and $s$ a state of $D_L$. Then, for any $i \in \{1,2,3,4,5\}$, we have $\deg(\beta'^{(i)}(D_L, C, s)) = 0$, where for a monomial $t \in \mathbb{Z}[\text{poly}]$, $\deg(t)$ denotes the sum $\sum_{i=1}^5 \deg_{x_i}(t)$.
\end{lemma}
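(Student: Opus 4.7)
The plan is to reduce the claim to a purely arithmetic bookkeeping statement about total degrees of monomials, exploiting a uniform feature of the five universal tribracket brackets: each entry $A^{(i)}_{a,b,c}$ and $B^{(i)}_{a,b,c}$ is a monomial in $\mathbb{Z}[x_1^{\pm1},\ldots,x_5^{\pm1}]$ of total degree exactly $1$. This is verified by straightforward inspection of the tables in the definitions of $(A^{(i)},B^{(i)})$; for instance, $x_1^3 x_5^{-2}$ has total degree $3+(-2)=1$, $x_2 x_3 x_4^{-1}$ has total degree $1+1+(-1)=1$, and so on for every entry of every $A^{(i)}$ and $B^{(i)}$.

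Given this, I first compute $\deg(w)$. By definition $w=-A^{(i)}_{a,b,b}{}^{2}B^{(i)}_{a,b,b}{}^{-1}$, so $\deg(w)=2\cdot 1-1=1$ (independent of $i$; one can check directly that $w=-x_1^{2}x_5^{-1}$ in all five cases, as was already noted in the proof of Proposition~\ref{prop:linking_num}). Next, since each factor $s(c)(\mathrm{cl}_C(c))$ is a monomial of total degree $1$, the factor $s(c)(\mathrm{cl}_C(c))^{\sign(c)}$ contributes total degree $\sign(c)$ to the product.

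Let $p$ and $n$ denote, as in Definition~\ref{def:beta}, the number of positive and negative crossings of $D_L$. Then
\[
\deg\!\Bigl(\prod_{c\in\mathcal{C}(D_L)} s(c)(\mathrm{cl}_C(c))^{\sign(c)}\Bigr)=\sum_{c\in\mathcal{C}(D_L)}\sign(c)=p-n,
\]
while
\[
\deg(w^{n-p})=(n-p)\cdot\deg(w)=n-p.
\]
Adding these yields $\deg(\beta^{(A^{(i)},B^{(i)})}_{X_2}(D_L,C,s))=(p-n)+(n-p)=0$, which is the desired conclusion.

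The only non-mechanical part of the argument is the initial uniform degree check for the entries of the five tribracket brackets, and even this is just a finite case-by-case verification against the tables defining $(A^{(i)},B^{(i)})$; there is no genuine obstacle. In particular, the argument is independent of which coloring $C$ and which state $s$ are chosen, since the total-degree bookkeeping only uses the sign of each crossing and the fact that every entry is a degree-$1$ monomial.
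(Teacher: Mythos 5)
Your proof is correct and follows essentially the same route as the paper's: both arguments rest on the observation that every entry of $A^{(i)}$ and $B^{(i)}$ is a monomial of total degree $1$, so the product over crossings has total degree $p-n$, while $w^{n-p}=(-x_1^2x_5^{-1})^{n-p}$ has total degree $n-p$, and these cancel. Your version is slightly more explicit about the finite degree check on the tables, but there is no substantive difference.
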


\begin{proof}
Let $p$ and $n$ denote the numbers of positive and negative crossings of $D_L$, respectively. Since $\deg(A^{(i)}_{a,b,c}) = \deg(B^{(i)}_{a,b,c}) = 1$ holds for any $i \in \{1,2,3,4,5\}$ and any $a, b, c \in X_2$, we get $\prod_{c \in \mathcal{C}(D_L)} s(c)(\text{cl}_C(c))^{\text{sign}(c)} = p-n$. The degree $\deg(w)$ of the distinguished element $w = -x_1^2x_5^{-1}$ of $(A^{(i)}, B^{(i)})$, which is independent of $i$, is one. Therefore, 
\[
\deg(\beta'^{(A^{(i)}, B^{(i)})}(D_L, C, s)) = \deg(w^{n-p} \prod_{c \in \mathcal{C}(D_L)} s(c)(\text{cl}_C(c))^{\text{sign}(c)}) = (n-p)+(p-n) = 0.
\]
\end{proof}

\begin{lemma}\label{lem:x1x5}
Let $D_L$ be a diagram of a link $L$. Then, for any $C \in \text{Col}_{\text{TC}}(D_L)$ and any $i \in \{1,2,3,4,5\}$, we have $\beta^{(A^{(i)}, B^{(i)})}(D_L, C) \in \mathbb{Z}[(x_1x_5^{-1})^{\pm1}]$.
\end{lemma}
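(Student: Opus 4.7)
The plan is to show two things about $\beta^{(A^{(i)}, B^{(i)})}(D_L, C)$ when $C \in \text{Col}_{\text{TC}}(D_L)$: (a) it lies in the subring $\mathbb{Z}[x_1^{\pm 1}, x_5^{\pm 1}] \subset \Zpoly$, and (b) every monomial appearing in it has total degree zero (in the sense of Lemma~\ref{lem:sum_zero}). Any Laurent monomial $x_1^p x_5^q$ with $p + q = 0$ equals $(x_1 x_5^{-1})^p$, so (a) and (b) together place $\beta^{(A^{(i)}, B^{(i)})}(D_L, C)$ in $\mathbb{Z}[(x_1 x_5^{-1})^{\pm 1}]$.

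For (a), I would identify which tensor entries of $A^{(i)}$ and $B^{(i)}$ actually arise in the defining formula. If $C$ is a trivial coloring, then $\cl_C(c) = (a, a, a)$ for a fixed $a \in X_2$ at every crossing $c$, so only $A^{(i)}_{a,a,a}$ and $B^{(i)}_{a,a,a}$ appear. If $C$ is a checkerboard coloring, then around each crossing the regions $y_c$ and $z_c$ are both adjacent to $x_c$, forcing $C(y_c) = C(z_c) = C(x_c) + 1$, and consequently $[x_c, y_c, z_c] = x_c + (x_c+1) - (x_c+1) = x_c$, so $\cl_C(c) \in \{(0,1,1),(1,0,0)\}$. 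A direct inspection of the tensors defining the five universal tribracket brackets shows that each of $A^{(i)}_{0,0,0}, A^{(i)}_{1,1,1}, A^{(i)}_{0,1,1}, A^{(i)}_{1,0,0}$, together with the corresponding $B^{(i)}$ entries, is a Laurent monomial in $x_1, x_5$ alone (one of $x_1$, $x_5$, $x_1^3 x_5^{-2}$, or $x_1^4 x_5^{-3}$ depending on $i$). Since $w = -x_1^2 x_5^{-1}$ and $\delta = -x_1 x_5^{-1} - x_1^{-1} x_5$ also involve only $x_1$ and $x_5$, every term in $\beta^{(A^{(i)}, B^{(i)})}(D_L, C)$ lies in $\mathbb{Z}[x_1^{\pm 1}, x_5^{\pm 1}]$.

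For (b), I would invoke Lemma~\ref{lem:sum_zero}, which asserts that each state contribution $\beta^{(A^{(i)}, B^{(i)})}(D_L, C, s)$ has total degree zero. Each monomial of $\delta$ also has total degree zero, so every monomial of $\delta^{k_s}$ does. Therefore every monomial in the expansion $\beta^{(A^{(i)}, B^{(i)})}(D_L, C) = \sum_s \delta^{k_s} \beta^{(A^{(i)}, B^{(i)})}(D_L, C, s)$ has total degree zero. Combined with (a), each monomial is of the form $c \cdot (x_1 x_5^{-1})^k$ for some $c \in \mathbb{Z}$ and $k \in \mathbb{Z}$, completing the argument. The only substantive work is the bookkeeping in (a), namely checking the ten entries of each pair $(A^{(i)}, B^{(i)})$ indexed by $(a,a,a)$ and $(a, a+1, a+1)$; beyond that, the conclusion follows formally and there is no serious obstacle.
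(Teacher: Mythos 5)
Your proof is correct and follows essentially the same route as the paper's: restrict $\cl_C(c)$ to the four triples $(0,0,0),(1,1,1),(0,1,1),(1,0,0)$, check by inspection that the corresponding entries of $(A^{(i)},B^{(i)})$ together with $w=-x_1^2x_5^{-1}$ and $\delta=-x_1x_5^{-1}-x_1^{-1}x_5$ involve only $x_1,x_5$, and combine this with the total-degree-zero statement of Lemma~\ref{lem:sum_zero}. The only cosmetic differences are that the paper runs the degree argument state by state, placing each $\beta^{(A^{(i)},B^{(i)})}(D_L,C,s)$ in $\mathbb{Z}[(x_1x_5^{-1})^{\pm1}]$ before summing against $\delta^{k_s}$, whereas you treat the full sum at once, and that the relevant entry count is eight per pair rather than ten --- both immaterial.
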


\begin{proof}
Let $C \in \text{Col}_{\text{TC}}(D_L)$ and $i \in \{1,2,3,4,5\}$, $a \in X_2$. We first show that $\beta'^{(i)}(D_L, C, s) \in \mathbb{Z}[(x_1x_5^{-1})^{\pm1}]$. Take a crossing $c$ of $D_L$ and a state $s : \mathcal{C}(D_L) \to \{A^{(i)}, B^{(i)}\}$ arbitrarily. Since $C$ is the trivial or checkerboard coloring, $\text{cl}_C(c)$ is one of $(0,0,0), (0,1,1), (1,0,0), (1,1,1)$. Thus, by the definition of $A^{(i)}, B^{(i)}$, we have $s(c)(\text{cl}_C(c)) \in \mathbb{Z}[x_1^{\pm1}, x_5^{\pm1}]$. 

As we have already seen, the distinguished element $w = -x_1^2x_5^{-1}$ of $(A^{(i)}, B^{(i)})$ is in $\mathbb{Z}[x_1^{\pm1}, x_5^{\pm1}]$. It follows that $\beta'^{(A^{(i)}, B^{(i)})}(D_L, C, s) \in \mathbb{Z}[x_1^{\pm1}, x_5^{\pm1}]$. Furthermore, by Lemma~\ref{lem:sum_zero}, we have $\deg(\beta'^{(A^{(i)}, B^{(i)})}(D_L, C, s)) = 0$, thus, $\beta'^{(A^{(i)}, B^{(i)})}(D_L, C, s) \in \mathbb{Z}[(x_1x_5^{-1})^{\pm1}]$. 

Since the element $\delta$, satisfying the condition~\ref{1} of Definition~\ref{def:tribracketbracket}, can be written as $\delta = -x_1x_5^{-1} - x_1^{-1}x_5$, each term of $\delta$ is in $\mathbb{Z}[(x_1x_5^{-1})^{\pm1}]$. Recalling that $\mathcal{S}(D_L)$ denotes the set of states of $D_L$, and $k_s$ denotes the number of circles in the diagram obtained by smoothing each crossing, we can write
\[
\beta^{(A^{(i)}, B^{(i)})}(D_L, C) = \sum_{s \in \mathcal{S}(D_L)} \delta^{k_s} \beta'^{(A^{(i)}, B^{(i)})}(D_L, C, s),
\]
thus, $\beta^{(A^{(i)}, B^{(i)})}(D_L, C)$ is in $\mathbb{Z}[(x_1x_5^{-1})^{\pm1}]$. 
\end{proof}

\begin{lemma}\label{lem:phi_tc}
Let $D_L$ be a diagram of a link $L$, and $C$ a trivial $X_2$-coloring of $D_L$. Let $f : \mathbb{Z}[x_1^{\pm1}, x_5^{\pm1}] \to \mathbb{Z}[x_1^{\pm1}, x_5^{\pm1}]$ be a ring homomorphism defined by $f(1) = 1, f(x_1) = x_1^{-1}, f(x_5) = x_5^{-1}$. Then, we have
\[
\Phi^{(A^{(i)}, B^{(i)})}_{\text{TC}}(L) = \left\{ 
\begin{array}{ll} 
4u^{\beta^{(A^{(i)}, B^{(i)})}(D_L, C)} &  (i \in \{1,3,5\}), \\
2u^{\beta^{(A^{(i)}, B^{(i)})}(D_L, C)} + 2u^{f(\beta^{(A^{(i)}, B^{(i)})}(D_L, C))} & (i \in \{2,4\}).
\end{array}
\right.
\]
\end{lemma}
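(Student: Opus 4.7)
The plan is to enumerate the four colorings in $\text{Col}_{\text{TC}}(D_L)$ and group them according to the value of $\beta^{(A^{(i)}, B^{(i)})}(D_L, \cdot)$. By definition $\text{Col}_{\text{TC}}(D_L)$ consists of two trivial colorings (every region colored by some $a \in X_2$) and two checkerboard colorings (unbounded region colored $a$); at any crossing $c$ one has $\cl_C(c) = (a, a, a)$ in the trivial case and $\cl_C(c) \in \{(0, 1, 1), (1, 0, 0)\}$ in the checkerboard case. Direct inspection of the definitions gives $A^{(i)}_{a, a, a} = x_1$ and $B^{(i)}_{a, a, a} = x_5$ for every $i$ and every $a \in X_2$, and Lemma~\ref{lem:tri_eq} yields $A^{(i)}_{0, 1, 1} = A^{(i)}_{1, 0, 0}$ and $B^{(i)}_{0, 1, 1} = B^{(i)}_{1, 0, 0}$ for every $i$. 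Since $w = -x_1^2 x_5^{-1}$ and $\delta = -x_1 x_5^{-1} - x_1^{-1} x_5$ are common to all of the universal tribracket brackets (as already used in the proof of Lemma~\ref{lem:x1x5}), the two trivial colorings yield a common value $\beta_{\text{triv}}$, and the two checkerboard colorings yield a common value $\beta_{\text{chk}}$.

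For $i \in \{1, 3, 5\}$ one reads off $A^{(i)}_{0, 1, 1} = x_1$ and $B^{(i)}_{0, 1, 1} = x_5$, matching the trivial-case values; hence $\beta_{\text{chk}} = \beta_{\text{triv}}$, and $\Phi^{(A^{(i)}, B^{(i)})}_{\text{TC}}(L) = 4 u^{\beta_{\text{triv}}}$ is immediate.

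For $i \in \{2, 4\}$ we instead have $A^{(i)}_{0, 1, 1} = x_1^3 x_5^{-2}$ and $B^{(i)}_{0, 1, 1} = x_1^4 x_5^{-3}$, and the plan is to establish $\beta_{\text{chk}} = f(\beta_{\text{triv}})$ state-by-state. For each state $s \in \mathcal{S}(D_L)$, set $e_A(s) = \sum_{c : s(c) = A^{(i)}} \sign(c)$ and $e_B(s) = \sum_{c : s(c) = B^{(i)}} \sign(c)$, so $e_A(s) + e_B(s) = p - n$. The contribution of $s$ to $\beta_{\text{triv}}$ is $w^{n-p} x_1^{e_A(s)} x_5^{e_B(s)}$, while its contribution to $\beta_{\text{chk}}$ is $w^{n-p} x_1^{3 e_A(s) + 4 e_B(s)} x_5^{-2 e_A(s) - 3 e_B(s)}$. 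A direct computation gives $f(w) = w^{-1}$ and $f(\delta) = \delta$, so the desired state-wise identity reduces to $w^{2(n-p)} = (x_1^{-2} x_5)^{2(e_A(s) + e_B(s))} = w^{-2(e_A(s) + e_B(s))}$, which is precisely the writhe identity $e_A(s) + e_B(s) = p - n$. Summing over $s$ with the $f$-invariant weights $\delta^{k_s}$ then yields $\beta_{\text{chk}} = f(\beta_{\text{triv}})$, and hence $\Phi^{(A^{(i)}, B^{(i)})}_{\text{TC}}(L) = 2 u^{\beta_{\text{triv}}} + 2 u^{f(\beta_{\text{triv}})}$. Note that Lemma~\ref{lem:x1x5} places $\beta_{\text{triv}}$ in the domain of $f$.

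The main obstacle is the exponent bookkeeping in the case $i \in \{2, 4\}$: one has to isolate the correct algebraic identity tying the writhe $p - n$ to the state-sum exponents $e_A(s) + e_B(s)$. Once that observation is in hand, the remainder of the argument is a transparent substitution using $f(w) = w^{-1}$ and $f(\delta) = \delta$.
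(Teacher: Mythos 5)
Your proof is correct and follows essentially the same route as the paper: enumerate the four colorings in $\mathrm{Col}_{\mathrm{TC}}(D_L)$, pair off the two trivial and the two checkerboard colorings using $A^{(i)}_{0,0,0}=A^{(i)}_{1,1,1}$, $A^{(i)}_{0,1,1}=A^{(i)}_{1,0,0}$ (and likewise for $B^{(i)}$), read off that all four values agree when $i\in\{1,3,5\}$, and relate the checkerboard value to $f$ of the trivial value when $i\in\{2,4\}$. The only difference is that the paper compresses the $i\in\{2,4\}$ case into the phrase ``using similar arguments,'' whereas you carry out the state-by-state exponent computation explicitly; your bookkeeping ($f(w)=w^{-1}$, $f(\delta)=\delta$, $k_s$ independent of the coloring, and the writhe identity $e_A(s)+e_B(s)=p-n$) is exactly the verification the paper leaves implicit, and it checks out.
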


\begin{proof}
Let $C$ and $C'$ be the two trivial $X_2$-colorings of $D_L$. 
For each crossing $c$ of $D_L$, $\text{cl}_C(c)$ is one of $(0,0,0)$ and $(1,1,1)$. 
Since $A^{(i)}_{0,0,0}$ coincides with $A^{(i)}_{1,1,1}$ and $B^{(i)}_{0,0,0}$ coincides with $B^{(i)}_{1,1,1}$ for any $i \in \{1,2,3,4,5\}$, $\beta^{(A^{(i)}, B^{(i)})}(D_L, C)$ coincides with $\beta^{(A^{(i)}, B^{(i)})}(D_L, C')$ for any $i \in \{1,2,3,4,5\}$. 
Similarly, we can check that the two checkerboard $X_2$-colorings $C_{\text{check}}$ and $C'_{\text{check}}$ of $D_L$ satisfy $\beta^{(A^{(i)}, B^{(i)})}(D_L, C_{\text{check}}) = \beta^{(A^{(i)}, B^{(i)})}(D_L, C'_{\text{check}})$. 

Suppose that $i = 1,3,5$. In this case, we have $A^{(i)}_{0,0,0} = A^{(i)}_{0,1,1} = A^{(i)}_{1,0,0} = A^{(i)}_{1,1,1} = x_1$ and $B^{(i)}_{0,0,0} = B^{(i)}_{0,1,1} = B^{(i)}_{1,0,0} = B^{(i)}_{1,1,1} = x_5$, hence we have $\beta^{(A^{(i)}, B^{(i)})}(D_L, C) = \beta^{(A^{(i)}, B^{(i)})}(D_L, C_{\text{check}})$. 
It thus follows that $\Phi^{(A^{(i)}, B^{(i)})}_{\text{TC}}(L) = 4u^{\beta^{(A^{(i)}, B^{(i)})}(D_L, C)}$. 

Now suppose that $i = 2, 4$. 
Using similar arguments for trivial $X_2$-colorings and checkerboard colorings, we find that $\beta^{(A^{(i)}, B^{(i)})}(D_L, C)$ coincides with $f(\beta^{(A^{(i)}, B^{(i)})}(D_L, C_{\text{check}}))$. Therefore, we conclude
\[
\Phi^{(A^{(i)}, B^{(i)})}_{\text{TC}}(L) = 2u^{\beta^{(A^{(i)}, B^{(i)})}(D_L, C)} + 2u^{f(\beta^{(A^{(i)}, B^{(i)})}(D_L, C))}.
\]
\end{proof}

\begin{theorem}\label{thm:jones_tc}
Let $L$ and $L'$ be links. Fix any $i \in \{1,2,3,4,5\}$. 
Then, $\Phi^{(A^{(i)}, B^{(i)})}_{\text{TC}}(L) = \Phi^{(A^{(i)}, B^{(i)})}_{\text{TC}}(L')$ if and only if $J(L) = J(L')$.
\end{theorem}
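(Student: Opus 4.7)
My plan is to reduce $\Phi_{\mathrm{TC}}^{(A^{(i)}, B^{(i)})}$ to the Kauffman bracket polynomial $K$ via Lemma~\ref{lem:phi_tc} and Example~\ref{ex:jones}, and then invoke the standard identification $K(L) = K(L') \iff J(L) = J(L')$. The structural observation driving this is that for a trivial $X_2$-coloring $C$ of a diagram $D_L$, every crossing $c$ satisfies $\mathrm{cl}_C(c) \in \{(0,0,0),(1,1,1)\}$, and hence $A^{(i)}(\mathrm{cl}_C(c)) = x_1$ and $B^{(i)}(\mathrm{cl}_C(c)) = x_5$ independently of $i$. Since $w = -x_1^2 x_5^{-1}$ and $\delta = -x_1 x_5^{-1} - x_1^{-1} x_5$ are also the same for every universal tribracket bracket, $\beta^{(A^{(i)}, B^{(i)})}(D_L, C)$ does not actually depend on $i$, and the substitution $x_1 \mapsto x$, $x_5 \mapsto x^{-1}$ turns it into $K(L)$ by the argument of Example~\ref{ex:jones}.

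By Lemma~\ref{lem:x1x5}, $\beta^{(A^{(i)}, B^{(i)})}(D_L, C)$ is a Laurent polynomial in $y = x_1 x_5^{-1}$, and the above specialization is $y \mapsto x^2$. Because the writhe-normalized Kauffman bracket only involves even powers of $x$, this specialization realizes a ring isomorphism $\mathbb{Z}[y^{\pm 1}] \xrightarrow{\sim} \mathbb{Z}[x^{\pm 2}]$, so $\beta^{(A^{(i)}, B^{(i)})}(D_L, C)$ and $K(L)$ determine one another. Together with $J(L) = K(L)|_{x = t^{-1/4}}$, this yields the key equivalence
\[
\beta^{(A^{(i)}, B^{(i)})}(D_L, C) = \beta^{(A^{(i)}, B^{(i)})}(D_{L'}, C') \iff J(L) = J(L'),
\]
valid for any trivial colorings $C$ and $C'$.

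The ``$\Leftarrow$'' direction is then immediate: if $J(L) = J(L')$, the $\beta$'s coincide, and Lemma~\ref{lem:phi_tc} (in either case formula) forces $\Phi_{\mathrm{TC}}^{(A^{(i)}, B^{(i)})}(L) = \Phi_{\mathrm{TC}}^{(A^{(i)}, B^{(i)})}(L')$. For ``$\Rightarrow$'' with $i \in \{1,3,5\}$, the formula $\Phi_{\mathrm{TC}} = 4u^{\beta}$ in Lemma~\ref{lem:phi_tc} manifestly recovers $\beta$ from $\Phi_{\mathrm{TC}}$, and hence $K(L)$ and $J(L)$. The hard part will be the case $i \in \{2,4\}$, where $\Phi_{\mathrm{TC}} = 2u^{\beta} + 2u^{f(\beta)}$, so an equality of $\Phi_{\mathrm{TC}}$'s only furnishes the unordered pair $\{\beta(L), f(\beta(L))\} = \{\beta(L'), f(\beta(L'))\}$. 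I plan to handle this by exploiting the explicit action $y \mapsto y^{-1}$ of $f$ on $\mathbb{Z}[y^{\pm 1}]$, combined with the structural constraints on the Kauffman bracket, to reconstruct $\beta(L)$ itself from this multiset; this case-analysis is the main technical obstacle I anticipate.
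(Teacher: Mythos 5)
Your reduction to the Kauffman bracket (Lemma~\ref{lem:x1x5} together with the isomorphism $x_1x_5^{-1}\mapsto x^2$, then Lemma~\ref{lem:phi_tc}), your ``$\Leftarrow$'' direction, and your ``$\Rightarrow$'' direction for $i\in\{1,3,5\}$ are exactly the paper's argument. The gap is the one you flag yourself: for $i\in\{2,4\}$ you never reconstruct $\beta(L)$ from the unordered pair $\{\beta(L),f(\beta(L))\}$; you only announce a plan to do so. That plan cannot be carried out, because the required information is genuinely absent from the multiset. Writing $y=x_1x_5^{-1}$, the involution $f$ acts by $y\mapsto y^{-1}$, which under $y\mapsto x^2$ is $x\mapsto x^{-1}$; since the normalized Kauffman bracket of the mirror image satisfies $K(L^*)=K(L)|_{x\mapsto x^{-1}}$, one gets $f(\beta^{(A^{(i)},B^{(i)})}(D_L,C))=\beta^{(A^{(i)},B^{(i)})}(D_{L^*},C)$ for a trivial coloring $C$. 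Hence for $i\in\{2,4\}$ Lemma~\ref{lem:phi_tc} reads $\Phi^{(A^{(i)},B^{(i)})}_{\mathrm{TC}}(L)=2u^{\beta(L)}+2u^{\beta(L^*)}$, an expression blind to mirroring, whereas $J$ is not. A concrete counterexample is given by the two mirror trefoils $L=T(2,3)$ and $L'=T(2,-3)$ (knots, so $\Phi_{\mathrm{TC}}=\Phi$ by Corollary~\ref{cor:knot_coloring}): applying Proposition~\ref{prop: torus_link} with the entries of $(A^{(2)},B^{(2)})$, the two trivial colorings of $L$ give the exponent $\beta_1=y^{-9}-y^{-5}-y^{-3}-y^{-1}$ and the two checkerboard colorings give $\beta_2=y^{9}-y^{5}-y^{3}-y$, while for $L'$ the same two exponents appear with the roles swapped; thus $\Phi^{(A^{(2)},B^{(2)})}_{\mathrm{TC}}(L)=2u^{\beta_1}+2u^{\beta_2}=\Phi^{(A^{(2)},B^{(2)})}_{\mathrm{TC}}(L')$, yet $J(L)\neq J(L')$ because the Jones polynomial of the trefoil is not symmetric under $t\mapsto t^{-1}$.

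So for $i\in\{2,4\}$ the case you deferred is not merely hard: the statement fails there, since $L$ and $L^*$ always produce the same multiset $\{\beta,f(\beta)\}$ but generally different Jones polynomials. You should also know that the paper's own proof contains precisely the hole you located: it notes that $\Phi_{\mathrm{TC}}(L)$ ``is determined by'' $\beta(D_L,C)$ and concludes the equivalence ``immediately,'' which silently invokes the converse implication (that $\Phi_{\mathrm{TC}}$ determines $\beta$); this converse holds for $i\in\{1,3,5\}$, where $\Phi_{\mathrm{TC}}=4u^{\beta}$, but fails for $i\in\{2,4\}$ because $\beta$ and $f(\beta)$ yield the same $\Phi_{\mathrm{TC}}$. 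What does survive for $i\in\{2,4\}$ is the weaker equivalence of $\Phi_{\mathrm{TC}}$ with the Jones polynomial up to the substitution $t\leftrightarrow t^{-1}$, i.e.\ with the pair $\{J(L),J(L^*)\}$; the clean statement of the theorem should therefore be restricted to $i\in\{1,3,5\}$. Your instinct that this case is the crux was exactly right; its resolution is a counterexample, not a longer case analysis.
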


\begin{proof}
Let $C$ and $C'$ be trivial $X_2$-colorings of $D_L$ and $D_{L'}$, respectively. 
Then, by Lemma~\ref{lem:x1x5}, we have $\beta^{(A^{(i)}, B^{(i)})}(D_L, C) \in \mathbb{Z}[(x_1x_5^{-1})^{\pm1}]$. Defining a ring homomorphism $f : \mathbb{Z}[(x_1x_5^{-1})^{\pm1}] \to \mathbb{Z}[x^{\pm2}]$ by $f(1) = 1, f(x_1x_5^{-1}) = x^2$, we get $f(\beta^{(A^{(i)}, B^{(i)})}(D_L, C)) = K(L)$ as in the proof of Theorem~\ref{thm:jones_phi5}. 
Noting that $f$ is a bijection, we see that $\beta^{(A^{(i)}, B^{(i)})}(D_L, C) = \beta^{(A^{(i)}, B^{(i)})}(D_{L'}, C')$ if and only if $K(L) = K(L')$, which is equivalent to the equality $J(L) = J(L')$.

Furthermore, as we have seen in Lemma~\ref{lem:phi_tc}, $\Phi^{(A^{(i)}, B^{(i)})}_{\text{TC}}(L)$ is determined by $\beta^{(A^{(i)}, B^{(i)})}(D_L, C)$. 
Therefore, the above observation immediately implies that $\Phi^{(A^{(i)}, B^{(i)})}_{\text{TC}}(L) = \Phi^{(A^{(i)}, B^{(i)})}_{\text{TC}}(L')$ if and only if $J(L) = J(L')$.
\end{proof}

\begin{corollary}
\label{cor: universal tribracket brackets invariants are euivalent to the Jones polynomial for knots}
Let $K$ and $K'$ be knots. Fix any $i \in \{1,2,3,4,5\}$. Then, $\Phi^{(A^{(i)}, B^{(i)})}(K) = \Phi^{(A^{(i)}, B^{(i)})}(K')$ if and only if $J(K) = J(K')$.
\end{corollary}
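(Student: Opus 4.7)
The plan is to observe that the corollary is essentially immediate from the combination of Corollary~\ref{cor:knot_coloring} and Theorem~\ref{thm:jones_tc}. Specifically, for a knot $K$, Corollary~\ref{cor:knot_coloring} tells us that the set of $X_2$-colorings of any diagram $D_K$ consists of exactly four elements, namely the two trivial $X_2$-colorings and the two checkerboard $X_2$-colorings. In other words, $\text{Col}_{X_2}(D_K) = \text{Col}_{\text{TC}}(D_K)$.

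Consequently, by the very definitions of $\Phi^{(A^{(i)}, B^{(i)})}_{X_2}$ and $\Phi^{(A^{(i)}, B^{(i)})}_{\text{TC}}$, we obtain the identity
\[
\Phi^{(A^{(i)}, B^{(i)})}_{X_2}(K) = \sum_{C \in \text{Col}_{X_2}(D_K)} u^{\beta^{(A^{(i)}, B^{(i)})}(D_K, C)} = \sum_{C \in \text{Col}_{\text{TC}}(D_K)} u^{\beta^{(A^{(i)}, B^{(i)})}(D_K, C)} = \Phi^{(A^{(i)}, B^{(i)})}_{\text{TC}}(K),
\]
and the analogous identity holds for $K'$.

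Applying Theorem~\ref{thm:jones_tc} with $L = K$ and $L' = K'$, we conclude that $\Phi^{(A^{(i)}, B^{(i)})}_{X_2}(K) = \Phi^{(A^{(i)}, B^{(i)})}_{X_2}(K')$ if and only if $\Phi^{(A^{(i)}, B^{(i)})}_{\text{TC}}(K) = \Phi^{(A^{(i)}, B^{(i)})}_{\text{TC}}(K')$, which holds if and only if $J(K) = J(K')$. There is no real obstacle here: all of the substantive content — the bijective correspondence between $\beta^{(A^{(i)}, B^{(i)})}(D_L, C)$ (for trivial $C$) and the Kauffman bracket polynomial via the ring homomorphism sending $x_1 x_5^{-1} \mapsto x^2$, together with the identification of trivial and checkerboard contributions in Lemma~\ref{lem:phi_tc} — has already been carried out in the proof of Theorem~\ref{thm:jones_tc}. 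The corollary simply records that, for knots, the restricted sum $\Phi^{(A^{(i)}, B^{(i)})}_{\text{TC}}$ agrees with the full invariant $\Phi^{(A^{(i)}, B^{(i)})}_{X_2}$.
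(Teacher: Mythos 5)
Your proposal is correct and follows exactly the same route as the paper's own proof: invoke Corollary~\ref{cor:knot_coloring} to identify $\text{Col}_{X_2}(D_K)$ with $\text{Col}_{\text{TC}}(D_K)$, deduce $\Phi^{(A^{(i)}, B^{(i)})}_{X_2}(K) = \Phi^{(A^{(i)}, B^{(i)})}_{\text{TC}}(K)$, and conclude by Theorem~\ref{thm:jones_tc}. Your write-up merely spells out the intermediate identity more explicitly than the paper does; there is no gap.
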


\begin{proof}
Let $D_K$ be a diagram of $K$. 
By Corollary~\ref{cor:knot_coloring}, $\text{Col}_{\text{TC}}(D_K)$ is nothing but $\text{Col}(D_K)$. 
Thus, we have $\Phi^{(A^{(i)}, B^{(i)})}_{\text{TC}}(K) = \Phi^{(A^{(i)}, B^{(i)})}(K)$, which implies the assertion from Theorem~\ref{thm:jones_tc}.
\end{proof}

\section{Computer Experiments}
\label{sec:Computer Experiments}

From the results in Section~\ref{sec:Quantum enhancement invariants associated with universal tribracket brackets}, 
we observe that the quantum enhancement polynomial based on the universal tribracket brackets for the canonical two-element tribracket $X_2$ is 
closely related to the Jones polynomial and the linking numbers. 

In this chapter, we analyze 1268 multi-component links with up to ten crossings, using the list from LinkInfo~\cite{LM24}, 
and present the results of computer calculations for five quantum enhancement polynomials $\Phi^{(A^{(i)}, B^{(i)})}_{X_2}(L)$, for $i \in \{1,2,3,4,5\}$, alongside the Jones polynomial $J(L)$, and the multiset $\text{LK}(L) = \{\text{lk}(L_1, L_2) \mid L = L_1 \cup L_2\}$ for these links.

Let $\mathcal{L}$ denote the set of 1268 multi-component links with up to ten crossings.

First, we summarize the results of the computations of $\Phi^{(A^{(i)}, B^{(i)})}_{X_2}(L)$ for $i \in \{1,2,3,4,5\}$ and all $L \in \mathcal{L}$. 

\begin{proposition}
\label{prop: computer calculations of Phi's}
The following holds:
    \begin{itemize}
        \item For each $L \in \mathcal{L}$, we have $\Phi^{(A^{(1)}, B^{(1)})}_{X_2}(L) = \Phi^{(A^{(3)}, B^{(3)})}_{X_2}(L)$. 
        \item For each $L \in \mathcal{L}$, we have $\Phi^{(A^{(2)}, B^{(2)})}_{X_2}(L) = \Phi^{(A^{(4)}, B^{(4)})}_{X_2}(L)$. 
        \item Let $i \in \{2, 4\}$ and $j \in \{1, 3\}$. If $\Phi^{(A^{(i)}, B^{(i)})}_{X_2}(L) = \Phi^{(A^{(i)}, B^{(i)})}_{X_2}(L')$ for $L, L' \in \mathcal{L}$, then $\Phi^{(A^{(j)}, B^{(j)})}_{X_2}(L) = \Phi^{(A^{(j)}, B^{(j)})}_{X_2}(L')$. 
        \item Let $i \in \{1, 2, 3, 4\}$. If $\Phi^{(A^{(i)}, B^{(i)})}_{X_2}(L) = \Phi^{(A^{(i)}, B^{(i)})}_{X_2}(L')$ for $L, L' \in \mathcal{L}$, then we have $\Phi^{(A^{(5)}, B^{(5)})}_{X_2}(L) = \Phi^{(A^{(5)}, B^{(5)})}_{X_2}(L')$. 
    \end{itemize}
\end{proposition}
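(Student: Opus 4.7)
The four claims are finite statements about the specified list $\mathcal{L}$ of $1268$ multi-component links with at most $10$ crossings, so the strategy is purely computational: evaluate $\Phi^{(A^{(i)}, B^{(i)})}_{X_2}(L)$ for every $L \in \mathcal{L}$ and every $i \in \{1, \ldots, 5\}$ in exact Laurent-polynomial arithmetic, then perform the required comparisons. The plan is to implement Definitions~\ref{def:tribracketbracket}, \ref{def:beta}, and \ref{def:phi} literally, using a list of PD-code diagrams drawn from LinkInfo \cite{LM24} as input.

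The algorithm proceeds link by link. For a diagram $D_L = D_1 \cup \cdots \cup D_n$, Proposition~\ref{prop:coloring_number} parametrizes $X_2$-colorings by pairs $(a, \mathcal{D}_1) \in X_2 \times 2^{\{D_1, \ldots, D_n\}}$, so $\mathrm{Col}_{X_2}(D_L)$ has exactly $2^{n+1}$ elements that are easy to enumerate from the component partition. For each such coloring $C$ one then enumerates the $2^m$ states $s : \mathcal{C}(D_L) \to \{A^{(i)}, B^{(i)}\}$, where $m = |\mathcal{C}(D_L)|$; for each state the smoothing prescribed by Table~\ref{tab:smoothing} yields a planar $1$-manifold whose connected components give $k_s$, and the tensor values of $A^{(i)}$ and $B^{(i)}$ plug into $w^{n-p} \delta^{k_s} \prod_c s(c)(\cl_C(c))^{\sign(c)} \in \Zpoly$. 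Summing over $s$ yields $\beta^{(A^{(i)}, B^{(i)})}_{X_2}(D_L, C)$, and collecting $u^{\beta}$ over $C$ produces $\Phi^{(A^{(i)}, B^{(i)})}_{X_2}(L)$.

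Once all five polynomials have been tabulated for each $L \in \mathcal{L}$, the four bullets are verified by finite comparisons: bullets~(1) and~(2) are pointwise equalities across $\mathcal{L}$, while bullets~(3) and~(4) assert that the partition of $\mathcal{L}$ induced by $\Phi^{(A^{(i)}, B^{(i)})}_{X_2}$ for $i \in \{2,4\}$ refines that for $i \in \{1,3\}$, which in turn refines that for $i = 5$. Incidentally, inspection of the tensors shows $A^{(1)} = A^{(3)}$ and $A^{(2)} = A^{(4)}$, while $B^{(3)}$ (resp.\ $B^{(4)}$) differs from $B^{(1)}$ (resp.\ $B^{(2)}$) only at the four arguments $(a, b, c)$ with $b \neq c$, where the entries are related by exchanging $x_1$ and $x_5$. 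The pairing of such colored crossings provided by Lemma~\ref{lem:coloring_pair} suggests that bullets~(1) and~(2) might admit a structural proof valid for all links, but since the proposition concerns only $\mathcal{L}$ I would fall back on the direct computation if the cancellation does not work out cleanly.

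The main obstacle is not conceptual but computational: one must control a state sum of size up to $2^{n+1} \cdot 2^m$ per link in a five-variable Laurent polynomial ring over $\mathbb{Z}$, and test polynomial equalities robustly. Careful canonicalization of the polynomial representation (normalizing signs of exponents, sorting monomials lexicographically) is essential to avoid false inequalities in bullets~(1) and~(2), and the refinement statements in bullets~(3) and~(4) require computing a canonical form per polynomial and grouping links accordingly, then checking that every finer equivalence class is contained in a single coarser equivalence class.
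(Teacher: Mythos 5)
Your proposal is correct and takes essentially the same approach as the paper: this proposition appears in the ``Computer Experiments'' section with no structural proof, being justified exactly as you describe, by exact Laurent-polynomial computation of $\Phi^{(A^{(i)}, B^{(i)})}_{X_2}(L)$ for all $i \in \{1,\ldots,5\}$ and all $L \in \mathcal{L}$ (enumerating the $2^{n+1}$ colorings via Proposition~\ref{prop:coloring_number} and the $2^{m}$ states per coloring), followed by the finite pointwise and refinement comparisons. Your aside that the first two bullets might admit a structural proof via the $x_1 \leftrightarrow x_5$ relation between the $B$-tensors goes beyond the paper, which only states the corresponding equalities for all links as a conjecture; falling back on direct computation, as you do, is what the paper itself does.
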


For each $i, j \in \{1,2,3,4,5\}$ $(i \neq j)$, let $X_2^{(i, j)}$ denote the set of all unordered pairs $\{ L, L' \}$ of distinct links $L, L' \in \mathcal{L}$ such that $\Phi^{(A^{(i)}, B^{(i)})}_{X_2}(L) = \Phi^{(A^{(i)}, B^{(i)})}_{X_2}(L')$ and $\Phi^{(A^{(j)}, B^{(j)})}_{X_2}(L) \neq \Phi^{(A^{(j)}, B^{(j)})}_{X_2}(L')$.

\begin{proposition}
\label{prop: computer calculations of the pairs of Phi's}
The following holds:
    \begin{itemize}
        \item For $i \in \{1, 3\}$ and $j \in \{2, 4\}$, we have $|X_2^{(i, j)}| = 193$. 
        \item For $i \in \{1, 3\}$, we have $|X_2^{(i, 5)}| = 22$. 
        \item For $i \in \{2, 4\}$, we have $|X_2^{(i, 5)}| = 215$. 
    \end{itemize}
\end{proposition}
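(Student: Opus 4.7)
The plan is to verify this proposition by direct computer enumeration, since each $|X_2^{(i,j)}|$ is the output of a finite combinatorial computation over the fixed list $\mathcal{L}$ of 1268 multi-component links from LinkInfo. The main tasks are (i) computing $\Phi^{(A^{(i)}, B^{(i)})}_{X_2}(L)$ for each $L \in \mathcal{L}$ and each $i \in \{1,2,3,4,5\}$, and (ii) tallying the pairs $\{L, L'\}$ that satisfy the stated equality/inequality conditions.

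For step (i), the essential inputs are a LinkInfo PD-code for each $L$ together with the five tensors defining $(A^{(i)}, B^{(i)})$ and the elements $\delta = -x_1 x_5^{-1} - x_1^{-1} x_5$ and $w = -x_1^2 x_5^{-1}$. The naive count of summands in
\[
\beta_{X_2}^{(A^{(i)}, B^{(i)})}(D_L, C) = w^{n-p} \sum_{s \in \mathcal{S}(D_L)} \delta^{k_s} \prod_{c \in \mathcal{C}(D_L)} s(c)(\cl_C(c))^{\sign(c)}
\]
is $2^{|\mathcal{C}(D_L)|}$, so for diagrams with up to ten crossings each sum has at most $2^{10} = 1024$ terms; this is trivially tractable. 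To enumerate colorings efficiently I would invoke Proposition~\ref{prop:coloring_number}, which reduces $\operatorname{Col}_{X_2}(D_L)$ to the $2^{n+1}$ colorings $C(a, \mathcal{D}_1, \mathcal{D}_2)$, and Proposition~\ref{prop:comp_color}, which identifies each coloring with its complementary coloring on the level of $\beta$, halving the work. For each state $s$, the exponent $k_s$ is computed by smoothing according to Table~\ref{tab:smoothing} and counting connected components of the resulting planar curve, a standard graph-traversal routine once a combinatorial encoding (e.g.\ Gauss code or PD-code together with a smoothing pattern) is chosen.

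For step (ii), I would store each $\Phi^{(A^{(i)}, B^{(i)})}_{X_2}(L)$ as a dictionary mapping Laurent monomials in $\mathbb{Z}[x_1^{\pm1}, \ldots, x_5^{\pm1}]$ to integer coefficients (with $u$-exponents as keys), so that equality tests are exact and coordinate-independent. Then for each pair $(i,j)$ occurring in the statement I would iterate over all $\binom{1268}{2}$ unordered pairs $\{L, L'\}$, record which pairs have $\Phi^{(A^{(i)}, B^{(i)})}_{X_2}(L) = \Phi^{(A^{(i)}, B^{(i)})}_{X_2}(L')$ but $\Phi^{(A^{(j)}, B^{(j)})}_{X_2}(L) \neq \Phi^{(A^{(j)}, B^{(j)})}_{X_2}(L')$, and compare the total against $193$, $22$, or $215$ as appropriate. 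In view of Proposition~\ref{prop: computer calculations of Phi's}, it suffices to compute for representative indices $i \in \{1,2,5\}$, since the counts for $i = 3$ and $i = 4$ coincide with those for $i = 1$ and $i = 2$ respectively.

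The main obstacle is not mathematical but bookkeeping: carrying out the polynomial arithmetic over $\Zpoly$ in exact form, ensuring that the smoothing rules in Table~\ref{tab:smoothing} are implemented with the correct sign conventions, and verifying the PD-code orientations against LinkInfo, since a single sign error in $\sign(c)$ propagates through every $\beta$. To guard against this I would cross-check the outputs on the worked examples of Propositions~\ref{prop: torus_link} and \ref{prop: twist_knot}, confirm Corollary~\ref{cor:linking_num_multi} and Theorem~\ref{thm:jones_phi5} hold on $\mathcal{L}$ (so that every equality among $\Phi$-values implies the expected equalities among $\mathrm{LK}$ or $J$), and sanity-check the cardinalities $|X_2^{(i,j)}|$ by partially recomputing them with an independent implementation.
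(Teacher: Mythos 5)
Your proposal is correct and takes essentially the same approach as the paper: Proposition~\ref{prop: computer calculations of the pairs of Phi's} is established there purely by exhaustive computer enumeration over the 1268 LinkInfo links, computing each $\Phi^{(A^{(i)}, B^{(i)})}_{X_2}(L)$ exactly as a formal sum with exponents in $\Zpoly$ and then tallying the unordered pairs, which is precisely your plan. The paper records no further argument, so your additional details (the reduction via Propositions~\ref{prop:coloring_number} and~\ref{prop:comp_color}, the use of Proposition~\ref{prop: computer calculations of Phi's} to restrict to $i \in \{1,2,5\}$, and the cross-checks against Propositions~\ref{prop: torus_link} and~\ref{prop: twist_knot}) are a sound, if anything more careful, rendering of the same verification.
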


From Propositions~\ref{prop: computer calculations of Phi's} and~\ref{prop: computer calculations of the pairs of Phi's}, we can make the following conjecture:
\begin{conjecture}
    \begin{itemize}
        \item For each link $L$, we have $\Phi^{(A^{(1)}, B^{(1)})}_{X_2}(L) = \Phi^{(A^{(3)}, B^{(3)})}_{X_2}(L)$. 
        \item For each link $L$, we have $\Phi^{(A^{(2)}, B^{(2)})}_{X_2}(L) = \Phi^{(A^{(4)}, B^{(4)})}_{X_2}(L)$. 
        \item Let $i \in \{2, 4\}$ and $j \in \{1, 3\}$. Then, the invariant $\Phi^{(A^{(i)}, B^{(i)})}_{X_2}$ is strictly stronger than $\Phi^{(A^{(j)}, B^{(j)})}_{X_2}$. 
        \item Let $i \in \{1, 2, 3, 4\}$. Then the invariant $\Phi^{(A^{(i)}, B^{(i)})}_{X_2}$ is strictly stronger than $\Phi^{(A^{(5)}, B^{(5)})}_{X_2}$. 
    \end{itemize}
\end{conjecture}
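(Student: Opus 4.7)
The plan is to establish each of the four bullets via a combination of algebraic manipulation on the universal tribracket brackets and pairing arguments on the states.

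For the first two bullets, the starting point is that $A^{(1)} = A^{(3)}$ and $A^{(2)} = A^{(4)}$, while $B^{(1)}$ and $B^{(3)}$ (respectively $B^{(2)}$ and $B^{(4)}$) coincide on the four pure positions $(a,b,b)$ and differ on each of the four mixed positions by factors of the form $(x_1/x_5)^{\pm 2}$, with the exponents alternating in sign between the pair $(0,0,1),(1,1,0)$ and the pair $(0,1,0),(1,0,1)$. By Lemma~\ref{lem:coloring_pair} the mixed crossings come in pairs; a direct calculation at each pair shows that the four state-products corresponding to $(s(c_i),s(c_{i+1})) \in \{(A,A),(A,B),(B,A),(B,B)\}$ form the same multiset for types $1$ and $3$, the only change being that the $(A,B)$-contribution and the $(B,A)$-contribution are interchanged. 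Summing over states and invoking the pairing structure set up in the proof of Proposition~\ref{prop:comp_color}, one should conclude the desired equality of $\beta$-polynomials.

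The principal obstacle in this direction is the circle-count factor $\delta^{k_s}$: since the local smoothings at the two crossings of a mixed pair are a priori distinct, swapping the states $(A,B)$ and $(B,A)$ at a pair can alter $k_s$ globally. One would need to analyze the local smoothing geometry for each configuration in Figure~\ref{fig:ci_ci1} and verify that $k_{s_{(A,B)}} = k_{s_{(B,A)}}$, where $s_{(A,B)}$ and $s_{(B,A)}$ denote the two states differing only by this swap. Once this symmetry of circle counts is established, the weighted sum over states at each pair is invariant under $B^{(1)} \leftrightarrow B^{(3)}$ (respectively $B^{(2)} \leftrightarrow B^{(4)}$), and the equality of $\beta$-polynomials follows by summation.

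For the last two bullets, the ``strictly stronger'' separation is already witnessed by the pairs exhibited in Proposition~\ref{prop: computer calculations of the pairs of Phi's}. The substantive task is to prove the implication $\Phi^{(A^{(i)}, B^{(i)})}_{X_2}(L) = \Phi^{(A^{(i)}, B^{(i)})}_{X_2}(L') \Rightarrow \Phi^{(A^{(j)}, B^{(j)})}_{X_2}(L) = \Phi^{(A^{(j)}, B^{(j)})}_{X_2}(L')$ for the appropriate $(i,j)$. For the third bullet, $(A^{(2)}, B^{(2)})$ differs from $(A^{(1)}, B^{(1)})$ only at the checkerboard positions $(0,1,1),(1,0,0)$---which by Proposition~\ref{prop:coloring_number} arise precisely at crossings between two components both assigned to $\mathcal{D}_1$---with $A$-ratio $x_1^2 x_5^{-2}$ and $B$-ratio $x_1^4 x_5^{-4}$. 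I would attempt to express $\beta^{(A^{(1)}, B^{(1)})}(D_L, C)$ as an explicit function of $\beta^{(A^{(2)}, B^{(2)})}(D_L, C)$ together with combinatorial data read from the coloring $C$, and an analogous strategy would apply to the fourth bullet, noting that $(A^{(5)}, B^{(5)})$ may be viewed as a hybrid bracket combining the $B^{(1)}$-values at $(0,0,1),(1,1,0)$ with the $B^{(3)}$-values at $(0,1,0),(1,0,1)$. The hardest part of the plan will be these implication directions: because the discrepancies between universal brackets involve different monomials at different positions, there is no uniform ring endomorphism of $\Zpoly$ carrying one bracket to another, and any proof must exploit the specific combinatorial structure of $X_2$-colorings (in particular the component-subset decomposition of Proposition~\ref{prop:coloring_number}) to recover $\beta^{(A^{(j)}, B^{(j)})}$ from the full collection of values of $\beta^{(A^{(i)}, B^{(i)})}$ on the colorings of $D_L$.
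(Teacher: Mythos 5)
First, a structural point: the statement you set out to prove is not a theorem of the paper but a \emph{conjecture}. The paper's only support for it is computational --- Propositions~\ref{prop: computer calculations of Phi's} and \ref{prop: computer calculations of the pairs of Phi's} record that the four bullets hold for the 1268 links in $\mathcal{L}$ with at most ten crossings --- so there is no proof in the paper to compare yours against, and a complete argument would be a genuine advance rather than a reconstruction. Judged as a proof, however, your proposal has two gaps, and they are precisely the open content of the conjecture. For the first two bullets, your swap argument needs $k_{s_{(A,B)}} = k_{s_{(B,A)}}$ for states differing by exchanging the smoothing labels at a ``paired'' pair of mixed crossings. But Lemma~\ref{lem:coloring_pair} is only a counting identity ($r_0 = l_1$, $r_1 = l_0$); it produces no canonical geometric pairing of crossings, and for an arbitrary pairing the two crossings sit in unrelated parts of the diagram, so exchanging an $A$-smoothing with a $B$-smoothing at them changes the global connectivity and, in general, the circle count. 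This is why the paper's own Propositions~\ref{prop:comp_color} and \ref{prop:mirror} can exploit the same pairing: there the state $s$ is never altered --- only the coloring or the crossing data changes --- so the weight $\delta^{k_s}$ is identical on both sides of each comparison. Your argument, by contrast, must permute the states themselves, which is exactly what $\delta^{k_s}$ obstructs. Note also that per-state equality of the products is genuinely false --- a single state can assign $B$ unevenly to the two families of mixed crossings --- so the entire burden falls on this unproven circle-count symmetry.

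For the last two bullets, you correctly observe that the computational pairs of Proposition~\ref{prop: computer calculations of the pairs of Phi's} supply the strictness half, but the ``stronger'' half --- the implication $\Phi^{(A^{(i)}, B^{(i)})}_{X_2}(L) = \Phi^{(A^{(i)}, B^{(i)})}_{X_2}(L') \Rightarrow \Phi^{(A^{(j)}, B^{(j)})}_{X_2}(L) = \Phi^{(A^{(j)}, B^{(j)})}_{X_2}(L')$ for \emph{all} links, not just those in $\mathcal{L}$ --- is left as a stated intention rather than an argument. You yourself note the essential difficulty: no ring homomorphism of $\Zpoly$ carries one universal bracket to another (contrast Theorem~\ref{thm:jones_phi5}, where the entire proof \emph{is} such a specialization), and no substitute mechanism is supplied for recovering $\beta^{(A^{(j)}, B^{(j)})}$ from the values of $\beta^{(A^{(i)}, B^{(i)})}$. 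So as it stands the proposal proves none of the four bullets; its real value is that it isolates, accurately, the two technical obstructions that anyone attacking this conjecture must overcome.
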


Next, we obtained the following results for quantum enhancement polynomials and Jones polynomials:
\begin{proposition}\label{prop:Jones_exp}
Let $i \in \{1,2,3,4,5\}$ and $L, L' \in \mathcal{L}$. If $\Phi^{(A^{(i)}, B^{(i)})}_{X_2}(L) = \Phi^{(A^{(i)}, B^{(i)})}_{X_2}(L')$, then we have $J(L) = J(L')$. 
\end{proposition}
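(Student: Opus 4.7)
The plan is to reduce the statement to the case $i = 5$, which is already handled by Theorem~\ref{thm:jones_phi5}, and then use the computer-verified implications from Proposition~\ref{prop: computer calculations of Phi's} to cover the remaining indices $i \in \{1,2,3,4\}$.

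First, I would dispose of the case $i = 5$ immediately by invoking Theorem~\ref{thm:jones_phi5}: if $\Phi^{(A^{(5)}, B^{(5)})}_{X_2}(L) = \Phi^{(A^{(5)}, B^{(5)})}_{X_2}(L')$, then already $J(L) = J(L')$, with no restriction on $L, L'$ at all. So this case is free.

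Next, for $i \in \{1,2,3,4\}$, I would use the fourth bullet of Proposition~\ref{prop: computer calculations of Phi's}, which asserts (as a computational fact verified on $\mathcal{L}$) that for any such $i$ and any $L, L' \in \mathcal{L}$, the equality $\Phi^{(A^{(i)}, B^{(i)})}_{X_2}(L) = \Phi^{(A^{(i)}, B^{(i)})}_{X_2}(L')$ implies $\Phi^{(A^{(5)}, B^{(5)})}_{X_2}(L) = \Phi^{(A^{(5)}, B^{(5)})}_{X_2}(L')$. Composing with the $i=5$ case (that is, applying Theorem~\ref{thm:jones_phi5} to the resulting equality of $\Phi^{(A^{(5)}, B^{(5)})}_{X_2}$), we obtain $J(L) = J(L')$, as desired.

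The entire proof is therefore just a two-step chaining: a tabulated implication $\Phi^{(A^{(i)}, B^{(i)})}_{X_2} \Rightarrow \Phi^{(A^{(5)}, B^{(5)})}_{X_2}$ on $\mathcal{L}$ followed by the general implication $\Phi^{(A^{(5)}, B^{(5)})}_{X_2} \Rightarrow J$ of Theorem~\ref{thm:jones_phi5}. No genuine mathematical obstacle arises, since both ingredients are already in place. The only potential worry is that Proposition~\ref{prop: computer calculations of Phi's} is a statement about the specific finite list $\mathcal{L}$ rather than a general theorem; but since Proposition~\ref{prop:Jones_exp} itself is restricted to $\mathcal{L}$, this restriction is harmless. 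I would therefore present the proof as essentially one line of logical composition, making clear that the nontrivial content is contained in Theorem~\ref{thm:jones_phi5} together with the enumeration underlying Proposition~\ref{prop: computer calculations of Phi's}.
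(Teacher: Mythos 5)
Your proof is logically sound, but it is worth noting that it is not the argument the paper itself gives: in the paper, Proposition~\ref{prop:Jones_exp} is presented as an independent \emph{computational} finding (the Jones polynomials and the five enhancement polynomials were computed for all 1268 links in $\mathcal{L}$ and compared directly), and the only theoretical remark made is that the case $i=5$ follows from Theorem~\ref{thm:jones_phi5}. Your route instead \emph{derives} the cases $i\in\{1,2,3,4\}$ by chaining the fourth bullet of Proposition~\ref{prop: computer calculations of Phi's} (the tabulated implication $\Phi^{(A^{(i)},B^{(i)})}_{X_2}(L)=\Phi^{(A^{(i)},B^{(i)})}_{X_2}(L') \Rightarrow \Phi^{(A^{(5)},B^{(5)})}_{X_2}(L)=\Phi^{(A^{(5)},B^{(5)})}_{X_2}(L')$ on $\mathcal{L}$) with Theorem~\ref{thm:jones_phi5}, which holds for arbitrary links and in particular for members of $\mathcal{L}$. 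This is a correct reduction, and you are right that the restriction of the tabulated implication to $\mathcal{L}$ is harmless since the proposition is itself restricted to $\mathcal{L}$. What your approach buys is a cleaner logical organization: the proposition ceases to be a separate empirical observation and becomes a formal consequence of one already-recorded computation plus one theorem, so no comparison of Jones polynomials is needed at all to establish it; moreover, it makes transparent that if the conjectured extension of the fourth bullet to all links were proved, Proposition~\ref{prop:Jones_exp} would immediately hold for all links. What the paper's direct verification buys, by contrast, is an independent consistency check of the computations (and the Jones polynomial data are needed anyway for Proposition~\ref{prop:computer experiments for Phi and (LK, J)}, where the pairs with equal $(J,\mathrm{LK})$ but different $\Phi$ are counted).
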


When $i=5$, this proposition follows from Theorem~\ref{thm:jones_phi5}. The following conjecture can be made based on these computations:
\begin{conjecture}
Let $i \in \{1,2,3,4\}$ and let $L, L'$ be links. If $\Phi^{(A^{(i)}, B^{(i)})}_{X_2}(L) = \Phi^{(A^{(i)}, B^{(i)})}_{X_2}(L')$, then $J(L) = J(L')$. 
\end{conjecture}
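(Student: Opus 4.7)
The plan parallels the proof of Theorem~\ref{thm:jones_phi5}. There, the ring homomorphism $f$ with $f(x_j) = x$ for $j \leq 4$ and $f(x_5) = x^{-1}$ reduces $(A^{(5)}, B^{(5)})$ to the constant pair $(x, x^{-1})$, so that for every $X_2$-coloring $C$ one has $f(\beta^{(A^{(5)}, B^{(5)})}_{X_2}(D_L, C)) = K(L)$, which in turn recovers $J(L)$. One first checks that the direct analogue of this reduction fails for $i \in \{1, 2, 3, 4\}$: for $i = 1$, requiring $f(A^{(1)}_{a,b,c})$ and $f(B^{(1)}_{a,b,c})$ to be simultaneously constant forces $f(x_5) = x^{-1}$, $f(x_1) = f(x_3) = x$ and $f(x_1 x_3 x_5^{-1}) = f(B^{(1)}_{0,1,0}) = x^{-1}$, giving $x^3 = x^{-1}$; analogous obstructions appear for $i = 2, 3, 4$. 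Thus any proof must be more indirect.

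The natural strategy is to isolate the trivial/checkerboard subsum $\Phi^{(A^{(i)}, B^{(i)})}_{\text{TC}}(L)$ inside $\Phi^{(A^{(i)}, B^{(i)})}_{X_2}(L)$, since Theorem~\ref{thm:jones_tc} then delivers $J(L) = J(L')$ immediately once TC-subsums agree. Two structural tools available here. First, by Lemma~\ref{lem:x1x5} the TC-contributions have exponents in $\mathbb{Z}[(x_1 x_5^{-1})^{\pm 1}]$, so in particular no monomial in $x_2, x_3, x_4$ appears. Second, by Proposition~\ref{prop:linking_num}, any coloring $C(a, \mathcal{D}_1, \mathcal{D}_2)$ with $\mathrm{lk}(L_1, L_2) \neq 0$ yields a $\beta$-exponent of nonzero $\deg_{x_2}$ (and $\deg_{x_3}$), so in a diagram where components genuinely link, non-trivial colorings leave a detectable trace in $x_2, x_3$. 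Combined with Lemma~\ref{lem:non-zero}, one would like to define a projection of $\Phi^{(A^{(i)}, B^{(i)})}_{X_2}(L)$ onto the subring of exponents lying in $\mathbb{Z}[x_1^{\pm 1}, x_5^{\pm 1}]$ and show this projection coincides with $\Phi^{(A^{(i)}, B^{(i)})}_{\text{TC}}(L)$.

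The hard part will be non-trivial decomposition colorings all of whose sublink pairs have linking number zero: split links, the Whitehead link, Thistlethwaite-type configurations, or any link of pairwise unlinked components. For such colorings Proposition~\ref{prop:linking_num} does not obstruct the $\beta$-exponent from also lying in $\mathbb{Z}[x_1^{\pm 1}, x_5^{\pm 1}]$, and for a split diagram this can genuinely occur. To handle this, one would need either a finer monomial signature extracted from the explicit values of $A^{(i)}, B^{(i)}$ at the mixed triples $(0,0,1), (0,1,0), (1,0,1), (1,1,0)$---for instance using the identities of Lemma~\ref{lem:tri_eq} and the mixed-crossing analysis of Section~\ref{sec: Basic properties of quantum enhancement polynomials} to show such exponents always contain a ``forbidden'' combination of $x_1, x_5$ powers not realized by the trivial/checkerboard colorings---or an induction on the number of components that reduces the zero-linking cases via the split-union behavior $J(L_1 \sqcup L_2) = (-t^{1/2} - t^{-1/2})\,J(L_1)\,J(L_2)$ of the Jones polynomial, together with the knot case established by Corollary~\ref{cor: universal tribracket brackets invariants are euivalent to the Jones polynomial for knots}. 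A fallback is to search, for each $i$, for a ring homomorphism $f_i$ into an auxiliary ring for which $(f_i \circ A^{(i)}, f_i \circ B^{(i)})$ is not constant but whose smoothing sums still collapse Kauffman-style; candidates are constrained by Lemma~\ref{lem:tri_eq} and by Proposition~\ref{prop:mirror}.
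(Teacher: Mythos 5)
The statement you have attempted is not proved anywhere in the paper: it appears only as a conjecture in Section~\ref{sec:Computer Experiments}, supported by the computer verification of Proposition~\ref{prop:Jones_exp} for the 1268 links with at most ten crossings (the analogous statement for $i=5$ is Theorem~\ref{thm:jones_phi5}, but that proof does not extend). So there is no proof in the paper to compare yours against; a correct argument here would be new mathematics, not a reconstruction.

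Your proposal does not close the conjecture either, and the gap is the one you yourself flag. The sound parts are (i) the observation that the substitution trick of Theorem~\ref{thm:jones_phi5} fails for $i\in\{1,2,3,4\}$ (for $i=1$, forcing $f(A^{(1)})$ to be constant gives $f(x_1)=f(x_3)=x$, $f(x_5)=x^{-1}$, and then $f(B^{(1)}_{0,1,0})=f(x_1x_3x_5^{-1})=x^{3}\neq x^{-1}$), and (ii) the reduction via Theorem~\ref{thm:jones_tc}: it would suffice to show that equality of the full polynomials forces equality of the TC-subsums. The unresolved step is precisely the zero-linking case, and it is not a technicality: for a split link $L=K_1\sqcup K_2$ with diagram $D_1\sqcup D_2$, the coloring $C(a,\{D_1\},\{D_2\})$ (checkerboard across $D_1$, constant across $D_2$) assigns to every crossing a triple of the form $(x,x{+}1,x{+}1)$ or $(x,x,x)$, so every factor $s(c)(\operatorname{cl}_C(c))^{\pm1}$ lies in $\mathbb{Z}[x_1^{\pm1},x_5^{\pm1}]$, and by Lemma~\ref{lem:sum_zero} the whole exponent lies in $\mathbb{Z}[(x_1x_5^{-1})^{\pm1}]$ --- exactly the ring that Lemma~\ref{lem:x1x5} attributes to TC-colorings --- even though this coloring is neither trivial nor checkerboard. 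Hence the projection onto exponents supported in $\mathbb{Z}[x_1^{\pm1},x_5^{\pm1}]$ does \emph{not} recover $\Phi^{(A^{(i)},B^{(i)})}_{\mathrm{TC}}$, and Proposition~\ref{prop:linking_num} cannot repair this, since it only controls the net $x_2$- and $x_3$-degrees, which vanish here. None of your three suggested remedies (a finer monomial signature, induction on components via $J(L_1\sqcup L_2)=(-t^{1/2}-t^{-1/2})J(L_1)J(L_2)$, or an auxiliary collapsing homomorphism) is carried out, and each faces the same obstruction that mixed zero-linking colorings are invisible to all the invariance tools (Lemmas~\ref{lem:tri_eq}, \ref{lem:non-zero}, \ref{lem:x1x5}) the paper provides. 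Your text should therefore be presented as a possible line of attack on an open conjecture, not as a proof.
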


Recall that, for a link $L$, $\text{LK}(L)$ denotes the multiset $\{\text{lk}(L_1, L_2) \mid L = L_1 \cup L_2\}$. From Corollary~\ref{cor:linking_num_multi} and Proposition~\ref{prop:Jones_exp}, we observe that for any $i \in \{1,2,3,4,5\}$ and any links $L, L' \in \mathcal{L}$, if $\Phi^{(A^{(i)}, B^{(i)})}_{X_2}(L) = \Phi^{(A^{(i)}, B^{(i)})}_{X_2}(L')$, then $(J(L), \text{LK}(L)) = (J(L'), \text{LK}(L'))$. However, computer calculations indicate that the converse does not hold.

For $i \in \{1,2,3,4,5\}$, let $X_2^{(i)}$ denote the set of all unordered pairs $\{ L, L' \}$ of distinct links $L, L' \in \mathcal{L}$ such that $\Phi^{(A^{(i)}, B^{(i)})}_{X_2}(L) \neq \Phi^{(A^{(i)}, B^{(i)})}_{X_2}(L')$ and $(J(L), \text{LK}(L)) = (J(L'), \text{LK}(L'))$.

\begin{proposition}
\label{prop:computer experiments for Phi and (LK, J)}
The following holds:
\begin{enumerate}
    \item The set $X_2^{(1)}$ coincides with $X_2^{(3)}$, and $|X_2^{(1)}| = |X_2^{(3)}| = 22$. 
    \item The set $X_2^{(2)}$ coincides with $X_2^{(4)}$, and $|X_2^{(2)}| = |X_2^{(4)}| = 215$. 
    \item The set $X_2^{(5)}$ is empty. 
\end{enumerate}
\end{proposition}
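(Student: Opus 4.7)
The plan is to reduce the assertion to a finite computer verification, using the structural results of Proposition~\ref{prop: computer calculations of Phi's} to handle the set-level equalities formally, and then tabulating invariants over $\mathcal{L}$ to obtain the counts.

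First I would dispose of the coincidences $X_2^{(1)} = X_2^{(3)}$ in~(1) and $X_2^{(2)} = X_2^{(4)}$ in~(2). These are immediate from the first two bullets of Proposition~\ref{prop: computer calculations of Phi's}: for every $L \in \mathcal{L}$ we have $\Phi^{(A^{(1)}, B^{(1)})}_{X_2}(L) = \Phi^{(A^{(3)}, B^{(3)})}_{X_2}(L)$ and $\Phi^{(A^{(2)}, B^{(2)})}_{X_2}(L) = \Phi^{(A^{(4)}, B^{(4)})}_{X_2}(L)$, while the condition $(J(L), \text{LK}(L)) = (J(L'), \text{LK}(L'))$ appearing in the definition of $X_2^{(i)}$ does not depend on $i$. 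Hence the membership conditions defining $X_2^{(1)}$ and $X_2^{(3)}$ (respectively $X_2^{(2)}$ and $X_2^{(4)}$) coincide on $\mathcal{L}$.

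What remains, namely the numerical values $|X_2^{(1)}| = 22$ and $|X_2^{(2)}| = 215$ and the emptiness of $X_2^{(5)}$, is to be established computationally. For each $L \in \mathcal{L}$ I would take a planar diagram $D_L$ from LinkInfo and perform the following steps: enumerate the $2^{n+1}$ region colorings $C : \mathcal{R}(D_L) \to X_2$, which by Proposition~\ref{prop:coloring_number} are parameterised by a decomposition $(\mathcal{D}_1, \mathcal{D}_2)$ of the components of $D_L$ together with an element $a \in X_2$; for each such $C$ and each $i \in \{1, \ldots, 5\}$, compute $\beta^{(A^{(i)}, B^{(i)})}_{X_2}(D_L, C) \in \Zpoly$ from Definition~\ref{def:beta} by iterating over the $2^{|\mathcal{C}(D_L)|}$ states; and finally sum the corresponding $u$-exponentials to obtain $\Phi^{(A^{(i)}, B^{(i)})}_{X_2}(L)$. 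In parallel I would record $J(L)$ (available directly from LinkInfo) and the multiset $\text{LK}(L)$ read off from the signed crossing data of $D_L$. The complementary-coloring identity of Proposition~\ref{prop:comp_color} can be used to halve the coloring enumeration, which is helpful since $|\mathcal{C}(D_L)| \le 10$ means the state sum is already the dominant cost.

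With these data tabulated, the pairs in each $X_2^{(i)}$ are obtained by bucketing the $1268$ links by the value of the pair $(J(L), \text{LK}(L))$ and then, within each bucket, listing the unordered pairs $\{L, L'\}$ for which $\Phi^{(A^{(i)}, B^{(i)})}_{X_2}(L) \neq \Phi^{(A^{(i)}, B^{(i)})}_{X_2}(L')$; counting yields the cardinalities claimed in~(1) and~(2), and an empty count for $i=5$ yields~(3). The main obstacle is computational hygiene rather than mathematical subtlety: the polynomials $\beta^{(A^{(i)}, B^{(i)})}_{X_2}(D_L, C)$ are Laurent polynomials in five variables and grow substantially during the state sum, so the arithmetic must be carried out exactly (for instance by representing each such polynomial as a dictionary from exponent vectors in $\mathbb{Z}^5$ to integer coefficients) and the final equality tests must be performed on fully reduced normal forms in order for the count of disagreeing pairs to be trustworthy.
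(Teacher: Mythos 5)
Your proposal is correct and matches the paper's approach: the proposition is a computational finding, established (as you describe) by computing $\Phi^{(A^{(i)}, B^{(i)})}_{X_2}$, $J$, and $\mathrm{LK}$ for all $1268$ links in the LinkInfo census and counting the disagreeing pairs within each $(J,\mathrm{LK})$-bucket, with no theoretical argument available for the counts or for the emptiness of $X_2^{(5)}$. Your additional observation that the set-level coincidences $X_2^{(1)}=X_2^{(3)}$ and $X_2^{(2)}=X_2^{(4)}$ follow formally from Proposition~\ref{prop: computer calculations of Phi's} (since the $(J,\mathrm{LK})$-condition is independent of $i$) is a clean way to organize what the paper leaves implicit.
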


Based on this proposition, we can make the following conjecture:
\begin{conjecture}
The two link invariants $\Phi^{(A^{(i)}, B^{(i)})}_{X_2}$ and $(J, \text{LK})$ satisfy the following:
    \begin{itemize}
        \item For $i \in \{1,2,3,4\}$, the invariant $\Phi^{(A^{(i)}, B^{(i)})}_{X_2}$ is strictly stronger than $(J, \text{LK})$. 
        \item The invariant $\Phi^{(A^{(5)}, B^{(5)})}_{X_2}$ is equivalent to $(J, \text{LK})$. 
    \end{itemize}
\end{conjecture}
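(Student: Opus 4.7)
The statement is a purely computational claim, so my plan is to verify it directly by evaluating the three invariants $\Phi^{(A^{(i)},B^{(i)})}_{X_2}$, $J$, and $\mathrm{LK}$ on each of the 1268 multi-component links in $\mathcal{L}$ and then tabulating. The point of the proposal is therefore to describe the algorithm clearly and identify where the cost lives.

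First I would read off from LinkInfo a planar diagram presentation of each $L \in \mathcal{L}$ together with the Jones polynomial $J(L)$. From the planar diagram code I can compute the multiset $\mathrm{LK}(L)$ directly by enumerating decompositions $L = L_1 \cup L_2$ and, for each one, summing $\operatorname{sign}(c)$ over the mixed crossings at which $L_1$ is the over-strand. For $\Phi^{(A^{(i)},B^{(i)})}_{X_2}(L)$ I would implement Definition~\ref{def:phi} as a state sum: iterate over $X_2$-colorings, which by Proposition~\ref{prop:coloring_number} are parametrized by triples $(a,\mathcal{D}_1,\mathcal{D}_2)$ and hence number $2^{n+1}$ where $n$ is the number of components; for each coloring, iterate over the $2^{|\mathcal{C}(D_L)|}$ states $s$; for each state, form the product $\prod_c s(c)(\mathrm{cl}_C(c))^{\mathrm{sign}(c)}$ as a Laurent monomial in $\mathbb{Z}[x_1^{\pm 1},\ldots,x_5^{\pm 1}]$, multiply by $\delta^{k_s}$, and accumulate. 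Proposition~\ref{prop:comp_color} halves the number of distinct $\beta$-values produced (each $C$ and its complement $\overline{C}$ give the same $\beta$), providing a useful self-check; at the end one obtains the desired formal polynomial in $u$.

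With these data in hand the verification reduces to combinatorial bookkeeping. For each $i \in \{1,\ldots,5\}$, I would group $\mathcal{L}$ by the common value of $(J,\mathrm{LK})$ and, inside each group $G$, partition it further by the value of $\Phi^{(A^{(i)},B^{(i)})}_{X_2}$ into blocks $\{G_j\}$; then $|X_2^{(i)}| = \sum_G \bigl( \binom{|G|}{2} - \sum_j \binom{|G_j|}{2} \bigr)$. Running this on the computed table should yield the asserted cardinalities $22, 215, 215, 22, 0$, and the equalities $X_2^{(1)}=X_2^{(3)}$ and $X_2^{(2)}=X_2^{(4)}$ then follow from the first two bullets of Proposition~\ref{prop: computer calculations of Phi's} with no additional work.

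The main obstacle is efficiency rather than mathematical content. For a 10-crossing three-component link the state sum has up to $2^{|\mathcal{C}(D_L)|} \cdot 2^{n+1}$ terms, each a five-variable Laurent monomial, and naive polynomial arithmetic can be slow; careful handling of the $w^{n-p}$ prefactor and the $\delta^{k_s}$ contributions together with aggressive intermediate simplification is needed. A secondary subtlety is ensuring the planar diagrams we use actually represent the LinkInfo links (rather than their mirrors or reverses), so that the tabulated $J$ and $\mathrm{LK}$ agree with the $\Phi$ computed from the same diagram; this can be cross-checked by recomputing $J$ from the same planar diagram code via the trivial tribracket specialization described in the example following Definition~\ref{def:phi}, and by recomputing $\mathrm{LK}$ from several diagrams of the same link.
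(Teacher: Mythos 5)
Your proposal misidentifies what kind of statement this is. It is a \emph{conjecture} about arbitrary links, and the paper does not prove it at all; the paper states it as an extrapolation from the finite computations recorded in Proposition~\ref{prop:Jones_exp} and Proposition~\ref{prop:computer experiments for Phi and (LK, J)}. Your plan is to compute $\Phi^{(A^{(i)},B^{(i)})}_{X_2}$, $J$, and $\mathrm{LK}$ on the 1268 links of $\mathcal{L}$ and tabulate. Executed perfectly, that computation establishes only statements quantified over $\mathcal{L}$ --- i.e.\ it reproduces exactly the two propositions just cited (the cardinalities $22$, $215$, $0$ you mention are the content of Proposition~\ref{prop:computer experiments for Phi and (LK, J)}, not of the conjecture). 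It cannot establish the conjecture itself, because both bullets quantify over all links: ``strictly stronger'' for $i \in \{1,2,3,4\}$ requires that for \emph{every} pair of links, equality of $\Phi^{(A^{(i)},B^{(i)})}_{X_2}$ forces equality of $(J,\mathrm{LK})$ --- the $\mathrm{LK}$ half is known for all links (Corollary~\ref{cor:linking_num_multi}), but the $J$ half for $i \in \{1,2,3,4\}$ is itself only the preceding conjecture in the paper; and ``equivalent'' for $i=5$ requires the converse implication, that $(J(L),\mathrm{LK}(L)) = (J(L'),\mathrm{LK}(L'))$ forces $\Phi^{(A^{(5)},B^{(5)})}_{X_2}(L) = \Phi^{(A^{(5)},B^{(5)})}_{X_2}(L')$, again for all links. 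Verifying that $X_2^{(5)}$ is empty within $\mathcal{L}$ says nothing about a pair of $11$-crossing links.

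The only fragments of the statement your computation would genuinely prove are the existence claims hidden in the word ``strictly'': exhibiting one pair, such as $L8a20\{0;0\}$ and $L8a20\{1;0\}$, with equal $(J,\mathrm{LK})$ but distinct $\Phi^{(A^{(i)},B^{(i)})}_{X_2}$ for $i \in \{1,2,3,4\}$. Everything else in the statement remains open after your computation finishes, which is precisely why the authors labeled it a conjecture. A correct response to this statement is either to say that no proof is currently available and present the finite computation as evidence (as the paper does), or to supply a genuinely new argument --- for instance, a structural proof that $(J,\mathrm{LK})$ determines $\Phi^{(A^{(5)},B^{(5)})}_{X_2}$, perhaps via an analysis of how $\beta^{(A^{(5)},B^{(5)})}_{X_2}$ on the non-trivial, non-checkerboard colorings is recoverable from Kauffman-bracket data of sublinks together with linking numbers. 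Your proposal contains no such idea, and no amount of care about state-sum efficiency or mirror conventions repairs the gap between a finite check and a universally quantified claim.
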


\begin{example}
The unordered pair $\{L8a20\{0;0\}, L8a20\{1;0\}\}$ of links $L8a20\{0;0\}$ and $L8a20\{1;0\}$ shown in 
Figure~\ref{fig:ex_X_element} belongs to $X_2^{(i)}$ for any $i \in \{1,2,3,4\}$. 
    \begin{figure}[htbp]
        \centering
        \includegraphics{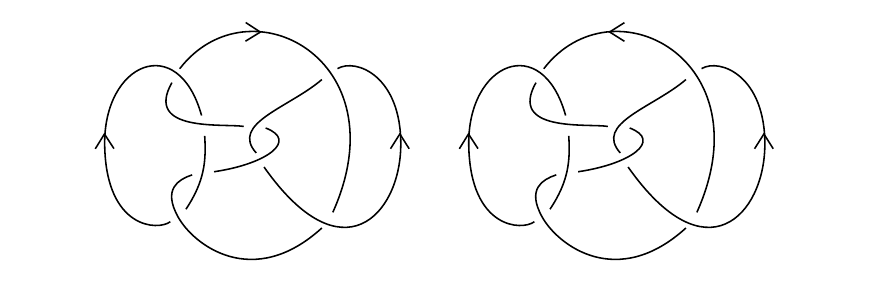}
        \caption{The links $L8a20\{0;0\}$ and $L8a20\{1;0\}$.}
        \label{fig:ex_X_element}
    \end{figure}
   In fact, the invariants $\Phi^{(A^{(i)}, B^{(i)})}_{X_2}$, $J$ and $LK$  for those links are as follows: 
    \begin{itemize}
        \item $J( L8a20\{0;0\} ) = J( L8a20\{1;0\} ) = - t^{-\frac{9}{2}} + t^{-\frac{7}{2}} - 3t^{-\frac{5}{2}} - t^{-\frac{1}{2}} - t^{\frac{1}{2}} -3 t^{\frac{5}{2}} + t^{\frac{7}{2}} - t^{\frac{9}{2}}$
        \item $\text{LK}( L8a20\{0;0\} ) = \text{LK}( L8a20\{1;0\} ) = \{-2, -2, -2, -2, 0, 0, 0, 0, 0, 0, 0, 0, 0, 0, 0, 0, 2, 2, 2, 2\}.$
        \item $\Phi^{(A^{(1)}, B^{(1)})}_{X_2}( L8a20\{0;0\} ) = \Phi^{(A^{(3)}, B^{(3)})}_{X_2}( L8a20\{0;0\} ) = 4u^a + 4u^b + 4u^c + 4u^d$.
        \item $\Phi^{(A^{(2)}, B^{(2)})}_{X_2}( L8a20\{0;0\} ) = \Phi^{(A^{(4)}, B^{(4)})}_{X_2}( L8a20\{0;0\} ) = 4u^a + 4u^b + 2u^c + 2u^d + 2u^e + 2u^f$.
        \item $\Phi^{(A^{(5)}, B^{(5)})}_{X_2}( L8a20\{0;0\} ) = \Phi^{(A^{(5)}, B^{(5)})}_{X_2}( L8a20\{1;0\} ) = 8u^a + 4u^g + 4u^h$.
        \item $\Phi^{(A^{(1)}, B^{(1)})}_{X_2}( L8a20\{1;0\} ) = \Phi^{(A^{(3)}, B^{(3)})}_{X_2}( L8a20\{1;0\} ) = 4u^a + 4u^b + 4u^i + 4u^j$.
        \item $\Phi^{(A^{(2)}, B^{(2)})}_{X_2}( L8a20\{1;0\} ) = \Phi^{(A^{(4)}, B^{(4)})}_{X_2}( L8a20\{1;0\} ) = 4u^a + 4u^b + 2u^i + 2u^j + 2u^k + 2u^l$.
    \end{itemize}
    Here, $a,b,c,d,e,f,g, h, i, j, k, l \in \Zpoly$ are defined as follows: 
    \begin{itemize}
        \item $a = -x_{1}^{-9}x_{5}^{9}+x_{1}^{-7}x_{5}^{7}-3x_{1}^{-5}x_{5}^{5}-x_{1}^{-1}x_{5}-x_{1}x_{5}^{-1}-3x_{1}^{5}x_{5}^{-5}+x_{1}^{7}x_{5}^{-7}-x_{1}^{9}x_{5}^{-9}$
        \item $b = -x_{1}^{-3}x_{5}^{3}-3x_{1}^{-1}x_{5}-3x_{1}x_{5}^{-1}-x_{1}^{3}x_{5}^{-3}$
        \item $c = -x_{1}^{-3}x_{2}^{-2}x_{3}^{-2}x_{5}^{7}-2x_{1}^{-1}x_{2}^{-2}x_{3}^{-2}x_{5}^{5}-2x_{1}x_{2}^{-2}x_{3}^{-2}x_{5}^{3}-x_{1}^{3}x_{2}^{-2}x_{3}^{-2}x_{5}-x_{1}^{7}x_{2}^{-2}x_{3}^{-2}x_{5}^{-3}-x_{1}^{9}x_{2}^{-2}x_{3}^{-2}x_{5}^{-5}$
        \item $d = -x_{1}^{-9}x_{2}^{2}x_{3}^{2}x_{5}^{5}-x_{1}^{-7}x_{2}^{2}x_{3}^{2}x_{5}^{3}-x_{1}^{-3}x_{2}^{2}x_{3}^{2}x_{5}^{-1}-2x_{1}^{-1}x_{2}^{2}x_{3}^{2}x_{5}^{-3}-2x_{1}x_{2}^{2}x_{3}^{2}x_{5}^{-5}-x_{1}^{3}x_{2}^{2}x_{3}^{2}x_{5}^{-7}$
        \item $e = -x_{1}^{7}x_{2}^{-2}x_{3}^{-2}x_{5}^{-3}-x_{1}^{9}x_{2}^{-2}x_{3}^{-2}x_{5}^{-5}-x_{1}^{13}x_{2}^{-2}x_{3}^{-2}x_{5}^{-9}-2x_{1}^{15}x_{2}^{-2}x_{3}^{-2}x_{5}^{-11}-2x_{1}^{17}x_{2}^{-2}x_{3}^{-2}x_{5}^{-13}-x_{1}^{19}x_{2}^{-2}x_{3}^{-2}x_{5}^{-15}$
        \item $f = -x_{1}^{-19}x_{2}^{2}x_{3}^{2}x_{5}^{15}-2x_{1}^{-17}x_{2}^{2}x_{3}^{2}x_{5}^{13}-2x_{1}^{-15}x_{2}^{2}x_{3}^{2}x_{5}^{11}-x_{1}^{-13}x_{2}^{2}x_{3}^{2}x_{5}^{9}-x_{1}^{-9}x_{2}^{2}x_{3}^{2}x_{5}^{5}-x_{1}^{-7}x_{2}^{2}x_{3}^{2}x_{5}^{3}$
        \item $g = -x_{1}^{-5}x_{2}^{-2}x_{3}^{-2}x_{5}^{9}+x_{1}^{-3}x_{2}^{-2}x_{3}^{-2}x_{5}^{7}-3x_{1}^{-1}x_{2}^{-2}x_{3}^{-2}x_{5}^{5}-x_{1}^{3}x_{2}^{-2}x_{3}^{-2}x_{5}-x_{1}^{5}x_{2}^{-2}x_{3}^{-2}x_{5}^{-1}-3x_{1}^{9}x_{2}^{-2}x_{3}^{-2}x_{5}^{-5}+x_{1}^{11}x_{2}^{-2}x_{3}^{-2}x_{5}^{-7}-x_{1}^{13}x_{2}^{-2}x_{3}^{-2}x_{5}^{-9}$
        \item $h = -x_{1}^{-13}x_{2}^{2}x_{3}^{2}x_{5}^{9}+x_{1}^{-11}x_{2}^{2}x_{3}^{2}x_{5}^{7}-3x_{1}^{-9}x_{2}^{2}x_{3}^{2}x_{5}^{5}-x_{1}^{-5}x_{2}^{2}x_{3}^{2}x_{5}-x_{1}^{-3}x_{2}^{2}x_{3}^{2}x_{5}^{-1}-3x_{1}x_{2}^{2}x_{3}^{2}x_{5}^{-5}+x_{1}^{3}x_{2}^{2}x_{3}^{2}x_{5}^{-7}-x_{1}^{5}x_{2}^{2}x_{3}^{2}x_{5}^{-9}$
        \item $i = -x_{1}^{-5}x_{2}^{-2}x_{3}^{-2}x_{5}^{9}-x_{1}^{-3}x_{2}^{-2}x_{3}^{-2}x_{5}^{7}-x_{1}x_{2}^{-2}x_{3}^{-2}x_{5}^{3}-2x_{1}^{3}x_{2}^{-2}x_{3}^{-2}x_{5}-2x_{1}^{5}x_{2}^{-2}x_{3}^{-2}x_{5}^{-1}-x_{1}^{7}x_{2}^{-2}x_{3}^{-2}x_{5}^{-3}$
        \item $j = -x_{1}^{-7}x_{2}^{2}x_{3}^{2}x_{5}^{3}-2x_{1}^{-5}x_{2}^{2}x_{3}^{2}x_{5}-2x_{1}^{-3}x_{2}^{2}x_{3}^{2}x_{5}^{-1}-x_{1}^{-1}x_{2}^{2}x_{3}^{2}x_{5}^{-3}-x_{1}^{3}x_{2}^{2}x_{3}^{2}x_{5}^{-7}-x_{1}^{5}x_{2}^{2}x_{3}^{2}x_{5}^{-9}$
        \item $k = -x_{1}^{9}x_{2}^{-2}x_{3}^{-2}x_{5}^{-5}-2x_{1}^{11}x_{2}^{-2}x_{3}^{-2}x_{5}^{-7}-2x_{1}^{13}x_{2}^{-2}x_{3}^{-2}x_{5}^{-9}-x_{1}^{15}x_{2}^{-2}x_{3}^{-2}x_{5}^{-11}-x_{1}^{19}x_{2}^{-2}x_{3}^{-2}x_{5}^{-15}-x_{1}^{21}x_{2}^{-2}x_{3}^{-2}x_{5}^{-17}$
        \item $l = -x_{1}^{-21}x_{2}^{2}x_{3}^{2}x_{5}^{17}-x_{1}^{-19}x_{2}^{2}x_{3}^{2}x_{5}^{15}-x_{1}^{-15}x_{2}^{2}x_{3}^{2}x_{5}^{11}-2x_{1}^{-13}x_{2}^{2}x_{3}^{2}x_{5}^{9}-2x_{1}^{-11}x_{2}^{2}x_{3}^{2}x_{5}^{7}-x_{1}^{-9}x_{2}^{2}x_{3}^{2}x_{5}^{5}$
    \end{itemize}
\end{example}

\section*{Acknowledgments} 
The authors wish to extend their heartfelt gratitude to Sam Nelson for his invaluable comments and for his thoughtful responses to our questions regarding key concepts in this study.

\bibliographystyle{plain}
\bibliography{bibliography}

\appendix

\section{The quantum enhancement polynomials for torus links $T(2, q)$.}
\begin{proof}[Proof of Proposition \ref{prop: torus_link}]
 Let $D$ be the standard diagram of $T(2, q)$. 
 Let  $s : \mathcal{C}(D) \to \{A, B\}$ be a state of $D$, and for each state $s$, 
let $k_s$ denote the number of circles in the diagram 
obtained by smoothing each crossing. 
We denote the set of states of $D$ by $\mathcal{S}(D)$. 
    
Suppose first that $q$ is odd. In this case, independent of the choice of orientation of $T(2, q)$, we have
    \[
        k_s = 
        \begin{cases}
            |q| - |s^{-1}(A)| & \text{if } |s^{-1}(A)| \neq |q|, \\
            2 & \text{if } |s^{-1}(A)| = |q|,
        \end{cases}
    \]
and for each $i \, (0 \leq i \leq |q|)$, the following holds: 
\[
    \left|\left\{s \in \mathcal{S}(D) \mid |s^{-1}(A)| = i \right\}\right| = \binom{|q|}{i}.
\]
Each $X_2$-coloring of $D$ can be obtained with two colors $a, b \in X_2$, as shown in Figure~\ref{fig:torus_knot_col}; denote the coloring by $C_{a,b}$. Then, for each $a, b \in X_2$, we have: 
\[
\beta^{(A, B)}_{X_2}(D, C_{a,b}) = w^{-q} \sum_{s \in \mathcal{S}(D)} \delta^{k_s} \prod_{c \in \mathcal{C}(D)} s(c)(a,b,b)^n,
\]
which simplifies to:
\[
    \beta^{(A, B)}_{X_2}(D, C_{a,b}) = w^{-q} \left( A_{a,b,b}^{n|q|}\delta^2 + \sum_{i=1}^{|q|} \binom{|q|}{i} \delta^{|q|-i} A_{a,b,b}^{ni} B_{a,b,b}^{n(|q|-i)} \right),
\]
\[
    = w^{-q}\left(A_{a,b,b}^{n|q|}\delta^2 + (A_{a,b,b}^n + B_{a,b,b}^n\delta)^{|q|} - A_{a,b,b}^{n|q|}\right).
\]
Thus, the assertion holds when $q$ is odd.

\begin{figure}[htbp]
    \centering
    \includegraphics[width=1.2\textwidth]{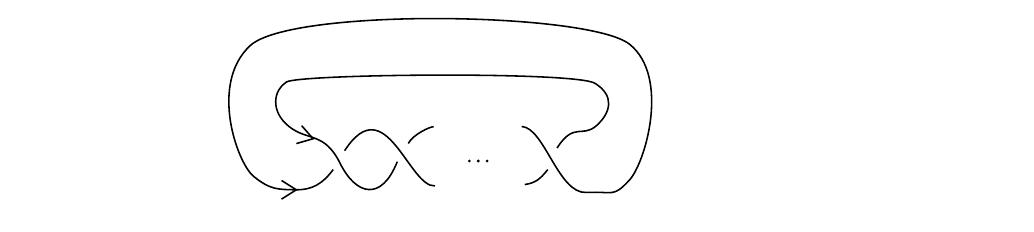}
    \begin{picture}(400,0)(0,0)
        \put(169,46){$b$}
        \put(200,46){$b$}
        \put(239,46){$b$}
        \put(180,20){$a$}
        \put(208,100){$b$}
        \put(195,75){$[a, b, b]$}
    \end{picture}
    \caption{An $X_2$-coloring of the standard diagram of $T(2, q)$, where $q$ is odd.}
    \label{fig:torus_knot_col}
\end{figure}
    
Next, suppose that $q$ is even. Consider first the case where $T(2, q)$ is oriented so that $nq \geq 0$. In this case, we have:
    \[
        k_s = 
        \begin{cases}
            |q| - |s^{-1}(A)| & \text{if } |s^{-1}(A)| \neq |q|, \\
            2 & \text{if } |s^{-1}(A)| = |q|,
        \end{cases}
    \]
and for each $i \, (0 \leq i \leq |q|)$, the following holds:
\[
    \left|\left\{s \in \mathcal{S}(D) \mid |s^{-1}(A)| = i \right\}\right| = \binom{|q|}{i}.
\]
Each $X_2$-coloring of $D$ can be obtained with three colors $a, b, c \in X_2$, as shown in Figure~\ref{fig:torus_link_col_1}; denote the coloring by $C_{a,b,c}$. Then, for each $a, b, c \in X_2$, we have:
\[
\beta^{(A, B)}_{X_2}(D, C_{a,b,c}) = w^{-q} \sum_{s \in \mathcal{S}(D)} \delta^{k_s} \prod_{c \in \mathcal{C}(D)} s(c)(a,b,c)^n,
\]
which simplifies to:
\[
    \beta^{(A, B)}_{X_2}(D, C_{a,b,c}) = w^{-q} \left( A_{a,b,c}^{n\frac{|q|}{2}} A_{a,c,b}^{n\frac{|q|}{2}}\delta^2 + \sum_{i=1}^{\frac{|q|}{2}} \sum_{j=1}^{\frac{|q|}{2}} \binom{\frac{|q|}{2}}{i} \binom{\frac{|q|}{2}}{j} A_{a,b,c}^{ni} A_{a,c,b}^{nj} B_{a,b,c}^{n(\frac{|q|}{2}-i)} B_{a,c,b}^{n(\frac{|q|}{2}-j)}\delta^{|q|-(i+j)} \right),
\]
\[
    = w^{-q} \left( A_{a,b,c}^{n\frac{|q|}{2}} A_{a,c,b}^{n\frac{|q|}{2}}\delta^2 + (A_{a,b,c}^n + B_{a,b,c}^n\delta)^{\frac{|q|}{2}}(A_{a,c,b}^n + B_{a,c,b}^n\delta)^{\frac{|q|}{2}} - A_{a,b,c}^{\frac{|q|}{2}} A_{a,c,b}^{\frac{|q|}{2}} \right).
\]

\begin{figure}[htbp]
    \centering
    \includegraphics[width=1.2\textwidth]{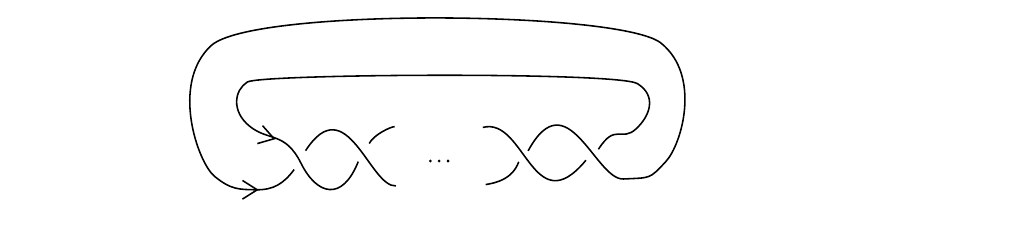}
    \begin{picture}(400,0)(0,0)
        \put(149,46){$c$}
        \put(178,46){$b$}
        \put(231,46){$b$}
        \put(260,46){$c$}
        \put(162,20){$a$}
        \put(202,100){$b$}
        \put(165,75){$[a, b, c] = [a, c, b]$}
    \end{picture}
    \caption{An $X_2$-coloring of the standard diagram of $T(2, q)$, where $q$ is even and $nq \geq 0$.} 
    \label{fig:torus_link_col_1}
\end{figure}

Finally, consider the case where $T(2, q)$ is oriented so that $nq < 0$. In this case, we have:
    \[
        k_s = 
        \begin{cases}
            |q| - |s^{-1}(B)| & \text{if } |s^{-1}(B)| \neq |q|, \\
            2 & \text{if } |s^{-1}(B)| = |q|,
        \end{cases}
    \]
and for each $i \, (0 \leq i \leq |q|)$, the following holds:
\[
    \left|\left\{ s \in \mathcal{S}(D) \mid |s^{-1}(B)| = i \right\}\right| = \binom{|q|}{i}.
\]
Each $X_2$-coloring of $D$ can be obtained with three colors $a, b, c \in X_2$, as shown in Figure~\ref{fig:torus_link_col_2}; denote the coloring by $C_{a,b,c}$. Then, for each $a, b, c \in X_2$, we have:
    \begin{align*}
       \beta^{(A, B)}_{X_2}(D, C_{a,b,c}) &= w^{q} \sum_{s \in \mathcal{S}(D)} \delta^{k_s} \prod_{c \in \mathcal{C}(D)} s(c)(a,b,c)^n \\
                                     &= w^{q} \left( B_{a,b,c}^{n\frac{|q|}{2}} B_{a,c,b}^{n\frac{|q|}{2}} \delta^2 + \sum_{i=1}^{\frac{|q|}{2}} \sum_{j=1}^{\frac{|q|}{2}} \binom{\frac{|q|}{2}}{i} \binom{\frac{|q|}{2}}{j} B_{a,b,c}^{ni} B_{a,c,b}^{nj} A_{a,b,c}^{n(\frac{|q|}{2}-i)} A_{a,c,b}^{n(\frac{|q|}{2}-j)} \delta^{|q|-(i+j)} \right) \\
                                     &= w^{q} \left( B_{a,b,c}^{n\frac{|q|}{2}} B_{a,c,b}^{n\frac{|q|}{2}} \delta^2 + (B_{a,b,c}^n + A_{a,b,c}^n \delta)^{\frac{|q|}{2}} (B_{a,c,b}^n + A_{a,c,b}^n \delta)^{\frac{|q|}{2}} - B_{a,b,c}^{\frac{|q|}{2}} B_{a,c,b}^{\frac{|q|}{2}} \right).
    \end{align*}

    \begin{figure}[htbp]
        \centering
        \includegraphics[width=1.2\textwidth]{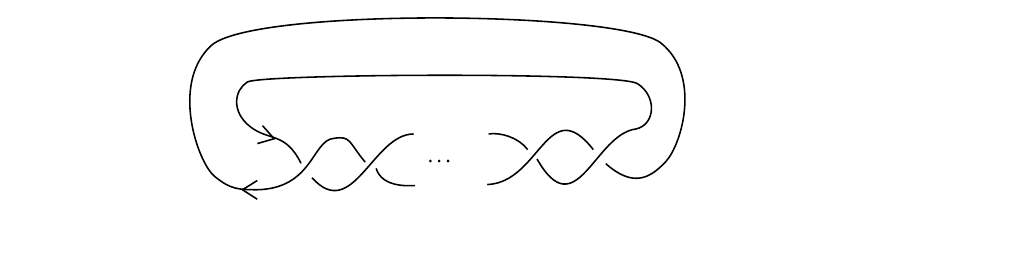}
        \begin{picture}(400,0)(0,0)
            \put(152,56){$c$}
            \put(181,56){$b$}
            \put(234,56){$b$}
            \put(263,56){$c$}

            \put(162,20){$[a, b, c] = [a, c, b]$}

            \put(202,110){$b$}
            \put(202,85){$a$}
        \end{picture}
        \caption{An $X_2$-coloring of the standard diagram of $T(2, q)$, where $q$ is even and $nq < 0$.}
        \label{fig:torus_link_col_2}
    \end{figure}
\end{proof}

\section{The quantum enhancement polynomials for twist knots $\mathit{TW}(q)$}
\begin{proof}[Proof of Proposition \ref{prop: twist_knot}]
    Let $D$ be the standard diagram of $\mathit{TW}(q)$. 
    Let $s : \mathcal{C}(D) \to \{A, B\}$ be a state of $D$, and for each state $s$, 
    let $k_s$ denote the number of circles in the diagram 
    obtained by smoothing each crossing. 
    We denote the set of states of $D$ by $\mathcal{S}(D)$. 

    Regardless of the choice of $q$ and orientation, each $X_2$-coloring of $D$ 
    can be obtained with three colors $a, b \in X_2$, as shown in Figure~\ref{fig:twist_knot_col}; we denote the coloring by $C_{a,b}$. 
    \begin{figure}[htpb]
        \centering
        \includegraphics[width=1.2\textwidth]{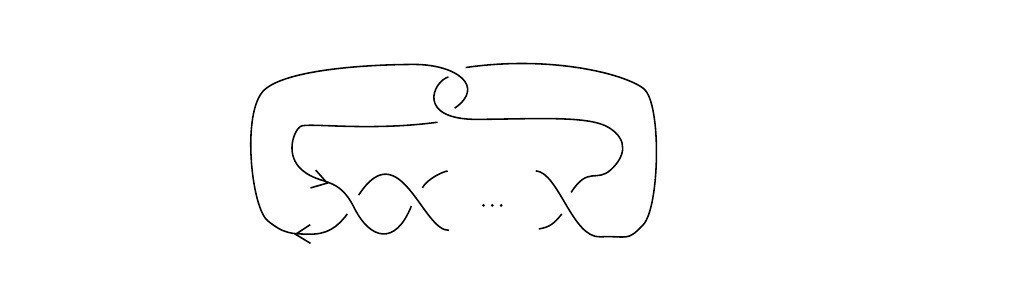}
        \begin{picture}(400,0)(0,0)

            \put(175,56){$a$}
            \put(204,56){$a$}
            \put(248,56){$a$}

            \put(190,20){$b$}
            \put(190,75){$b$}

            \put(208,107){$b$}
            \put(168,107){$a$}
            \put(248,107){$a$}

            \put(205,128){$c_1$}
            \put(205,89){$c_2$}

        \end{picture}
        \caption{An $X_2$-coloring of the standard diagram of $\mathit{TW}(q)$.}
        \label{fig:twist_knot_col}
    \end{figure}
    
    We will discuss the case where $q$ is even, but the same argument applies to the case where $q$ is odd. 
    Let $c_1, c_2$ be the crossings of $D$ shown in Figure~\ref{fig:twist_knot_col}. 
    For each state $s \in \mathcal{S}(D)$, set $S_1 = s(c_1)$ and $S_2 = s(c_2)$. 
    Then, we define $\beta^{(A, B)}_{X_2}(D, C_{a, b}, S_1, S_2)$ by 
    \[
        \beta^{(A, B)}_{X_2}(D, C_{a, b}, S_1, S_2) = w^{n(q+2)} \sum_{s \in \mathcal{S}(D)} \delta^{k_s} \prod_{c \in \mathcal{C}(D) \setminus \{c_1, c_2\}} s(c)(a,b,b)^n S_1(a,b,b)^n S_2(a,b,b)^n.
    \]
    Note that 
    \[
    \beta^{(A, B)}_{X_2}(D, C_{a, b}) = \beta^{(A, B)}_{X_2}(D, C_{a, b}, A, A) + \beta^{(A, B)}_{X_2}(D, C_{a, b}, A, B) + \beta^{(A, B)}_{X_2}(D, C_{a, b}, B, A) + \beta^{(A, B)}_{X_2}(D, C_{a, b}, B, B).
    \]
    
    For each state $s \in \mathcal{S}(D)$, set $m^s_A = \left|\left\{c \in \mathcal{C}(D) \setminus \{c_1, c_2\} \mid s(c) = A \right\}\right|$. 
    
    Consider first the case where $S_1 = A$ and $S_2 = A$.  
    In this case, we have
    \[
        k_s = 
        \begin{cases}
            m^s_A + 1 & \text{if } m^s_A \neq 0, \\
            3 & \text{if } |s^{-1}(A)| =  0, 
        \end{cases}
    \] 
    and therefore,
    \begin{align*}
         \beta^{(A, B)}_{X_2}(D, C_{a, b}, A, A) &= w^{n(q+2)} \sum_{s \in \mathcal{S}(D)} \delta^{k_{s}} \prod_{c \in \mathcal{C}(D) \setminus \{c_1, c_2\}} s(c)(a,b,b)^n A(a,b,b)^n A(a,b,b)^n \\
         &= w^{n(q+2)} \left(\sum_{i=0}^{|q|} \binom{|q|}{i} \delta^{i+1}A_{a,b,b}^{ni}B_{a,b,b}^{n(|q|-i)} - \delta B_{a,b,b}^{n|q|} + \delta^3 B_{a,b,b}^{n|q|}\right)A_{a,b,b}^{2n} \\
         &= w^{n(q+2)} \left((A_{a,b,b}^n\delta + B_{a,b,b}^n)^{|q|} + B_{a,b,b}^{n|q|}(\delta^2 - 1)\right)\delta A_{a,b,b}^{2n}.
    \end{align*}

    Next, consider the case where $S_1 = A$ and $S_2 = B$, or $S_1 = A$ and $S_2 = A$.  
    In this case, we have 
    \[
        k_s = 
        \begin{cases}
            m^s_A & \text{if } m^s_A \neq 0, \\
            2 & \text{if } |s^{-1}(A)| =  0, 
        \end{cases}
    \]
    and therefore, 
    \begin{align*}
         \beta^{(A, B)}_{X_2}(D, C_{a, b}, S_1, S_2) &= w^{n(q+2)} \sum_{s \in \mathcal{S}(D)} \delta^{k_s} \prod_{c \in \mathcal{C}(D) \setminus \{c_1, c_2\}} s(c)(a,b,b)^n S_1(a,b,b)^n S_2(a,b,b)^n \\
         &= w^{n(q+2)} \left(\sum_{i=0}^{|q|} \binom{|q|}{i} \delta^i A_{a,b,b}^{ni} B_{a,b,b}^{n(|q|-i)} - B_{a,b,b}^{n|q|} + \delta^2 B_{a,b,b}^{n|q|}\right) A_{a,b,b}^n B_{a,b,b}^n \\
         &= w^{n(q+2)} \left((A_{a,b,b}^n\delta + B_{a,b,b}^n)^{|q|} + B_{a,b,b}^{n|q|}(\delta^2 - 1)\right) A_{a,b,b}^n B_{a,b,b}^n.
    \end{align*}

    Finally, consider the case where $S_1 = B$ and $S_2 = B$. 
    In this case, $k_s = m^s_A + 1$. Thus, we have 
    \begin{align*}
         \beta^{(A, B)}_{X_2}(D, C_{a, b}, B, B) &= w^{n(q+2)} \sum_{s \in \mathcal{S}(D)} \delta^{k_{s}} \prod_{c \in \mathcal{C}(D) \setminus \{c_1, c_2\}} s(c)(a,b,b)^n A(a,b,b)^n A(a,b,b)^n \\
         &= w^{n(q+2)} \sum_{i=0}^{|q|} \binom{|q|}{i} \delta^{i+1} A_{a,b,b}^{ni} B_{a,b,b}^{n(|q|-i)} B_{a,b,b}^{2n} \\
         &= w^{n(q+2)} \left(((A_{a,b,b}^n\delta + B_{a,b,b}^n)^{|q|} + B_{a,b,b}^{n|q|}(\delta^2 - 1))\delta B_{a,b,b}^{2n} - \delta B_{a,b,b}^{n(|q|+2)}\right).
    \end{align*}

    From the above calculations, we obtain 
    \begin{align*}
        \beta^{(A, B)}_{X_2}(D, C_{a, b}) = w^{n(q+2)} & \left(((A_{a,b,b}^n\delta + B_{a,b,b}^n)^{|q|} + B_{a,b,b}^{n|q|} (\delta^2 - 1)) \right. \\
        &\left. \cdot (\delta A_{a,b,b}^{2n} + 2A_{a,b,b}^n B_{a,b,b}^n + \delta B_{a,b,b}^{2n}) - \delta B_{a,b,b}^{n(|q|+2)}\right).
    \end{align*}
\end{proof}

\section{The quantum enhancement polynomials for Thistlethwaite's link}

Let $(A^{(i)}, B^{(i)})$ ($i \in \{1, \ldots, 5\}$) be the universal tribracket brackets with respect to $X_2$. 
Let $L$ be Thistlethwaite's link, as shown in Figure~\ref{fig:thistlethwaite}. 
Then, for each $i \in \{1,2,3,4,5\}$, $\Phi^{(A^{(i)}, B^{(i)})}_{X_2}(L)$ is computed as follows: 
\begin{itemize}
    \item $\Phi^{(A^{(1)}, B^{(1)})}_{X_2}(L) = \Phi^{(A^{(2)}, B^{(2)})}_{X_2}(L) = 4u^a + 4u^b + 4u^c + 4u^d$.
    \item $\Phi^{(A^{(3)}, B^{(3)})}_{X_2}(L) = \Phi^{(A^{(4)}, B^{(4)})}_{X_2}(L) = 4u^a + 2u^c + 2u^d + 2u^e + 2u^f + 2u^g + 2u^h$.
    \item $\Phi^{(A^{(5)}, B^{(5)})}_{X_2}(L) = 8u^a + 4u^i + 4u^j$,
\end{itemize}
where $a,b,c,d,e,f,g,h,i,j \in \mathbb{Z}[x_1,x_2,x_3,x_5]$ are defined as follows: 
\begin{itemize}
    \item $a = -x_1^{-3}x_5^3 - 3x_1^{-1}x_5 - 3x_1x_5^{-1} - x_1^3x_5^{-3}$
    \item $b = x_1^{-7}x_5^7 - x_1^{-3}x_5^3 - 3x_1^{-1}x_5 - 4x_1x_5^{-1} - x_1^3x_5^{-3} - x_1^5x_5^{-5} + x_1^{13}x_5^{-13}$
    \item $c = -x_1^{-5}x_2^2x_3^2x_5 - 2x_1^{-3}x_2^2x_3^2x_5^{-1} + 2x_1^{-1}x_2^2x_3^2x_5^{-3} - 2x_1x_2^2x_3^2x_5^{-5} - 2x_1^3x_2^2x_3^2x_5^{-7} - x_1^5x_2^2x_3^2x_5^{-9} + x_1^7x_2^2x_3^2x_5^{-11} - x_1^9x_2^2x_3^2x_5^{-13}$
    \item $d = -x_1^{-1}x_2^{-2}x_3^{-2}x_5^5 - 2x_1x_2^{-2}x_3^{-2}x_5^3 - 2x_1^3x_2^{-2}x_3^{-2}x_5 - 2x_1^5x_2^{-2}x_3^{-2}x_5^{-1} - 2x_1^7x_2^{-2}x_3^{-2}x_5^{-3} - x_1^9x_2^{-2}x_3^{-2}x_5^{-5} + x_1^{11}x_2^{-2}x_3^{-2}x_5^{-7} + x_1^{13}x_2^{-2}x_3^{-2}x_5^{-9}$
    \item $e = -x_1^{-17}x_5^{17} + x_1^{-15}x_5^{15} + 2x_1^{-13}x_5^{13} + x_1^{11}x_5^{-11} - 3x_1^7x_5^{-7} - 2x_1^{-5}x_5^5 - 2x_1^{-3}x_5^3 - 2x_1^{-1}x_5 - x_1x_5^{-1} - x_1^3x_5^{-3}$
    \item $f = -x_1^{-3}x_5^3 - x_1^{-1}x_5 - 2x_1x_5^{-1} - 2x_1^3x_5^{-3} - 2x_1^5x_5^{-5} - 3x_1^7x_5^{-7} + x_1^{11}x_5^{-11} + 2x_1^{13}x_5^{-13} + x_1^{15}x_5^{-15} - x_1^{17}x_5^{-17}$
    \item $g = x_1^{-25}x_2^2x_3^2x_5^{21} + x_1^{-23}x_2^2x_3^2x_5^{19} - x_1^{-21}x_2^2x_3^2x_5^{17} - 2x_1^{-19}x_2^2x_3^2x_5^{15} - 2x_1^{-17}x_2^2x_3^2x_5^{13} - 2x_1^{-15}x_2^2x_3^2x_5^{11} - 2x_1^{-13}x_2^2x_3^2x_5^9 - x_1^{-11}x_2^2x_3^2x_5^7$
    \item $h = x_1^3x_2^{-2}x_3^{-2}x_5 + x_1^5x_2^{-2}x_3^{-2}x_5^{-1} - x_1^7x_2^{-2}x_3^{-2}x_5^{-3} - 2x_1^9x_2^{-2}x_3^{-2}x_5^{-5} - 2x_1^{11}x_2^{-2}x_3^{-2}x_5^{-7} - 2x_1^{13}x_2^{-2}x_3^{-2}x_5^{-9} - 2x_1^{15}x_2^{-2}x_3^{-2}x_5^{-11} - x_1^{17}x_2^{-2}x_3^{-2}x_5^{-13}$
    \item $i = -x_1^{-7}x_2^2x_3^2x_5^3 - 3x_1^{-5}x_2^2x_3^2x_5 - 3x_1^{-3}x_2^2x_3^2x_5^{-1} - x_1^{-1}x_2^2x_3^2x_5^{-3}$
    \item $j = -x_1x_2^{-2}x_3^{-2}x_5^3 - 3x_1^3x_2^{-2}x_3^{-2}x_5 - 3x_1^5x_2^{-2}x_3^{-2}x_5^{-1} - x_1^7x_2^{-2}x_3^{-2}x_5^{-3}$
\end{itemize}

\end{document}